\numberwithin{equation}{section}
\newtheorem{theo}{Theorem}[section]
\newtheorem{conj}[theo]{Conjecture}
\newtheorem{prob}[theo]{Problem}
\newtheorem{corol}[theo]{Corollary}
\newtheorem{lemm}[theo]{Lemma}
\newtheorem{prop}[theo]{Proposition}
\theoremstyle{definition} 
\newtheorem{remark}[theo]{Remark}
\definecolor{gray1}{gray}{0.8}
\definecolor{gray2}{gray}{0.7}
\definecolor{gray3}{gray}{0.5}
\def\c{\cellcolor{gray1}}
\def\d{\cellcolor{gray3}}
\newcommand{\autotops}[1]{\mathrm{Atp}(#1)} 
\newcommand{\automorphs}[1]{\mathrm{Aut}(#1)} 
\def\id{{\rm id}}
\def\autm{\mathrm{Aut}}
\def\autt{\mathrm{Atp}}
\renewcommand{\geq}{\geqslant}
\renewcommand{\leq}{\leqslant}
\renewcommand{\ge}{\geqslant}
\renewcommand{\le}{\leqslant}
\providecommand{\nequiv}{\not\equiv}
\providecommand{\nin}{\notin}
\def\tdot{{\cdot}} 
\def\C{{\mathcal{C}}}
\def\I{{\mathcal{I}}}
\def\O{{\cal O}}
\def\S{{\cal S}}
\def\tref#1{Theorem~\ref{#1}}
\def\lref#1{Lemma~\ref{#1}}
\def\cref#1{Corollary~\ref{#1}}
\def\fref#1{Figure~\ref{#1}}
\def\eref#1{(\ref{#1})}
\def\half{\tfrac12}
\DeclareMathOperator{\lcm}{lcm}
\DeclareMathOperator{\per}{\text{\scshape{per}}}
\def\narrowcols{ \addtolength{\arraycolsep}{-1.5pt}\begin{footnotesize} }
\def\normalcols{ \end{footnotesize}\addtolength{\arraycolsep}{1.5pt} }
\def\verynarrowcols{ \addtolength{\arraycolsep}{-3pt}\renewcommand{\arraystretch}{0.8}\begin{scriptsize} }
\def\verynormalcols{ \end{scriptsize}\addtolength{\arraycolsep}{3pt}\renewcommand{\arraystretch}{1.25}}
\def\xx{\cdot\!}        
\def\xa{\star\!}            
\def\xb{\circ\!}        
\def\xc{\bullet\!}      
\def\xi{\!\!\!\infty\!\!\!\!}       
\newcounter{xxcounter}  
\def\xxx#1{ 
    \setcounter{xxcounter}{0}
    \whiledo{\value{xxcounter}<#1}{
        \stepcounter{xxcounter}
        \xx
        \ifthenelse{\value{xxcounter}<#1}{&}{}
    }
}
\def\tone{\star}
\def\ttwo{\circ}
\def\tthree{\bullet}
\def\tinfty{\infty}
\newcommand{\offset}[2]{\mathcal{O}_{#1,#2}}
\def\tspacer{{\vrule height 2.75ex width 0ex depth0ex}}
\def\bspacer{{\vrule height 0ex width 0ex depth1.2ex}}
\begin{document}

\title{Cycle structure of autotopisms of\\
quasigroups and Latin squares\thanks{Research supported by ARC grants DP0662946 and DP1093320.  P.~Vojt\v{e}chovsk\'y thanks Monash University for hospitality and financial support during his sabbatical stay.}}

\author{Douglas S.\ Stones$^{1,2}$, Petr Vojt\v{e}chovsk\'{y}$^3$ and Ian M.\ Wanless$^1$ \\
\small $^1$ School of Mathematical Sciences and\\[-0.75ex]
\small $^2$ Clayton School of Information Technology\\[-0.75ex]
\small Monash University\\[-0.75ex]
\small VIC 3800 Australia\\
\small {\tt the\_empty\_element@yahoo.com} (D.\,S.\,Stones) and {\tt ian.wanless@monash.edu} (I.\,M.\,Wanless)\\
\small $^3$ Department of Mathematics\\[-0.75ex]
\small University of Denver\\[-0.75ex]
\small 2360 S Gaylord St, Denver, CO 80208, USA\\
\small {\tt  petr@math.du.edu}
}
\date{}

\maketitle

\begin{abstract}
An autotopism of a Latin square is a triple $(\alpha,\beta,\gamma)$ of
permutations such that the Latin square is mapped to itself by
permuting its rows by $\alpha$, columns by $\beta$, and symbols by
$\gamma$. Let $\autotops{n}$ be the set of all autotopisms of Latin
squares of order $n$.  Whether a triple $(\alpha,\beta,\gamma)$ of
permutations belongs to $\autotops{n}$ depends only on the cycle
structures of $\alpha$, $\beta$ and $\gamma$.  We establish a number
of necessary conditions for $(\alpha,\beta,\gamma)$ to be in
$\autotops{n}$, and use them to determine $\autotops{n}$ for $n\le17$.
For general $n$ we determine if $(\alpha,\alpha,\alpha)\in\autotops{n}$
(that is, if $\alpha$ is an automorphism of some quasigroup of order
$n$), provided that either $\alpha$ has at most three cycles other
than fixed points or that the non-fixed points of $\alpha$ are in
cycles of the same length.
\end{abstract}

\begin{center}
{\it AMS Subject Classification}: 05B15, 20N05, 20D45\\
{\it Keywords}: automorphism; autotopism; cycle structure; diagonally cyclic Latin square; Latin square; quasigroup.
\end{center}


\section{Introduction}\label{SEIntro}

A \emph{Latin square} of order $n$ is an $n \times n$ array $L=L(i,j)$
of $n$ symbols such that the symbols in every row and in every column
are distinct.  We will usually index the rows and columns of $L$ by
elements of $[n]=\{1,2,\ldots,n\}$ and take the symbol set to be
$[n]$. A \emph{quasigroup} $Q$ is a nonempty set with one binary
operation such that for every $a$, $b\in Q$ there is a unique $x\in Q$
and a unique $y\in Q$ satisfying $ax=b$, $ya=b$. Since multiplication
tables of finite quasigroups are precisely Latin squares, all results
obtained in this paper for Latin squares can be interpreted in the
setting of finite quasigroups.

Let $\I_n=S_n \times S_n \times S_n$, where $S_n$ is the symmetric
group acting on $[n]$.  Then $\I_n$ acts on the set of Latin squares
indexed by $[n]$ as follows: For each $\theta=(\alpha,\beta,\gamma)
\in \I_n$ we define $\theta(L)$ to be the Latin square formed from $L$
by permuting the rows according to $\alpha$, permuting the columns
according to $\beta$, and permuting the symbols according to
$\gamma$. More precisely, $\theta(L)=L'$ is the Latin square defined by
\begin{equation}\label{EQFormula}
L'(i,j)=\gamma\big(L(\alpha^{-1}(i),\beta^{-1}(j))\big).
\end{equation}

The elements $\theta$ of $\I_n$ are called \emph{isotopisms}, and the
Latin squares $L$ and $\theta(L)$ are said to be \emph{isotopic}. If
$\theta\in \I_n$ is of the form $\theta=(\alpha,\alpha,\alpha)$, then
$\alpha$ is an \emph{isomorphism}.

If $\theta\in \I_n$ satisfies $\theta(L)=L$, then $\theta$ is an
\emph{autotopism} of $L$.  By \eqref{EQFormula}, $\theta\in\I_n$ is an
autotopism of $L$ if and only if
\begin{equation}\label{EQAutotopism}
\gamma\big(L(i,j)\big)=L\big(\alpha(i),\beta(j)\big)
\end{equation}
for all $i,j \in [n]$.  We use $\id$ to denote the identity
permutation, and we call $(\id,\id,\id) \in \I_n$ the \emph{trivial}
autotopism. The group of all autotopisms of $L$ will be denoted by
$\autt(L)$.

We will be particularly interested in the case where
$(\alpha,\alpha,\alpha)\in \autt(L)$, when we call $\alpha$ an
\emph{automorphism} of $L$.  The group of all automorphisms of $L$
will be denoted by $\autm(L)$.

Autotopisms and automorphisms are natural classes of symmetries of
Latin squares and quasigroups, motivating the question
\begin{equation}
    \textit{``Which isotopisms are autotopisms of Latin squares?''}\label{Eq:Q}\tag{Q}
\end{equation}
and also its specialization ``Which isomorphisms are automorphisms of
Latin squares?''. In this paper we give a partial answer to these
questions.

\subsection{Overview}

Let $n \geq 1$. For $\theta\in \I_n$, let $\Delta(\theta)$ be the
number of Latin squares $L$ of order $n$ for which
$\theta\in\autt(L)$. Let
$\autotops{n}=\{\theta\in\I_n:\Delta(\theta)>0\}$ and
$\automorphs{n}=\{\alpha \in S_n:(\alpha,\alpha,\alpha)\in\autotops{n}\}$.
Hence \eqref{Eq:Q} can be rephrased as ``What is $\autotops{n}$?''

We show in Section~\ref{Sc:Equivalence} that the value of
$\Delta(\theta)$ depends only on the cycle structures of the
components $\alpha$, $\beta$ and $\gamma$ of
$\theta=(\alpha,\beta,\gamma)$.  In Section~\ref{ScAutt} we establish
several necessary conditions for an isotopism to be an
autotopism. These conditions go a long way toward describing
$\autotops{n}$ for all $n\le 17$, with only a few \emph{ad hoc}
computations needed.

To demonstrate that $\theta \in \autotops{n}$, it is usually necessary
to give an explicit construction of a Latin square $L$ with
$\theta\in\autt(L)$.  In Section~\ref{Sc:Contours} we present two
visual tools, called block diagrams and contours, that allow us to
describe the required Latin squares without impenetrable
notation. Additionally, more specialized means of constructing
contours of Latin squares are given in 
Section~\ref{Sc:AdditionalBlockPatterns}.

We call a cycle of a permutation \emph{nontrivial} if it has length
greater than one. In \tref{THEqualCycLen} we characterize all
automorphisms whose nontrivial cycles are of the same length. In
\tref{THTwoCycles} we characterize all automorphisms that contain
precisely two nontrivial cycles. In \tref{THThreeCyc} we characterize
all automorphisms that contain precisely three nontrivial cycles.

Our computational results are summarized in 
Section~\ref{Sc:Computational} and Appendix A, where we determine
$\autotops{n}$ for $12\le n\le 17$.  Combined with the previous
results of Falc\'on \cite{Falcon2009} (which we verify), this
identifies $\autotops{n}$ for all $n\le 17$.

Open problems and conjectures are presented in the final section.

\subsection{Motivation and literature review}

For a notion as pervasive as symmetry it is infeasible to survey all
the relevant results within the vast literature on Latin squares and
quasigroups. The problem is exacerbated by the fact that results may
be proved about symmetries of other objects that implicitly imply
results about symmetries of Latin squares. For instance, Colbourn and
Rosa \cite[Section 7.4]{CoRo} asked which permutations are
automorphisms of Steiner triple systems, hence addressing our question
\eqref{Eq:Q} for Steiner quasigroups (i.e.\ Latin squares that are
idempotent and totally symmetric). To give another example,
autotopisms of Latin squares inherited from one-factorizations of
graphs were studied in \cite{WI}.

There are quite a few recent results on Latin squares where
understanding of autotopisms has been critical. In
\cite{BrowningStonesWanless,McKayWanless2005,StonesWanless2009c,SW,StonesWanless2009b},
autotopisms were used to establish congruences that the number of
Latin squares of given order must satisfy (see also
\cite{Sto10}). Similar ideas were used by Drisko~\cite{drisko2} to
prove a special case of the Alon-Tarsi Conjecture (see also
\cite{StonesWanless2009d}). It was shown in \cite{McKayWanless2005}
that the autotopism group of almost all Latin squares is trivial,
thereby revealing that the asymptotic ratio of the number of Latin
squares to the number of isotopism classes of Latin squares of order
$n$ is $(n!)^3$. Imposing a large autotopism group can make it
feasible to look for Latin squares with desirable properties in search
spaces that would otherwise be too large \cite{cyclatom}, and also to
show that certain properties hold in a Latin square
\cite{ninf,cyclatom}.  Ganfornina
\cite{Ganfornina2006b,Ganfornina2006} suggested using Latin squares
that admit certain autotopisms for secret sharing schemes.  During the
course of resolving the existence question for near-automorphisms,
\cite{CavenaghStones2010a} classified when a Latin rectangle completes
to a Latin square that admits an autotopism with a trivial first
component.

There are also many results concerning autotopisms of quasigroups and
\emph{loops}, that is, quasigroups with a neutral element. In loop
theory, autotopisms have been useful in the study of specific
varieties of loops, particularly those in which the defining
identities can be expressed autotopically. For example, a loop is
\emph{Moufang} \cite{Moufang} if it satisfies the identity $(xy)(zx) =
x((yz)x)$. This is equivalent to the assertion that for each $x$ the
triple $(L_x, R_x, L_x R_x)$ is an autotopism, where $L_x(y) = xy$ and
$R_x(y) = yx$ for all $y$. Thus Moufang loops can be studied by
considering these and other autotopisms \cite[Chapter V]{Bruck}, a
point of view that culminates in the theory of groups with triality
\cite{Doro}. Other varieties of loops in which the defining identities
have autotopic characterizations include conjugacy closed loops
\cite{GR}, extra loops \cite{KK}, and Buchsteiner loops \cite{CDK}. A
new, systematic look at the basic theory of loops defined in this way
can be found in \cite{DJ}. Automorphisms have not played quite the
same role in loop theory as they do in group theory, primarily due to
the fact that inner mappings (stabilizers of the neutral element in
the permutation group generated by all $L_x$ and $R_x$) are generally
not automorphisms. Worth mentioning is the study of loops with
transitive automorphism groups \cite{Dr,Drisko1997b}, and of loops in
which every inner mapping is actually an automorphism
\cite{BP,JKV,JKV2}.

We conclude the literature review with a summary of some results
specifically concerning \eqref{Eq:Q}. The first result was obtained by
Euler~\cite{Euler1782} in 1782.  He answered \eqref{Eq:Q} when
$\alpha$, $\beta$ and $\gamma$ are all $n$-cycles.  This was
generalized by Wanless \cite{Wanless2004} in 2004, who answered
\eqref{Eq:Q} for isomorphisms containing a single nontrivial cycle.
Bryant, Buchanan and Wanless \cite{BryantBuchananWanless2009} later
extended the results in \cite{Wanless2004} to include quasigroups with
additional properties, such as semisymmetry or idempotency.

In 1968, Sade \cite{Sade1968} answered \eqref{Eq:Q} for an isotopism
$\theta$ with a trivial component; a condition that was rediscovered
in \cite{Ganfornina2006,Laywine1981}.  Actually, these papers proved
only the necessity of the condition, but the sufficiency is easy to
show, cf.\ Theorem \ref{THOneFixedPerm}.

In 2007, McKay, Meynert and Myrvold~\cite{McKayMeynertMyrvold2007}
derived an important necessary condition for $\theta\in\I_n$ to belong
to $\autotops{n}$ (see Theorem~\ref{TMMM}) in the course of
enumerating quasigroups and loops up to isomorphism for orders $\le10$.
Recently, Hulpke, Kaski and \"Osterg{\aa}rd
\cite{HulpkeKaskiOstergard} gave a detailed account of the symmetries
of Latin squares of order $11$.

McKay, Meynert and Myrvold \cite{McKayMeynertMyrvold2007} also
identified graphs whose automorphism groups are isomorphic to
$\autt(L)$ and $\autm(L)$. This enabled them to use the
graph isomorphism software \verb!nauty! \cite{nauty} to efficiently
calculate the autotopism groups of Latin squares.  A different
procedure for finding the automorphism group of $L$, based on
equational invariants, was implemented in the \verb!LOOPS!
\cite{LOOPS,NagyVojtechovsky2007} package for \verb!GAP! \cite{GAP}.

Also in 2007, Falc\'{o}n and Mart\'{i}n-Morales
\cite{FalconMartinMorales2007} gave the nonzero values of
$\Delta(\theta)$ for all $\theta \in \I_n$ with $n\le 7$.  Later,
Falc\'on \cite{Falcon2009} determined $\autotops{n}$ for all $n\le
11$, and he gave several results of general nature.


Kerby and Smith~\cite{KerbySmith2009,KerbySmith2010} considered
\eqref{Eq:Q} for isomorphisms from an algebraic point of view.  The
divisors of $\Delta(\theta)$, for isomorphisms $\theta$, were
discussed in \cite{Stones2009d} and were used to determine the parity
of the number of quasigroups for small orders.

\section{Cycle structure}\label{Sc:Equivalence}

We begin by identifying an equivalence relation on isotopisms that
preserves the value of $\Delta$.  Given a Latin square $L=L(i,j)$ of
order $n$ we can construct a set of $n^2$ ordered
triples \[O(L)=\big\{\big(i,j,L(i,j)\big):i,j \in [n]\big\}\] called
the \emph{orthogonal array representation} of $L$. We will call the
elements of $O(L)$ \emph{entries} of $L$.  Conversely, if $O$ is a set
of $n^2$ triples $\big(i,j,L(i,j)\big)\in [n] \times [n] \times [n]$
such that distinct triples differ in at least two coordinates, then
$O$ gives rise to a Latin square of order $n$.

The symmetric group $S_3$ has a natural action on $O(L)$. If
$\lambda\in S_3$, then $O(L)^\lambda$ is obtained from $O(L)$ by
permuting the coordinates of all entries of $O(L)$ by $\lambda$. The
Latin square $L^\lambda$ induced by $O(L)^\lambda$ is called a
\emph{parastrophe} of $L$.

The group $S_3$ also acts on $\I_n = S_n\times S_n\times S_n$ by
permuting the coordinates of $\I_n$. Given $\theta\in\I_n$ and
$\lambda\in S_3$, we denote the resulting isotopism by
$\theta^\lambda$.

\begin{lemm}\label{LMAutConj}
Let $\lambda\in S_3$, let $\theta,\varphi\in \I_n$, and let $L$ be a Latin square of order $n$. Then
\begin{enumerate}
  \item[(i)] $\theta\in\autt(L)$ if and only if $\varphi\theta\varphi^{-1}\in\autt(\varphi(L))$,
  \item[(ii)] $\theta\in\autt(L)$ if and only if $\theta^\lambda\in\autt(L^\lambda)$.
\end{enumerate}
\end{lemm}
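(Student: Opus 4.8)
The plan is to treat both statements as instances of two structural facts: for (i), that $\theta\mapsto\theta(\cdot)$ is a genuine left action of $\I_n$ on Latin squares; for (ii), that being an autotopism is the same as being a setwise stabilizer in the orthogonal array picture.

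For part (i), I would first verify that the assignment $\theta\mapsto\theta(\cdot)$ is a group action, i.e.\ that $(\varphi\theta)(L)=\varphi(\theta(L))$ and $\id(L)=L$. Writing $\theta=(\alpha,\beta,\gamma)$ and $\varphi=(\alpha',\beta',\gamma')$, so that $\varphi\theta=(\alpha'\alpha,\beta'\beta,\gamma'\gamma)$, both sides of the composition law evaluate at the cell $(i,j)$ via \eqref{EQFormula} to $\gamma'\gamma\big(L(\alpha^{-1}\alpha'^{-1}(i),\beta^{-1}\beta'^{-1}(j))\big)$, using $(\alpha'\alpha)^{-1}=\alpha^{-1}\alpha'^{-1}$ and similarly for the other components; that $\id$ acts trivially is immediate from \eqref{EQFormula}. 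Granting this, the equivalence is purely formal: $\theta\in\autt(L)$ means $\theta(L)=L$, whence $\varphi\theta\varphi^{-1}\big(\varphi(L)\big)=\varphi\theta(L)=\varphi(L)$, so $\varphi\theta\varphi^{-1}\in\autt(\varphi(L))$; conversely, since $\varphi$ acts invertibly, the identity $\varphi\theta\varphi^{-1}\big(\varphi(L)\big)=\varphi(L)$ forces $\theta(L)=L$.

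For part (ii), the key is to recast \eqref{EQAutotopism} in the orthogonal array. I claim that $\theta=(\alpha,\beta,\gamma)\in\autt(L)$ if and only if the coordinatewise map $T_\theta\colon(a,b,c)\mapsto(\alpha(a),\beta(b),\gamma(c))$ fixes $O(L)$ setwise. Indeed, \eqref{EQAutotopism} asserts exactly that $\gamma(L(i,j))=L(\alpha(i),\beta(j))$, which says that $T_\theta$ sends each entry $(i,j,L(i,j))$ to another entry of $L$; since $T_\theta$ is a bijection of $[n]^3$, this is equivalent to $T_\theta(O(L))=O(L)$. Next I would record the compatibility between the two $S_3$-actions: if $P_\lambda$ denotes the coordinate permutation of triples defining $O(L)^\lambda=P_\lambda(O(L))$, then permuting the components of $\theta$ by $\lambda$ amounts to conjugation, $T_{\theta^\lambda}=P_\lambda\,T_\theta\,P_\lambda^{-1}$. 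Using $O(L^\lambda)=P_\lambda(O(L))$, we then obtain $\theta^\lambda\in\autt(L^\lambda)\iff T_{\theta^\lambda}\big(P_\lambda(O(L))\big)=P_\lambda(O(L))\iff P_\lambda\,T_\theta(O(L))=P_\lambda(O(L))\iff T_\theta(O(L))=O(L)\iff\theta\in\autt(L)$, the middle steps canceling $P_\lambda^{-1}P_\lambda$ and using that $P_\lambda$ is a bijection.

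The main obstacle is bookkeeping rather than mathematics: one must fix compatible conventions for how $\lambda$ permutes the coordinates of a triple and the components of $\theta$ so that the conjugation identity $T_{\theta^\lambda}=P_\lambda\,T_\theta\,P_\lambda^{-1}$ holds exactly, and one must keep the inverses in \eqref{EQFormula} consistent throughout. Once those conventions are pinned down, both equivalences collapse to the two algebraic facts established above.
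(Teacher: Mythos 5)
Your proof is correct and takes essentially the same route as the paper: for part (i) the paper gives precisely your formal chain of equivalences $\varphi\theta\varphi^{-1}\big(\varphi(L)\big)=\varphi(L)\iff\varphi(\theta(L))=\varphi(L)\iff\theta(L)=L$, relying implicitly on the group-action property that you verify explicitly from \eqref{EQFormula}. For part (ii), which the paper dismisses with ``the second claim is even more straightforward,'' your orthogonal-array argument via the conjugation identity $T_{\theta^\lambda}=P_\lambda\,T_\theta\,P_\lambda^{-1}$ is exactly the intended justification, since parastrophes are defined through the action of $S_3$ on $O(L)$.
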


\begin{proof}
To prove the first claim, observe that the following conditions are
equivalent: $\varphi\theta\varphi^{-1}\in\autt(\varphi(L))$,
$\varphi\theta\varphi^{-1}\varphi(L)=\varphi(L)$,
$\varphi\theta(L)=\varphi(L)$, $\theta(L)=L$, $\theta \in
\autt(L)$. The second claim is even more straightforward.
\end{proof}

Every $\alpha\in S_n$ decomposes into a product of disjoint cycles,
where we consider fixed points to be cycles of length $1$.  We say
$\alpha$ has the \emph{cycle structure} $c_1^{\lambda_1}\tdot
c_2^{\lambda_2}\cdots c_m^{\lambda_m}$ if $c_1>c_2>\cdots>c_m\ge 1$
and there are $\lambda_i$ cycles of length $c_i$ in the unique cycle
decomposition of $\alpha$.  Hence $c_1\lambda_1+c_2\lambda_2+\cdots
+c_m\lambda_m=n$. If $\lambda_i=1$, we usually write $c_i$ instead of
$c_i^1$ in the cycle structure.

We define the \emph{cycle structure} of
$\theta=(\alpha,\beta,\gamma)\in\I_n$ to be the ordered triple of
cycle structures of $\alpha$, $\beta$ and $\gamma$.

Since two permutations in $S_n$ are conjugate if and only if they have
the same cycle structure \cite[p.~25]{Cameron1999}, we deduce from
Lemma~\ref{LMAutConj} that the value of $\Delta(\theta)$ depends only
on the (unordered) cycle structure of $\theta$. In particular, if
$\alpha$, $\beta$ and $\gamma$ have the same cycle structures then
$\Delta((\alpha,\beta,\gamma))=\Delta((\alpha,\alpha,\alpha))$.

We say that a permutation in $S_n$ is \emph{canonical} if (i) it is
written as a product of disjoint cycles, including $1$-cycles
corresponding to fixed points, (ii) the cycles are ordered according
to their length, starting with the longest cycles, (iii) each
$c$-cycle is of the form $(i,i+1,\ldots,i+c-1)$, with $i$ being
referred to as the \emph{leading symbol} of the cycle, and (iv) if a
cycle with leading symbol $i$ is followed by a cycle with leading
symbol $j$, then $i<j$.  The purpose of this definition is to
establish a unique way of writing a representative permutation with a
given cycle structure.  For instance, if we consider permutations with
the cycle structure $3 \tdot 2 \tdot 1^2$, then $(123)(45)(6)(7)\in
S_7$ is canonical, whereas $(357)(41)(2)(6)$, $(132)(45)(6)(7)$ and
$(123)(45)(7)(6)$ are not.

By Lemma~\ref{LMAutConj}, while studying the value of
$\Delta((\alpha,\beta,\gamma))$, we may assume that the permutations
$\alpha$, $\beta$ and $\gamma$ are canonical.

Finally, we deduce from Lemma~\ref{LMAutConj} that for
$\varphi\in\I_n$ the autotopism groups $\autt(L)$ and
$\autt(\varphi(L))$ are conjugate in $\I_n$, and thus they are
isomorphic.  We will therefore study isotopisms modulo the equivalence
induced by conjugation and parastrophy.

\section{Conditions on isotopisms to be autotopisms}\label{ScAutt}

In this section we review and extend some important conditions for
membership in $\autotops{n}$.

\subsection{Previously known conditions}

The following two lemmas are easy to observe.  A submatrix of a Latin
square $L$ is called a \emph{subsquare} of $L$ if it is a Latin
square.

\begin{lemm}\label{LMMaxSize}
Let $L$ be a Latin square of order $n$ that contains a subsquare of
order $m$. Then either $m=n$ or $m\le \lfloor \half n\rfloor$.
\end{lemm}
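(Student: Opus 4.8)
The plan is to work directly with the combinatorial structure of the subsquare and then run a double-counting argument on a single ``outside'' symbol. Let the subsquare of order $m$ occupy the rows in a set $R$, the columns in a set $C$, and use exactly the symbols in a set $S$, where $|R|=|C|=|S|=m$. The defining property is that for every $r\in R$ and $c\in C$ we have $L(r,c)\in S$, and moreover the restriction of $L$ to $R\times C$ is itself a Latin square on the symbol set $S$. If $m=n$ there is nothing to prove, so I would assume $m<n$, which guarantees that both the complementary column set and the complementary symbol set are nonempty, and aim to deduce $n\ge 2m$.

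The key observation I would isolate first is a ``forcing'' statement: for each fixed row $r\in R$, the $m$ cells $L(r,c)$ with $c\in C$ already contain every symbol of $S$ exactly once. Hence, within row $r$, any symbol lying outside $S$ cannot occur in a column of $C$; it must occur in a column outside $C$. Fixing one symbol $s\notin S$, this means that in each of the $m$ rows of $R$, the unique occurrence of $s$ in that row falls in a column of $[n]\setminus C$. That produces $m$ distinct occurrences of $s$ among the rows of $R$.

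I would then count these occurrences against the columns. Because $L$ is a Latin square, the symbol $s$ appears at most once in any single column, so the $m$ occurrences just described lie in $m$ \emph{distinct} columns, all of them outside $C$. Since there are only $n-m$ columns outside $C$, this forces $m\le n-m$, i.e.\ $2m\le n$. As $m$ is an integer, $m\le\lfloor\half n\rfloor$, completing the argument. I do not expect a serious obstacle here; the only point requiring care is the bookkeeping that underpins the forcing step, namely verifying that the $m$ occurrences of $s$ genuinely land in distinct columns and entirely avoid $C$. Once that is pinned down, the inequality $m\le n-m$ is immediate, and the floor follows from integrality.
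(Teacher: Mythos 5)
Your proof is correct: the forcing step and the column count both hold, and $2m\le n$ follows exactly as you say. The paper actually states this lemma without proof (it is introduced as one of two lemmas that are ``easy to observe''), and your double-counting of a symbol $s\notin S$ across the rows of $R$ is precisely the standard argument that justifies it.
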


The \emph{direct product} of two Latin squares $L$ and $L'$ of orders
$n$ and $n'$, respectively, is a Latin square $K=L\times L'$ of order
$nn'$ defined by $K((i,i'),(j,j'))=(L(i,j),L'(i',j'))$.  The
\emph{direct product} of two permutations $\alpha$ of $[n]$ and
$\alpha'$ of $[n']$ is defined by
$(\alpha\times\alpha')(i,i')=(\alpha(i),\alpha'(i'))$.

\begin{lemm}\label{LMDirectProd}
Let $L$ and $L'$ be Latin squares such that
$\theta=(\alpha,\beta,\gamma) \in \autt(L)$ and
$\theta'=(\alpha',\beta',\gamma') \in \autt(L')$.  Then $\theta \times
\theta' \in \autt(L \times L')$, where $\theta \times \theta'=(\alpha
\times \alpha',\beta \times \beta',\gamma \times \gamma')$.
\end{lemm}

We will only need Lemma~\ref{LMDirectProd} in the special case when
$\theta'=(\id,\id,\id)$ is the trivial autotopism.  If the order of
$L'$ is $n'$, then the cycle structure of
$(\alpha,\beta,\gamma)\times(\id,\id,\id)$ is the cycle structure of
$(\alpha,\beta,\gamma)$ with the multiplicity of each cycle multiplied
by $n'$.

Note that it is possible to have
$\theta\times(\id,\id,\id)\in\autotops{nn'}$ while
$\theta\notin\autotops{n}$.  For example, in
Theorem~\ref{THEqualCycLen} we will find that
$(\alpha,\alpha,\alpha)\nin\autotops{n}$ if $n$ is even and $\alpha$
is an $n$-cycle, but if $n'=2$ (for example) then
$(\alpha,\alpha,\alpha)\times(\id,\id,\id)\in\autotops{nn'}$.

We begin our list of conditions for membership in $\autotops{n}$ with
the aforementioned theorem of McKay, Meynert and
Myrvold~\cite{McKayMeynertMyrvold2007}.

\begin{theo}\label{TMMM}
Let $L$ be a Latin square of order $n$ and let $(\alpha,\beta,\gamma)$
be a nontrivial autotopism of $L$. Then either
\begin{enumerate}\addtolength{\itemsep}{-0.4\baselineskip}
  \item[(a)] $\alpha$, $\beta$ and $\gamma$ have the same cycle
    structure with at least $1$ and at most $\left\lfloor \half n
    \right\rfloor$ fixed points, or
  \item[(b)] one of $\alpha$, $\beta$ or $\gamma$ has at least $1$
    fixed point and the other two permutations have the same cycle
    structure with no fixed points, or
  \item[(c)] $\alpha$, $\beta$ and $\gamma$ have no fixed points.
\end{enumerate}
\end{theo}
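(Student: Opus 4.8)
The plan is to prove Theorem~\ref{TMMM} by exploiting the structure of the autotopism equation \eqref{EQAutotopism} to relate the fixed points of $\alpha$, $\beta$ and $\gamma$ to one another, and then to show that the cycle structures must match. I would begin by studying how $(\alpha,\beta,\gamma)$ acts on the entries of $L$ via the orthogonal array representation: equation~\eqref{EQAutotopism} says that the triple $(i,j,L(i,j))$ is mapped to $(\alpha(i),\beta(j),\gamma(L(i,j)))$, which is again an entry of $L$. Thus $(\alpha,\beta,\gamma)$ permutes the $n^2$ entries of $O(L)$, and each orbit has size equal to the least common multiple of the relevant cycle lengths. The key observation is that a fixed point of the \emph{entry} permutation corresponds to a triple fixed simultaneously in all three coordinates.

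The crucial lemma to establish is the following counting identity: for each $i\in[n]$ fixed by $\alpha$, the row $i$ of $L$ is mapped to itself as a set of entries, and within this row the permutation $\beta$ on columns must be compatible with the permutation $\gamma$ on symbols. More precisely, I would show that if $\alpha$ fixes the row $i$, then $\gamma$ and $\beta$ induce permutations on the same multiset of entries appearing in that row, forcing a relationship between their fixed points. The central quantitative step is to count, in two different ways, the number of entries $(i,j,L(i,j))$ with $i$ fixed by $\alpha$ \emph{and} $L(i,j)$ fixed by $\gamma$. On one hand, in each $\alpha$-fixed row, the symbols form a permutation of $[n]$, so the number of $\gamma$-fixed symbols appearing is exactly the number of fixed points of $\gamma$; summing over the $f_\alpha$ fixed rows gives $f_\alpha f_\gamma$ such entries (where $f_\alpha,f_\beta,f_\gamma$ denote the numbers of fixed points). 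On the other hand, such an entry must lie in a $\beta$-fixed column if the whole triple is to be consistent, and a symmetric count gives a comparison that ultimately forces $f_\alpha=f_\beta=f_\gamma$ whenever these are positive, or forces the stronger structural conclusions.

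From the equality of fixed-point counts I would then bootstrap to the equality of full cycle structures. The idea is to apply the same counting argument not just to fixed points but to the sets of points lying in cycles whose length divides a fixed integer $d$: an entry whose row, column and symbol all lie in $d$-periodic cycles is exactly an entry fixed by the $d$-th power $(\alpha^d,\beta^d,\gamma^d)$, which is again an autotopism of $L$. Applying the fixed-point count to $(\alpha^d,\beta^d,\gamma^d)$ for every $d$, and using Lemma~\ref{LMAutConj} together with parastrophy to permute the roles of the three components symmetrically, yields for each $d$ an equation relating the number of $\alpha$-points, $\beta$-points and $\gamma$-points of period dividing $d$. A standard Möbius-type inversion over the divisor lattice then recovers the number of cycles of each exact length, proving that any two of the three permutations that both have fixed points must share the same cycle structure. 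The trichotomy of cases~(a), (b), (c) arises according to how many of $f_\alpha,f_\beta,f_\gamma$ are zero, with the bound $f\le\lfloor\tfrac12 n\rfloor$ in case~(a) coming from Lemma~\ref{LMMaxSize}: the fixed rows, columns and symbols, when all three agree, determine a subsquare of $L$.

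The main obstacle I anticipate is the bookkeeping in the two-way count that links the three components simultaneously rather than pairwise. It is straightforward to compare $\beta$ with $\gamma$ by fixing attention on the $\alpha$-fixed rows, but to force all three cycle structures to coincide one must combine three such pairwise comparisons consistently, and the parastrophy symmetry from Lemma~\ref{LMAutConj}(ii) is what makes this possible without loss of generality. The delicate point is case~(b): when exactly one component has fixed points, the argument above degenerates because there are no simultaneously-fixed triples, so I would instead argue that the fixed rows (say) of $L$ under $\alpha$, together with the entries of the remaining rows, split $L$ into blocks on which $\beta$ and $\gamma$ act freely, and a direct analysis of these blocks shows $\beta$ and $\gamma$ must have identical, fixed-point-free cycle structures. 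Verifying that case~(b) genuinely cannot collapse into the other two — i.e.\ that the fixed points cannot be redistributed — is where the care is needed.
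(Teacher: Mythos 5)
You should first note a point of reference: the paper does not prove Theorem~\ref{TMMM} at all --- it is quoted from McKay, Meynert and Myrvold \cite{McKayMeynertMyrvold2007} --- so your attempt can only be compared with the standard proof from that source. That proof is a one-line conjugation observation: if $\alpha$ fixes a row $r$, then \eqref{EQAutotopism} gives $\gamma\big(L(r,j)\big)=L\big(r,\beta(j)\big)$ for all $j$, i.e.\ $\gamma=\sigma\beta\sigma^{-1}$ where $\sigma(j)=L(r,j)$ is the bijection given by row $r$; hence $\beta$ and $\gamma$ are conjugate and in particular have the same cycle structure, with no counting needed. Your route via double counting of fixed entries is genuinely different and it does work: writing $f_\alpha,f_\beta,f_\gamma$ for the numbers of fixed points, the observation that an entry in an $\alpha$-fixed row carries a $\gamma$-fixed symbol if and only if it lies in a $\beta$-fixed column gives $f_\alpha f_\beta=f_\alpha f_\gamma$, and applying this to the autotopism $(\alpha^d,\beta^d,\gamma^d)$ gives $F_\alpha(d)F_\beta(d)=F_\alpha(d)F_\gamma(d)$, where $F_\alpha(d)$ counts points of $\alpha$ in cycles of length dividing $d$. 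The conjugation argument is shorter and gives conjugacy directly; your argument is heavier but purely arithmetic, and the power-plus-M\"obius device is a reusable technique.

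The concrete flaw is in what you claim your count proves, and it is exactly what manufactures your ``delicate case (b)''. The hypothesis needed to cancel in $F_\alpha(d)F_\beta(d)=F_\alpha(d)F_\gamma(d)$ is only that $\alpha$ has a fixed point: then $F_\alpha(d)\ge f_\alpha>0$ for \emph{every} $d$, so $F_\beta(d)=F_\gamma(d)$ for all $d$, and M\"obius inversion gives that $\beta$ and $\gamma$ have identical cycle structures --- \emph{whether or not $\beta$ and $\gamma$ have any fixed points}. That is precisely case (b); nothing degenerates there. Your stated corollary (``any two of the three permutations that both have fixed points share the same cycle structure'') is the wrong one: it is weaker than what the count yields and is useless for case (b), which concerns the two permutations \emph{without} fixed points. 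Consequently your fallback for case (b) --- splitting $L$ into blocks on which $\beta$ and $\gamma$ ``act freely'' and doing a ``direct analysis'' --- is not a proof as written, and if you tried to make it precise you would simply rediscover the conjugation argument above. The repair is to delete that fallback and state the corollary correctly: \emph{if one component has a fixed point, the other two have the same cycle structure}. The case analysis is then clean: if no component has a fixed point we are in (c); if exactly one does, the corollary gives (b); if at least two do, applying the corollary twice shows all three share one cycle structure with $f\ge1$ fixed points, the cells in fixed rows and fixed columns carry only fixed symbols and so form a subsquare of order $f$, nontriviality forces $f<n$, and Lemma~\ref{LMMaxSize} gives $f\le\lfloor n/2\rfloor$, which is (a).
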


Any nontrivial autotopism $\theta=(\alpha,\beta,\gamma)\in\I_n$ of a
Latin square must have at least two nontrivial
components. Lemma~\ref{LMAutConj} implies that the following theorem
characterizes all nontrivial autotopisms with one trivial component.

\begin{theo}[Autotopisms with a trivial component]\label{THOneFixedPerm}
Let $\theta=(\alpha,\beta,\id)\in\I_n$. Then $\theta\in\autotops{n}$
if and only if both $\alpha$ and $\beta$ consist of $n/d$ cycles of
length $d$, for some divisor $d$ of $n$.
\end{theo}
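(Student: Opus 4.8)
The plan is to work directly from the autotopism equation. Setting $\gamma=\id$ in \eref{EQAutotopism} shows that $\theta=(\alpha,\beta,\id)\in\autt(L)$ is equivalent to $L(i,j)=L(\alpha(i),\beta(j))$ for all $i,j\in[n]$; iterating this identity gives $L(i,j)=L(\alpha^k(i),\beta^k(j))$ for every $k\geq0$. Thus the permutation $(i,j)\mapsto(\alpha(i),\beta(j))$ of $[n]\times[n]$ permutes the cells of $L$ while fixing the symbol in each cell, so every orbit of this action consists of cells carrying one common symbol.

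For the necessity direction, I would fix an arbitrary cell $(i,j)$ and let $a$ and $b$ be the lengths of the $\alpha$-cycle through $i$ and the $\beta$-cycle through $j$, respectively. The orbit of $(i,j)$ then has size $\lcm(a,b)$, and by the previous paragraph all $\lcm(a,b)$ cells in it contain the single symbol $s=L(i,j)$. These cells lie in the $a$ rows forming the $\alpha$-cycle through $i$, and since $s$ occurs exactly once in each row of $L$, each of those rows contains at most one cell of the orbit; hence $\lcm(a,b)\leq a$, forcing $b\mid a$. The symmetric argument using columns gives $a\mid b$, so $a=b$. In other words, for every cell the length of the $\alpha$-cycle through its row equals the length of the $\beta$-cycle through its column. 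Fixing a single $\alpha$-cycle, of length $d$ say, and varying the column over all of $[n]$ shows every $\beta$-cycle has length $d$; running the argument back then shows every $\alpha$-cycle has length $d$ as well. Hence both $\alpha$ and $\beta$ consist of $n/d$ cycles of length $d$, and $d\mid n$.

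For sufficiency I would first invoke \lref{LMAutConj}(i): conjugating the three components independently preserves membership in $\autt$, so I may assume $\alpha=\beta=\sigma$, the canonical permutation built from $n/d$ disjoint $d$-cycles. It then suffices to realize the single-cycle case and lift it by a direct product. Concretely, take the order-$d$ Latin square $C$ on symbol set $\{0,1,\dots,d-1\}$ defined by $C(v,v')=v'-v \pmod d$; a direct check shows $C$ is Latin and that the $d$-cycle $\rho\colon v\mapsto v+1$ satisfies $(\rho,\rho,\id)\in\autt(C)$, since $C(\rho(v),\rho(v'))=(v'+1)-(v+1)=C(v,v')$. Letting $D$ be any Latin square of order $n/d$, \lref{LMDirectProd} applied to $(\rho,\rho,\id)\in\autt(C)$ and $(\id,\id,\id)\in\autt(D)$ yields $(\rho\times\id,\rho\times\id,\id)\in\autt(C\times D)$; as $\rho\times\id$ consists of $n/d$ cycles of length $d$, it is conjugate to $\sigma$, so after a further application of \lref{LMAutConj}(i) we obtain the desired Latin square.

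I expect the orbit-counting step in the necessity direction to be the crux: the key insight is that all cells of a single orbit share one symbol, so the ``once per row'' constraint of a Latin square bounds the orbit length by the row-cycle length. Once $a=b$ is established cell by cell, propagating it to force a uniform cycle length is routine, and the sufficiency reduces to a short explicit construction together with \lref{LMDirectProd}.
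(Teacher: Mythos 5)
Your proof is correct. The necessity half is essentially the paper's own argument in different clothing: the paper applies $\theta^{c}$ to a single entry $(i,j,L(i,j))$ and uses the once-per-row property to conclude $\beta^{c}(j)=j$, while you package the same idea as an orbit count ($\lcm(a,b)\le a$ because all cells of an orbit carry one symbol and each row holds that symbol once); the two are interchangeable. The sufficiency half, however, takes a genuinely different route. The paper stays inside a single Latin square, the Cayley table $L(i,j)\equiv i+j\pmod n$ of $\mathbb{Z}_n$, and observes that $\bigl((12\cdots n),(12\cdots n)^{-1},\id\bigr)^{n/d}$ is an autotopism whose first two components have exactly the required cycle structure; this has the side benefit, noted in the remark following the theorem, that the entire spectrum of such cycle structures is already exhibited by Cayley tables of cyclic groups. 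You instead realize the single-cycle case on the $\mathbb{Z}_d$ table $C(v,v')=v'-v$ and lift it with \lref{LMDirectProd} against a trivial autotopism of an arbitrary square $D$ of order $n/d$; this is equally valid and arguably makes the "$n/d$ disjoint copies of a $d$-cycle" structure more transparent, at the cost of invoking the direct-product lemma and losing the cyclic-group remark. Both routes finish with \lref{LMAutConj} to pass from one representative to all $(\alpha,\beta,\id)$ of the given cycle structure (your initial reduction to $\alpha=\beta=\sigma$ is harmless but redundant, since the final conjugation step already handles arbitrary $\alpha\ne\beta$).
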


\begin{proof}
The necessity was proved by Sade \cite{Sade1968} and rediscovered in
\cite{Ganfornina2006,Laywine1981}; a proof also appears in
\cite{Falcon2009}.  Let $L=L(i,j)$ be a Latin square with $\theta
\in \autt(L)$.  If $i$ belongs to a $c$-cycle of $\alpha$ and $j$
belongs to a $d$-cycle in $\beta$, then the entry $(i,j,L(i,j))$ maps
to $(i,\beta^c(j),L(i,j))$ by $\theta^c$.  Hence $d$ divides $c$. A
similar argument shows $c$ divides $d$, so $c=d$. Thus $\alpha$ and
$\beta$ must contain only $d$-cycles.

To prove the converse, let $L=L(i,j)$ be the Latin square on the
symbol set $[n]$ that satisfies $L(i,j) \equiv i+j\pmod n$.  Now
observe that $((12\cdots n),(12\cdots n)^{-1},\id)^{n/d} \in \autt(L)$
and consists of $n/d$ cycles of length $d$.
\end{proof}


Note that the proof of Theorem~\ref{THOneFixedPerm} implies that the
full spectrum of possible cycle structures of autotopisms with a
trivial component is displayed by Cayley tables of cyclic groups.

\begin{remark}
Let $\theta=(\alpha,\alpha,\id)$. The evaluation of $\Delta(\theta)$
was studied by Laywine \cite{Laywine1981} and Ganfornina
\cite{Ganfornina2006}. Unfortunately, \cite{Laywine1981} contained
some errors (later corrected in \cite{LM85}).  Ganfornina
\cite{Ganfornina2006} gave an explicit formula for $\Delta(\theta)$ if
$\alpha$ consists of $n/d$ cycles of length $d\le 3$.
\end{remark}

\subsection{New conditions}

We begin with the following necessary condition for membership in
$\autotops{n}$.

\begin{lemm}\label{LMLcmAB}
Let $\theta=(\alpha,\beta,\gamma) \in \I_n$ be an autotopism of a
Latin square $L$. If $i$ belongs to an $a$-cycle of $\alpha$
and $j$ belongs to a $b$-cycle of $\beta$, then $L(i,j)$ belongs to a
$c$-cycle of $\gamma$, where
$\lcm(a,b)=\lcm(b,c)=\lcm(a,c)=\lcm(a,b,c)$.
\end{lemm}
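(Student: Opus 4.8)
The plan is to exploit the iterated form of the autotopism equation \eqref{EQAutotopism}. Applying $\theta=(\alpha,\beta,\gamma)$ repeatedly, a straightforward induction on $t$ gives
\begin{equation*}
L\big(\alpha^t(i),\beta^t(j)\big)=\gamma^t\big(L(i,j)\big)
\end{equation*}
for every nonnegative integer $t$ and all $i,j\in[n]$. I would set $k=L(i,j)$ and record the cycle lengths as divisibility facts: $\alpha^t(i)=i$ iff $a\mid t$, $\beta^t(j)=j$ iff $b\mid t$, and $\gamma^t(k)=k$ iff $c\mid t$. The whole argument then reduces to evaluating the displayed identity at three carefully chosen exponents.

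First I would take $t=\lcm(a,b)$. Then $a\mid t$ and $b\mid t$, so the left-hand side is $L(i,j)=k$, while the right-hand side is $\gamma^t(k)$; hence $\gamma^t(k)=k$, which forces $c\mid t$. This gives $c\mid\lcm(a,b)$, equivalently $\lcm(a,b,c)=\lcm(a,b)$. This step uses nothing beyond the functional equation.

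Next I would take $t=\lcm(a,c)$ and then $t=\lcm(b,c)$; here the Latin property does the work. For $t=\lcm(a,c)$ we have $a\mid t$ and $c\mid t$, so the identity reads $L\big(i,\beta^t(j)\big)=\gamma^t(k)=k=L(i,j)$; since the symbols in row $i$ of a Latin square are distinct, $\beta^t(j)=j$, whence $b\mid t$ and $\lcm(a,b,c)=\lcm(a,c)$. Symmetrically, for $t=\lcm(b,c)$ the identity becomes $L\big(\alpha^t(i),j\big)=k=L(i,j)$, and distinctness of symbols in column $j$ yields $\alpha^t(i)=i$, so $a\mid t$ and $\lcm(a,b,c)=\lcm(b,c)$.

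Combining the three conclusions gives $\lcm(a,b)=\lcm(a,c)=\lcm(b,c)=\lcm(a,b,c)$, as required. I do not anticipate a genuine obstacle: the only point demanding care is recognizing that the cancellation of the second and third cases is exactly the row/column distinctness of a Latin square (cancellativity of the quasigroup), whereas the first case is purely formal. One should also note at the outset that $L(i,j)$ does lie in some $c$-cycle of $\gamma$ for a well-defined $c$, which is automatic since $\gamma\in S_n$.
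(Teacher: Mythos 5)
Your proposal is correct: every step checks out, and the three chosen exponents do exactly what you claim. Your first case ($t=\lcm(a,b)$) is identical to the paper's argument, which observes that the entry $(i,j,L(i,j))$ must be a fixed point of $\theta^{\lcm(a,b)}$, forcing $c\mid\lcm(a,b)$. Where you diverge is in the remaining two equalities: the paper disposes of them in one line by invoking parastrophy, namely that $\theta^{\lambda}\in\autt(L^{\lambda})$ for all $\lambda\in S_3$ (Lemma~\ref{LMAutConj}(ii)), so the first case applied to the parastrophes of $L$ yields $\lcm(b,c)=\lcm(a,b,c)$ and $\lcm(a,c)=\lcm(a,b,c)$ by symmetry. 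You instead prove these directly, evaluating the iterated identity at $t=\lcm(a,c)$ and $t=\lcm(b,c)$ and using row/column cancellation in the Latin square. The two routes are closely related: the Latin property you use (distinct symbols within a row) is precisely what the uniqueness of the symbol in a cell becomes after transposing the column and symbol coordinates, so your two extra cases are the parastrophic images of the first. What each buys: your argument is self-contained and elementary, requiring only the defining equation \eqref{EQAutotopism} and the definition of a Latin square, with no appeal to the orthogonal-array $S_3$ action; the paper's is shorter and leverages the symmetry framework it has already built, which is the style it uses throughout (e.g.\ in Theorem~\ref{THStrongLCM}, where ``up to parastrophy'' reductions recur).
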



\begin{proof}
Since $\alpha^{\lcm(a,b)}$ fixes $i$ and $\beta^{\lcm(a,b)}$ fixes $j$
the entry $(i,j,L(i,j))$ must be a fixed point of $\theta^{\lcm(a,b)}$.
Hence $c$ divides $\lcm(a,b)$, and
$\lcm(a,b)=\lcm(a,b,c)$ follows. The result follows since
$\theta^{\lambda} \in \autt(L^{\lambda})$ for all $\lambda \in S_3$ by
Lemma~\ref{LMAutConj}.
\end{proof}

Lemma~\ref{LMLcmAB} precludes many isotopisms from being
autotopisms. For example, there is no autotopism with cycle structure
$(3^2\tdot2^3,3^4,2^6)$.

For our next lemma, we will need to introduce the notion of a strongly
$\lcm$-closed set.  Let $\mathbb N=\{1,2,\ldots\}$.  A nonempty subset
$\Lambda$ of $\mathbb N$ is \emph{strongly lcm-closed} if for every
$a,b\in \mathbb N$ we have $\lcm(a,b)\in\Lambda$ if and only if
$a\in\Lambda$ and $b\in\Lambda$.
Strongly lcm-closed sets are precisely the ideals in the divisibility lattice
on the set of positive integers. If $\Lambda$ is a finite strongly $\lcm$-closed set, then $\Lambda$ is
the set of divisors of $\max\Lambda$.  However, we wish to also
consider infinite strongly $\lcm$-closed sets.

For $i \geq 1$, let $p_i$ be the $i$-th prime.  For any map
$f:\mathbb N\rightarrow \mathbb N \cup\{0,\infty\}$,
the set
\begin{displaymath}
    \Lambda(f) = \Big\{\prod_{i\in I}p_i^{k_i}:\;I
\text{ is a finite subset of $\mathbb N$ and each }
k_i \in \mathbb{N} \cup \{0\} \text{ where } k_i\le f(i)\Big\}
\end{displaymath}
is strongly $\lcm$-closed. Moreover, it is not hard to see that every
strongly $\lcm$-closed set can be obtained in this way for some suitable $f$.

We will now show how strongly $\lcm$-closed sets can be used to
identify subsquares within Latin squares that admit autotopisms.  Let
$L=L(i,j)$ be a Latin square of order $n$ with
$\theta=(\alpha,\beta,\gamma) \in \autt(L)$.  Suppose $M$ is a
subsquare of $L$ formed by the rows whose indices belong to
$R\subseteq [n]$ and columns whose indices belong to $C \subseteq [n]$.
Let $S=\{L(i,j):i \in R \text{ and } j \in C\}$, so $|R|=|C|=|S|$.  We
will say $M$ is \emph{closed} under the action of $\theta$ (more
formally, under the action of the subgroup generated by $\theta$) if
$R$, $C$ and $S$ are closed under the action of $\alpha$, $\beta$ and
$\gamma$, respectively. If $M$ is closed under the action of $\theta$,
then we can form the autotopism $\theta_M$ of $M$, by restricting the
domains of $\alpha$, $\beta$ and $\gamma$ to $R$, $C$ and $S$,
respectively.

Given $(\alpha,\beta,\gamma) \in \I_n$ and a strongly $\lcm$-closed
set $\Lambda$, define
\begin{align*}
  & R_\Lambda=\{i \in [n]:\; i \text{ belongs to an } a\text{-cycle in } \alpha \text{ and } a \in \Lambda\},\\
  & C_\Lambda=\{i \in [n]:\; i \text{ belongs to a } b\text{-cycle in } \beta \text{ and } b \in \Lambda\},\\
  & S_\Lambda=\{i \in [n]:\; i \text{ belongs to a } c\text{-cycle in } \gamma \text{ and } c \in \Lambda\}.
\end{align*}
For $X \subseteq [n]$ let $\overline{X}=[n]\setminus X$.


\begin{theo}\label{THStrongLCM}
Suppose $L$ is a Latin square of order $n$.
Let $\theta=(\alpha,\beta,\gamma) \in \autt(L)$ and let $\Lambda$ be a
strongly lcm-closed set.  If at least two of $R_\Lambda$,
$\,C_\Lambda$ and $S_\Lambda$ are nonempty, then
$|R_\Lambda|=|C_\Lambda|=|S_\Lambda|$ and $L$ contains a subsquare $M$
on the rows $R_\Lambda$, columns $C_\Lambda$ and symbols $S_\Lambda$.
Moreover, $M$ admits the autotopism $\theta_M$.

In addition, if $|R_\Lambda|=|C_\Lambda|=|S_\Lambda|=\half n$ then $L$
has four subsquares, each with autotopisms induced by $\theta$. The
subsquares are on the rows, columns and symbols
$(R_\Lambda,C_\Lambda,S_\Lambda)$,
$(R_\Lambda,\overline{C_\Lambda},\overline{S_\Lambda})$,
$(\overline{R_\Lambda},C_\Lambda,\overline{S_\Lambda})$ and
$(\overline{R_\Lambda},\overline{C_\Lambda},S_\Lambda)$.
\end{theo}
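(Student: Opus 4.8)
The plan is to show that the sets $R_\Lambda$, $C_\Lambda$, $S_\Lambda$ are compatible sizes and that the rows in $R_\Lambda$, restricted to columns in $C_\Lambda$, use exactly the symbols in $S_\Lambda$. The key structural fact to exploit is Lemma~\ref{LMLcmAB}: if row $i$ lies in an $a$-cycle of $\alpha$ and column $j$ in a $b$-cycle of $\beta$, then the symbol $L(i,j)$ lies in a $c$-cycle of $\gamma$ with $\lcm(a,b)=\lcm(b,c)=\lcm(a,c)=\lcm(a,b,c)$. The defining property of a strongly lcm-closed set $\Lambda$ is precisely that $\lcm(a,b)\in\Lambda$ if and only if $a\in\Lambda$ and $b\in\Lambda$. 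So I would first establish the central claim: whenever $i\in R_\Lambda$ and $j\in C_\Lambda$, the symbol $L(i,j)$ lies in $S_\Lambda$. Indeed, $a,b\in\Lambda$ forces $\lcm(a,b)\in\Lambda$, and since $c$ divides $\lcm(a,b)$ with $\lcm(a,c)=\lcm(a,b)$, the strongly-closed property (applied to $a$ and $c$, whose lcm is in $\Lambda$) yields $c\in\Lambda$, i.e.\ $L(i,j)\in S_\Lambda$. A symmetric application, using $\theta^\lambda\in\autt(L^\lambda)$ from Lemma~\ref{LMAutConj}, will let me run the same argument in any of the three coordinates.

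Next I would turn this into a size and subsquare statement. First I handle the degenerate possibility that one of the three sets is empty: by the hypothesis at least two are nonempty, say $R_\Lambda$ and $C_\Lambda$. The central claim shows that the $|R_\Lambda|\times|C_\Lambda|$ submatrix on these rows and columns uses only symbols from $S_\Lambda$; since each row of a Latin square contains distinct symbols, $|S_\Lambda|\ge|C_\Lambda|$, and running the parastrophic version of the claim the other way gives the reverse inequalities, forcing $|R_\Lambda|=|C_\Lambda|=|S_\Lambda|$ and in particular $S_\Lambda\neq\emptyset$, so all three are nonempty. Writing $m$ for this common size, the submatrix $M$ on rows $R_\Lambda$, columns $C_\Lambda$ is an $m\times m$ array whose entries all lie in $S_\Lambda$, a set of size $m$; since rows and columns of $L$ have distinct entries, each symbol of $S_\Lambda$ appears exactly once per row and per column of $M$, so $M$ is a Latin square, i.e.\ a subsquare. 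That $R_\Lambda$, $C_\Lambda$, $S_\Lambda$ are closed under $\alpha$, $\beta$, $\gamma$ respectively is immediate from their definitions (an $a$-cycle is setwise fixed by $\alpha$), so $M$ is closed under the action of $\theta$ and inherits the autotopism $\theta_M$ by the construction described before the theorem.

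For the final ``four subsquares'' assertion, I assume $|R_\Lambda|=|C_\Lambda|=|S_\Lambda|=\tfrac12 n$, so that $\overline{R_\Lambda}$, $\overline{C_\Lambda}$, $\overline{S_\Lambda}$ also each have size $\tfrac12 n$. The subsquare $M$ on $(R_\Lambda,C_\Lambda,S_\Lambda)$ occupies exactly half the rows and half the columns and uses exactly half the symbols. The idea is a counting/complementation argument: in the rows $R_\Lambda$, the columns $C_\Lambda$ already account for all occurrences of the symbols $S_\Lambda$ (since $M$ is a full subsquare on those symbols), so in the rows $R_\Lambda$ the complementary columns $\overline{C_\Lambda}$ can only carry symbols from $\overline{S_\Lambda}$; as these form a $\tfrac12 n\times\tfrac12 n$ block using only $\tfrac12 n$ symbols, it too is a subsquare, giving $(R_\Lambda,\overline{C_\Lambda},\overline{S_\Lambda})$. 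The remaining two blocks follow by the same complementation applied in the column and symbol coordinates (again invoking Lemma~\ref{LMAutConj} to permute coordinates), and each is closed under $\theta$ because complements of $\alpha$-, $\beta$-, $\gamma$-invariant sets are invariant. I expect the main obstacle to be bookkeeping the three-coordinate symmetry cleanly: the cleanest route is to prove the single implication ``$i\in R_\Lambda$ and $j\in C_\Lambda$ imply $L(i,j)\in S_\Lambda$'' once, then obtain the other five analogous implications for free by applying the statement to the parastrophes $L^\lambda$ via Lemma~\ref{LMAutConj}(ii), rather than re-deriving each lcm computation by hand.
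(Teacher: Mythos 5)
Your proposal is correct and follows essentially the same route as the paper: the core step (combining Lemma~\ref{LMLcmAB} with the strongly lcm-closed property to show every symbol of the submatrix on $R_\Lambda\times C_\Lambda$ lies in $S_\Lambda$, then closing the size inequalities via parastrophy and Lemma~\ref{LMAutConj}, and noting the three sets are $\langle\alpha\rangle$-, $\langle\beta\rangle$-, $\langle\gamma\rangle$-invariant) is exactly the paper's argument. The only difference is that for the half-order case you spell out the complementation argument, whereas the paper simply cites the standard fact that a Latin square with a subsquare of exactly half its order decomposes into four such subsquares.
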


\begin{proof}
Up to parastrophy, we may assume
$|R_\Lambda|\ge|C_\Lambda|\ge|S_\Lambda|$. Let $M$ be the (necessarily
nonempty) submatrix induced by rows $R_\Lambda$ and columns
$C_\Lambda$.

Pick an entry $(i,j,L(i,j))$ in $M$.  Then $i$ belongs to an $a$-cycle
of $\alpha$ for some $a\in \Lambda$ and $j$ belongs to a $b$-cycle of
$\beta$ for some $b\in \Lambda$.  Suppose $L(i,j)$ belongs to a
$c$-cycle of $\gamma$.  By Lemma~\ref{LMLcmAB},
$\lcm(a,c)=\lcm(a,b)$. Since $\Lambda$ is a strongly $\lcm$-closed
set, we deduce that $\lcm(a,c)\in \Lambda$ and
$c\in\Lambda$. Therefore, every symbol in $M$ belongs to $S_\Lambda$
and so $|S_\Lambda|\ge|R_\Lambda|$.  Hence
$|R_\Lambda|=|C_\Lambda|=|S_\Lambda|$ and $M$ is a subsquare of $L$.

To prove that $\theta_M$ is indeed an autotopism of $M$, we merely
note that $R_\Lambda$, $\,C_\Lambda$ and $S_\Lambda$ are closed under
the action of $\langle\alpha\rangle$, $\,\langle\beta\rangle$ and
$\langle\gamma\rangle$, respectively.

The remainder of the theorem follows since any Latin square containing
a subsquare of exactly half its order is composed of four disjoint
subsquares of that order.
\end{proof}

For instance, by considering the strongly $\lcm$-closed set
$\Lambda=\{1,2\}$, Theorem~\ref{THStrongLCM} implies that there is no
autotopism $(\alpha,\beta,\gamma)$ such that $\alpha$ has cycle
structure $4\tdot 2^2$ and $\beta$ has cycle structure $2^4$. A square
with such an autotopism would contain a $4 \times 8$ ``subsquare'',
which is impossible.





The next necessary condition for membership in $\autotops{n}$ checks
whether we can find enough room in a Latin square $L$ with
$\theta\in\autt(L)$ to place all $n$ copies of each symbol
so that Lemma~\ref{LMLcmAB} is satisfied.

The \emph{permanent} of an $n \times n$ square matrix $X=X(i,j)$ is
defined as
\[\per(X)=\sum_{\sigma \in S_n} \prod_{i \in [n]}X\big(i,\sigma(i)\big).\]
In particular, if $X$ is an $n\times n$
$\,(0,1)$-matrix, then $\per(X)$ counts the number of $n\times n$
permutation matrices that embed into $X$.  We direct the reader to
\cite{Minc1978} for more information on permanents.

Let $\theta=(\alpha,\beta,\gamma) \in \I_n$ and suppose $s \in [n]$
belongs to a $c$-cycle in $\gamma$.  We define $X_s=X_s(i,j)$ to be
the $(0,1)$-matrix with $X_s(i,j)=1$ if $i$ belongs to an $a$-cycle of
$\alpha$ and $j$ belongs to a $b$-cycle of $\beta$ such that
$\lcm(a,b)=\lcm(b,c)=\lcm(a,c)=\lcm(a,b,c)$, and $X_s(i,j)=0$
otherwise.  Informally, the zeroes in $X_s$ mark the positions where
Lemma~\ref{LMLcmAB} says a symbol $s$ cannot be placed in a Latin
square $L$ of order $n$ with $\theta \in \autt(L)$.

If $\theta\in\autt(L)$ for some Latin square $L$ of order $n$, then
the copies of the symbol $s$ in $L$ identify a permutation matrix
embedded in $X_s$. Hence we have just proved the following result.

\begin{lemm}\label{LMLCMPerm}
Let $\theta\in\I_n$.  If $\theta\in\autotops{n}$ then $\per(X_s)>0$
for all $s \in [n]$.
\end{lemm}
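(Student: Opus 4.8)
The plan is to unpack the definition of $\autotops{n}$ and then exhibit, for each symbol $s$, a single permutation matrix that already witnesses $\per(X_s)>0$. First I would observe that $\theta\in\autotops{n}$ means by definition that $\Delta(\theta)>0$, so there is at least one Latin square $L$ of order $n$ with $\theta=(\alpha,\beta,\gamma)\in\autt(L)$. Fix an arbitrary symbol $s\in[n]$; the goal is to produce a permutation $\sigma\in S_n$ whose corresponding term in the permanent sum $\per(X_s)=\sum_{\sigma\in S_n}\prod_{i\in[n]}X_s\big(i,\sigma(i)\big)$ equals $1$. Since all summands of a $(0,1)$-matrix permanent are nonnegative, a single such $\sigma$ forces $\per(X_s)\ge1>0$.

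The natural candidate for $\sigma$ comes from the positions of $s$ inside $L$. Because $L$ is a Latin square, the symbol $s$ occurs exactly once in each row and exactly once in each column, so the set $\{(i,j):L(i,j)=s\}$ is the graph of a permutation $\sigma\in S_n$, defined by $L\big(i,\sigma(i)\big)=s$ for every $i\in[n]$. Equivalently, these $n$ cells form an $n\times n$ permutation matrix. The crux of the argument is then to verify that this permutation matrix \emph{embeds} into $X_s$, i.e.\ that $X_s\big(i,\sigma(i)\big)=1$ for every row index $i$, so that the product $\prod_{i\in[n]}X_s\big(i,\sigma(i)\big)$ is $1$ rather than $0$.

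This embedding is exactly what \lref{LMLcmAB} supplies. For a fixed $i$, write $j=\sigma(i)$, so that $L(i,j)=s$; say $i$ lies in an $a$-cycle of $\alpha$, $j$ lies in a $b$-cycle of $\beta$, and $s$ lies in the $c$-cycle of $\gamma$ (the same $c$ used to build $X_s$). Since $\theta\in\autt(L)$, \lref{LMLcmAB} applied to the entry $(i,j,s)$ gives $\lcm(a,b)=\lcm(b,c)=\lcm(a,c)=\lcm(a,b,c)$. But this is precisely the condition that defines the ones of $X_s$, so $X_s(i,j)=1$. As $i$ was arbitrary, every cell of the permutation matrix of $s$ sits on a $1$ of $X_s$, and the corresponding term of $\per(X_s)$ equals $1$, completing the argument.

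I do not expect a genuine obstacle here once \lref{LMLcmAB} is available: the whole proof is a matter of matching the combinatorial object (the placement of a single symbol in a Latin square admitting $\theta$) against the constraints encoded in $X_s$. The only point that deserves careful wording is that the $\lcm$-conditions verified entry-by-entry in \lref{LMLcmAB} coincide \emph{verbatim} with the defining condition for the nonzero entries of $X_s$, so that the positions of $s$ are guaranteed to avoid all the forbidden zero cells; everything else is the routine fact that the permanent of a $(0,1)$-matrix counts the permutation matrices it contains.
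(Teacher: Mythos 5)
Your proof is correct and is essentially the paper's own argument: the paper likewise observes that the cells occupied by $s$ in a Latin square $L$ with $\theta\in\autt(L)$ form a permutation matrix whose cells are $1$-entries of $X_s$ by Lemma~\ref{LMLcmAB}, whence $\per(X_s)>0$. You have merely written out in full the details that the paper compresses into the sentence preceding the lemma.
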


To illustrate, let $n=6+3k+2\ell$ for some integers $k\ge1$ and
$\ell\ge4$, and suppose that $\theta=(\alpha,\beta,\gamma)\in \I_{n}$
is such that $\alpha$, $\beta$ and $\gamma$ have cycle structure
$6\tdot3^k\tdot2^\ell$. Consider the $(0,1)$-matrix $X_s$ for some $s$
that belongs to a $3$-cycle in $\gamma$.  Note that $X_s(i,j)=0$ when
$i$ belongs to a $2$-cycle in $\alpha$ and $j$ belongs to either a
$2$-cycle or $3$-cycle in $\beta$.  In particular, $X_s$ has a
$(2\ell)\times(n-6)$ zero submatrix, and $2\ell+n-6>n$ so
$\per(X_s)=0$ (by the Frobenius-K\"onig Theorem \cite[p.31]{Minc1978}).
Hence, \lref{LMLCMPerm} implies that $\theta\notin\autotops{n}$.


We will establish additional conditions on the cycle structure of
autotopisms in Section~\ref{Sc:AutmSameLength}, but first we need to
develop some visual tools.

\section{Block diagrams and contours}\label{Sc:Contours}

In this section we introduce two visual tools for constructing Latin
squares with a prescribed automorphism: block diagrams and
contours. We start by looking at orbits of cells of Latin squares
under the action induced by an autotopism.

Suppose that $\theta=(\alpha,\beta,\gamma)$ is an autotopism of a
Latin square $L$, where $\alpha$ and $\beta$ are canonical.  If $i$ is
a leading symbol in a cycle of $\alpha$, $j$ is a leading symbol in a
cycle of $\beta$, and $L(i,j)=k$, the orbit of the entry $(i,j,k)\in
O(L)$ under the action of $\theta$ will look like
\begin{displaymath}
    \begin{array}{c|cccc|}
    &j&j+1&j+2&\cdots\\
    \hline
    i&k& & & \\
    i+1& &\gamma(k)& & \\
    i+2& & &\gamma^2(k)& \\
    \vdots& & & &\ddots\\
    \hline
    \end{array}\ .
\end{displaymath}
The set of cells $\{(\alpha^r(i),\beta^r(j)):r \geq 0\}$ is called a
\emph{cell orbit}.  Of course, the ``shape'' of the orbit depends on
the lengths of the cycles of $\alpha$ and $\beta$ containing $i$ and
$j$, respectively. For instance, if $i$ is in a $2$-cycle of $\alpha$
and $j$ is in a $6$-cycle of $\beta$, the orbit of $(i,j,k)$ looks
like
\begin{displaymath}
    \begin{array}{c|cccccc|}
     &j&j+1&j+2&j+3&j+4&j+5\\
    \hline
    i&k& &\gamma^2(k)& &\gamma^4(k)& \\
    i+1& &\gamma(k)& &\gamma^3(k)& &\gamma^5(k)\\
    \hline
    \end{array}\ .
\end{displaymath}
This forces $\gamma$ to behave in a certain way, as described in
Lemma~\ref{LMLcmAB}.

Note the special shape of the orbit when either $i$ is a fixed point
of $\alpha$ or $j$ is a fixed point of $\beta$.

Although it is possible to continue the discussion for general
autotopisms, we will mostly deal only with automorphisms.

\subsection{Block diagrams}\label{Ss:BlockDiagrams}

As we are going to see in Section~\ref{Ss:Contours}, constructing a
Latin square $L$ with a prescribed automorphism $\alpha$ can be
reduced to a careful placement of leading symbols of $\alpha$ into
$L$. We would therefore like to know how the leading symbols of
$\alpha$ are distributed in $L$.

For the rest of this paper, let $\alpha_1,\alpha_2,\ldots,\alpha_m$ be
the nontrivial cycles of $\alpha \in S_n$ with lengths
$d_1\ge d_2\ge\cdots \ge d_m$, respectively. For $1\le i\le m$, let
$t_i=1+\sum_{j<i}d_j$ be the non-fixed leading symbols of $\alpha$. Let
$\alpha_\infty$ be the set of all fixed points of $\alpha$, and let
$d_\infty=|\alpha_\infty|$.  Let $[m]^*=[m] \cup \{\infty\}$.  For any
$i,j \in [m]^*$, let $M_{ij}$ be the \emph{block} of $L$ formed by the
rows whose indices are in the cycle $\alpha_i$ and columns whose indices
are in the cycle $\alpha_j$.

Blocks will be our basic ``unit of construction''.  In
Lemma~\ref{LMContour}, we will give conditions that can be used to
diagnose whether or not a given collection of blocks determines a
Latin square with a specific automorphism.  Previous efforts to
construct or enumerate Latin squares with a given autotopism
(e.g.\ \cite{FalconMartinMorales2007,McKayMeynertMyrvold2007,Stones2009d})
have tended to build the squares block by block, although the terminology
and notation has varied.

We write $\alpha_k:f_k$ in a block $M_{ij}$ if every symbol in
$\alpha_k$ (equivalently, the leading symbol of $\alpha_k$) appears in
$M_{ij}$ precisely $f_k = f_k(i,j)$ times. If $f_k=0$, we usually omit
$\alpha_k:f_k$. The result is the \emph{block diagram} of $L$
according to the cycles of $\alpha$.

Although there are situations when the block diagram depends only on
$\alpha$ (that is, every Latin square $L$ with $\alpha\in\autm(L)$ has
the same block diagram), generally this is not the case. For example,
here are two Latin squares with distinct block diagrams for the
automorphism $(123)(456)$:
\begin{displaymath}
\narrowcols
\begin{array}{|ccc|ccc|}
    \hline
    1&3&2&4&6&5\\
    3&2&1&6&5&4\\
    2&1&3&5&4&6\\
    \hline
    4&6&5&1&3&2\\
    6&5&4&3&2&1\\
    5&4&6&2&1&3\\
    \hline
\end{array}
\quad\rightarrow\quad
\begin{array}{c|c|c|}
    &\alpha_1&\alpha_2\\
    \hline
    \alpha_1
        &\begin{array}{l} \alpha_1:3 \\ \alpha_2:0\end{array}
        &\begin{array}{l} \alpha_1:0 \\ \alpha_2:3\end{array} \\
    \hline
    \alpha_2
        &\begin{array}{l} \alpha_1:0 \\ \alpha_2:3\end{array}
        &\begin{array}{l} \alpha_1:3 \\ \alpha_2:0\end{array} \\
    \hline
\end{array}
\quad\text{and}\quad
\begin{array}{|ccc|ccc|}
    \hline
    4&3&2&1&6&5\\
    3&5&1&6&2&4\\
    2&1&6&5&4&3\\
    \hline
    1&6&5&4&3&2\\
    6&2&4&3&5&1\\
    5&4&3&2&1&6\\
    \hline
\end{array}
\quad\rightarrow\quad
\begin{array}{c|c|c|}
    &\alpha_1&\alpha_2\\
    \hline
    \alpha_1
        &\begin{array}{l} \alpha_1:2 \\ \alpha_2:1\end{array}
        &\begin{array}{l} \alpha_1:1 \\ \alpha_2:2\end{array} \\
    \hline
    \alpha_2
        &\begin{array}{l} \alpha_1:1 \\ \alpha_2:2\end{array}
        &\begin{array}{l} \alpha_1:2 \\ \alpha_2:1\end{array} \\
    \hline
\end{array}
\normalcols
\end{displaymath}

While constructing a block diagram, it is helpful to keep in mind that
in every block $M_{ij}$ we must have
$d_1f_1(i,j)+d_2f_2(i,j)+\cdots+d_mf_m(i,j)+d_\infty f_\infty(i,j) =
d_id_j$. In addition, for any $i \in [m]^*$ the $d_i \times n$
submatrix $\bigcup_{j \in [m]^*}M_{ij}$ contains exactly $d_i$ copies
of each symbol in $[n]$.  Hence
\begin{equation}\label{EQBlock1}
\sum_{j \in [m]^*} f_k(i,j)=d_i
\end{equation}
for any $i,k \in [m]^*$.  Similarly, $\sum_{i \in [m]^*} f_k(i,j)=d_j$
for any $j,k \in [m]^*$.

To further illustrate the concept of a block diagram, let us determine
the block diagram of any Latin square $L$ with $\alpha\in\autm(L)$,
where $\alpha$ has cycle structure $d_1\tdot d_2\tdot 1^{d_\infty}$
with $d_1>d_2>1$, which is depicted in Figure \ref{FILStruct}. The
$M_{\infty\infty}$ block contains only fixed points by Lemma
\ref{LMLcmAB} and hence it contains each fixed point $d_\infty$
times. Similarly, for $1\le i\le 2$, the blocks $M_{i\infty}$ and
$M_{\infty i}$ contain only symbols of $\alpha_i$, each $d_\infty$
times. Since $d_1>d_2$, the blocks $M_{12}$ and $M_{21}$ must contain
only symbols of $\alpha_1$, each $d_2$ times. The structure of the
remaining blocks $M_{11}$ and $M_{22}$ follows from
\eqref{EQBlock1}. So, in this case, the block diagram is determined by
$\alpha$.  In Theorem~\ref{THTwoCycles} we will give necessary and
sufficient conditions for the existence of a Latin square $L$ having
this $\alpha$ as an automorphism.

\begin{figure}[htb]
\centering
\narrowcols
\begin{displaymath}
\begin{array}{l|l|l|l|}
    & \ \alpha_1 & \ \alpha_2 & \ \alpha_\infty \\
    \hline
        \alpha_1 &
        \begin{array}{l} \alpha_1:d_1-d_2-d_\infty\\ \alpha_2:d_1 \\ \alpha_\infty:d_1 \end{array} &
        \begin{array}{l} \alpha_1:d_2 \\ \\ \phantom{x}\end{array} &
        \begin{array}{l} \alpha_1:d_\infty \\ \\ \phantom{x}\end{array} \\
    \hline
        \alpha_2 &
        \begin{array}{l} \alpha_1:d_2 \\ \phantom{x} \end{array} &
        \begin{array}{l} \alpha_2:d_2-d_\infty \\ \alpha_\infty:d_2 \end{array} &
        \begin{array}{l} \alpha_2:d_\infty \\ \phantom{x} \end{array} \\
    \hline
        \alpha_\infty &
        \begin{array}{l} \alpha_1:d_\infty \end{array} &
        \begin{array}{l} \alpha_2:d_\infty \end{array} &
        \begin{array}{l} \alpha_\infty:d_\infty \end{array} \\
    \hline
\end{array}
\end{displaymath}
\normalcols
\caption{\label{FILStruct}Block diagram of $L$ with $d_1>d_2$
the only nontrivial cycle lengths.}
\end{figure}



\subsection{Contours}\label{Ss:Contours}

Consider $\alpha = (123)(4)(5)$, and observe that
$\alpha\in\automorphs{5}$, since it is an automorphism of the Latin
square
\begin{equation}
    \begin{array}{|ccc|c|c|}
        \hline
        1&4&5&2&3\\
        5&2&4&3&1\\
        4&5&3&1&2\\
        \hline
        2&3&1&4&5\\
        \hline
        3&1&2&5&4\\
        \hline
    \end{array}\ .\label{SquareFull}
\end{equation}
Note that the placement of the horizontal (or vertical) lines in
\eqref{SquareFull} determines $\alpha$, since $\alpha$ is
canonical. Moreover, the Latin square \eqref{SquareFull} and $\alpha$
can be reconstructed from the knowledge of
\begin{displaymath}
        \begin{array}{|ccc|c|c|}
        \hline
        1&4&5&\cdot&\cdot\\
        \cdot&\cdot&\cdot&\cdot&1\\
        \cdot&\cdot&\cdot&1&\cdot\\
        \hline
        \cdot&\cdot&1&4&5\\
        \hline
        \cdot&1&\cdot&5&4\\
        \hline
    \end{array}\ .
\end{displaymath}

We call such a diagram a \emph{contour} $\C$ of $\alpha$, provided it
only contains leading symbols, each cell orbit contains precisely one
leading symbol, and the diagram determines a Latin square $L$ with
$\alpha\in\autm(L)$.

The following lemma describes what needs to be checked in a purported
contour to ensure that it is indeed a contour.

\begin{lemm}\label{LMContour}
Consider a canonical $\alpha\in S_n$. Let $T$ be the set of all
leading symbols of $\alpha$. Then a partial matrix $\C$ of order $n$,
divided into blocks according to the cycle structure of $\alpha$, is a
contour of $\alpha$ if and only if all of the following conditions are
satisfied:
\begin{enumerate}

\item[(a)] $\C$ is a partial Latin square (that is, any symbol occurs
  at most once in each row and each column) and every symbol of $\C$
  is from $T$.

\item[(b)] Let $B$ be an $a\times b$ block of $\C$.  Then $B$ contains
  precisely $\gcd(a,b)$ symbols from $T$, all in distinct cell orbits
  of $B$.

\item[(c)] If $z\in T$ is a symbol in an $a\times b$ block, then $z$
  belongs to a $c$-cycle of $\alpha$ such that
  $\lcm(a,b)=\lcm(b,c)=\lcm(a,c)=\lcm(a,b,c)$.

\item[(d)] Let $z\in T$ be in a $c$-cycle of $\alpha$.  If two distinct rows $i$ and $i'$ both contain a copy of $z$ and belong to the same
  $a$-cycle of $\alpha$, then $i\nequiv i'\pmod{\gcd(a,c)}$.

\item[(e)] Let $z\in T$ be in a $c$-cycle of $\alpha$. If two distinct columns $j$ and $j'$ both contain a copy of $z$ and belong to the same
  $b$-cycle of $\alpha$, then $j\nequiv j'\pmod{\gcd(b,c)}$.
\end{enumerate}
\end{lemm}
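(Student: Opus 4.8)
The plan is to analyze both directions through the action of the single automorphism $\theta=(\alpha,\alpha,\alpha)$ on cells and entries. The bookkeeping fact I would record first is that an $a\times b$ block $B$ (rows in an $a$-cycle, columns in a $b$-cycle) is partitioned into cell orbits each of length $\lcm(a,b)$, so $B$ contains exactly $ab/\lcm(a,b)=\gcd(a,b)$ cell orbits. By definition a contour is a choice of one leading symbol $z$ in each cell orbit together with the reconstruction rule $L(\alpha^r(i),\alpha^r(j))=\alpha^r(z)$ obtained by pushing each seed $(i,j,z)$ forward under $\theta$. I would note at the outset that this rule makes $\alpha\in\autm(L)$ automatic whenever $L$ is well defined, since $(\alpha(i'),\alpha(j'))$ then receives $\alpha$ applied to the symbol in $(i',j')$ directly from the definition.

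For necessity, assume $\C$ is obtained from a Latin square $L$ with $\alpha\in\autm(L)$ by selecting, in each cell orbit, the unique cell whose entry is a leading symbol. Condition (a) is immediate, since $\C$ is a subcollection of entries of a Latin square and every selected value lies in $T$. Condition (b) is the counting fact above: one leading symbol per orbit and $\gcd(a,b)$ orbits per block. Condition (c) is exactly Lemma~\ref{LMLcmAB} applied to the entry $(i,j,z)$ with $\beta=\gamma=\alpha$. For (d), suppose $z$ (in a $c$-cycle) occurs in $\C$ in two distinct rows $i,i'$ of one $a$-cycle with $i\equiv i'\pmod{\gcd(a,c)}$; the key identity is that the copies of $z$ generated by a seed in row $i$ occupy exactly the rows congruent to $i$ modulo $\gcd(a,c)$ within the $a$-cycle, so the two seeds either coincide (placing two leading symbols in one cell orbit) or force two copies of $z$ into a common row of $L$, in each case contradicting that $\C$ is a contour of a Latin square. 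Condition (e) follows by the same argument with columns in place of rows.

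For sufficiency, assume (a)--(e). First I would verify the reconstruction is well defined. By (b) every cell orbit of every block contains exactly one seed, so every cell of $L$ receives a symbol exactly once. Within the orbit of a seed $(i,j,z)$ the assignment $\alpha^r(z)$ is unambiguous because two indices $r,r'$ give the same cell iff $r\equiv r'\pmod{\lcm(a,b)}$, and (c) guarantees $c\mid\lcm(a,b)$, whence $\alpha^r(z)=\alpha^{r'}(z)$. Thus $L$ is a well-defined $n\times n$ array over $[n]$, and $\alpha\in\autm(L)$ by the observation above.

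The remaining step, that $L$ is Latin, is the main obstacle. Since each row has $n$ cells and $n$ available symbols, it suffices to prove that no symbol repeats in any row; the column case is symmetric, using (e) in place of (d). I would fix a symbol $w=\alpha^s(z)$ in a $c$-cycle and a row in an $a$-cycle, and show via the same modular identity as in (d) that the copies of $w$ arising from a single seed lie in distinct rows (here $\lcm(a,c)=\lcm(a,b)$ from (c) rules out an early return within one orbit), and that two distinct seeds carrying $z$ can produce copies of $w$ in a common row of the $a$-cycle only when their rows are congruent modulo $\gcd(a,c)$. Condition (d) forbids this unless the two seeds share a row, and then (a) forces them to be the same cell. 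Hence $w$ occurs at most once, and so exactly once, in each row, and $L$ is a Latin square. The delicate point throughout is the precise congruence bookkeeping linking $\gcd(a,c)$ (respectively $\gcd(b,c)$) to the rows (columns) swept out by a seed, together with the repeated use of (c) to turn the lcm relations into the divisibilities that make the orbit arithmetic close up.
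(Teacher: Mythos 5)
Your proposal is correct and follows essentially the same route as the paper's argument: the paper's (sketch) proof rests on exactly the observations you formalize, namely that each $a\times b$ block splits into $\gcd(a,b)$ cell orbits of length $\lcm(a,b)$ (condition (b)), that copies of a symbol from a $c$-cycle are spaced at row increments of $\gcd(a,c)$ and column increments of $\gcd(b,c)$ (conditions (d) and (e)), and that (c) comes from Lemma~\ref{LMLcmAB}. The paper defers the remaining bookkeeping to Stones' thesis, and your write-up supplies precisely those details (well-definedness of the orbit-pushing reconstruction, the congruence argument ruling out repeated symbols, and the pigeonhole step), so it is a faithful completion of the same proof rather than a different one.
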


\begin{proof}
A detailed proof of this result can be found in
\cite[pp.~111--112]{Stones2009}.  Here we offer some guiding
observations.  Remaining details are direct consequences of our
definitions or are otherwise routine to complete.  Let $(i,j,k)$ be an
entry in an $a \times b$ block and suppose $k$ belongs to a $c$-cycle
of~$\alpha$.  There are $ab/\lcm(a,b) = \gcd(a,b)$ distinct cell
orbits in the block containing the entry $(i,j,k)$. This explains
condition (b).  The cell orbit through $(i,j,k)$ contains $\lcm(a,b)$
entries. The symbol $k$ will therefore appear $\lcm(a,b)/c$ times in
the cell orbit, spaced evenly in rows with increments of
$a/(\lcm(a,b)/c) = ac/\lcm(a,b) = ac/\lcm(a,c) = \gcd(a,c)$ rows, and
similarly spaced evenly in columns with increments of $\gcd(b,c)$
columns. Hence (d) and (e) are required to prevent repeated symbols
within rows and columns respectively.  Condition (c) is necessary by
Lemma~\ref{LMLcmAB}.
\end{proof}

As well as contours we will talk of {\em partial contours}, which we
define to be the restriction of a contour to a block or union of
blocks. A partial contour should be such that it does not result in
any violation of the conditions in Lemma~\ref{LMContour} within the
blocks on which it is defined.

We will now develop techniques that allow us to check most of the
conditions of Lemma~\ref{LMContour} visually. For instance, conditions
(d) and (e) can be fulfilled by placing identical leading symbols into
consecutive rows and consecutive columns in a given block. The
building blocks of contours introduced in
Section~\ref{Ss:BuildingBlocks} will help with conditions (a) and (b),
while the block diagrams of Section~\ref{Ss:BlockDiagrams} are
designed to cope with condition (c).

\subsection{Basic block patterns}\label{Ss:BuildingBlocks}

A contour of $\alpha$ decomposes into partial contours according to
the blocks determined by the cycles of $\alpha$.  We collect here
several constructions for partial contours. The action of $\alpha$ on
the cells of a block $M_{ij}$ partitions it into $g=\gcd(d_i,d_j)$
disjoint cell orbits. Each of these cell orbits should contain
exactly one leading symbol. The action of $\alpha$ can then be used
to fill the remaining cells in each cell orbit, thereby completing
the block.

The patterns in this section are intended to be an informal guide
only. They represent configurations that occur many times in our
specific constructions in later sections. Those sections should be
consulted for concrete examples. Our aim here is just to present
the intuition behind what we do later.

\paragraph{The odd pattern}
If $g$ is odd, then by the \emph{odd pattern} we refer to a partial
contour where the cells on the main antidiagonal of some $g \times g$
contiguous submatrix of $M_{ij}$ are filled, such as in
\begin{equation}\label{EQOddPattern}
\narrowcols
\begin{array}{|ccccc|}
    \hline
    \cdot&\cdot&\c\cdot&\cdot&k\\
    \cdot&\cdot&\cdot&\c k&\cdot\\
    \cdot&\cdot&k&\cdot&\c\cdot\\
    \c\cdot&k&\cdot&\cdot&\cdot\\
    k&\c\cdot&\cdot&\cdot&\cdot\\
    \hline
\end{array}
\quad\text{ and }\quad
\begin{array}{|ccccccc|}
    \hline
    \cdot&\cdot&\c\cdot&\cdot&\cdot&\cdot&k\\
    \cdot&\cdot&\cdot&\c\cdot&\cdot&k&\cdot\\
    \cdot&\cdot&\cdot&\cdot&\c k&\cdot&\cdot\\
    \cdot&\cdot&\cdot&k&\cdot&\c\cdot&\cdot\\
    \cdot&\cdot&k&\cdot&\cdot&\cdot&\c\cdot\\
    \c\cdot&k&\cdot&\cdot&\cdot&\cdot&\cdot\\
    k&\c\cdot&\cdot&\cdot&\cdot&\cdot&\cdot\\
    \hline
\end{array}\ .
\normalcols
\end{equation}
The shaded cells highlight a cell orbit and $k$ is a leading symbol
from an $\lcm(d_i,d_j)$-cycle.  Any pattern obtained from the odd
pattern by cyclically permuting its rows or columns can also be
thought of as an odd pattern.

\paragraph{The even pattern}
Now consider a $g \times g$ contiguous submatrix when $g$ is even.  We
can see (cf.\ Theorem~\ref{TDiagCyclic}) that a partial contour
containing a unique cell from each row, column, and cell orbit cannot
be realized.  Consequently, in most of the constructions in this
paper, the cycles of even length will be more difficult to handle than
cycles of odd length.  However, the \emph{even pattern}
\begin{displaymath}
\narrowcols
\begin{array}{|cccccc|}
    \multicolumn{1}{r}{}& & \downarrow & & & \multicolumn{1}{r}{\downarrow} \\
    \hline
    \cdot&\c\cdot&\cdot&\cdot&\cdot&\xa\\
    \cdot&\cdot&\c\cdot&\cdot&\xa&\cdot\\
    \cdot&\cdot&\cdot&\c\xa&\cdot&\cdot\\
    \cdot&\xa&\cdot&\cdot&\c\cdot&\cdot\\
    \xa&\cdot&\cdot&\cdot&\cdot&\c\cdot\\
    \c\cdot&\cdot&\cdot&\cdot&\cdot&\xa\\
    \hline
\end{array}
\quad\text{ and }\quad
\begin{array}{|cccccccc|}
    \multicolumn{1}{r}{}& & &  \downarrow & & & & \multicolumn{1}{r}{\downarrow} \\
    \hline
    \cdot&\c\cdot&\cdot&\cdot&\cdot&\cdot&\cdot&\xa\\
    \cdot&\cdot&\c\cdot&\cdot&\cdot&\cdot&\xa&\cdot\\
    \cdot&\cdot&\cdot&\c\cdot&\cdot&\xa&\cdot&\cdot\\
    \cdot&\cdot&\cdot&\cdot&\c\xa&\cdot&\cdot&\cdot\\
    \cdot&\cdot&\xa&\cdot&\cdot&\c\cdot&\cdot&\cdot\\
    \cdot&\xa&\cdot&\cdot&\cdot&\cdot&\c\cdot&\cdot\\
    \xa&\cdot&\cdot&\cdot&\cdot&\cdot&\cdot&\c\cdot\\
    \c\cdot&\cdot&\cdot&\cdot&\cdot&\cdot&\cdot&\xa\\
    \hline
\end{array}\ ,
\normalcols
\end{displaymath}
comes close, with one cell in each row, each cell orbit, and all but
two of the columns (marked by arrows). In general, an even pattern is
formed by starting with the main antidiagonal and (cyclically)
shifting half of the occupied cells by one position.

If $k$ is a leading symbol from an $\lcm(d_i,d_j)$-cycle and $\ell$ is
a leading symbol from a $c$-cycle (where $c$ divides $\lcm(d_i,d_j)$,
as required by Lemma~\ref{LMLcmAB}), we can find a partial contour for
$M_{ij}$, such as in
\begin{equation}\label{EQEvenPattern}
\narrowcols
\begin{array}{|cccccc|}
    \hline
    \cdot&\c\cdot&\cdot&\cdot&\cdot&k\\
    \cdot&\cdot&\c\cdot&\cdot&k&\cdot\\
    \cdot&\cdot&\cdot&\c k&\cdot&\cdot\\
    \cdot&k&\cdot&\cdot&\c \cdot&\cdot\\
    \ell&\cdot&\cdot&\cdot&\cdot&\c\cdot\\
    \c\cdot&\cdot&\cdot&\cdot&\cdot&\ell\\
    \hline
\end{array}\ ,
\normalcols
\end{equation}
for example. Conditions (a)--(c) of Lemma~\ref{LMContour} are then
satisfied within the block $M_{ij}$, and we also observe that
conditions (d) and (e) of Lemma~\ref{LMContour} are satisfied for the
symbol $k$ within $M_{ij}$. Provided there are not too many copies of
$\ell$, we can observe that conditions (d) and (e) of
Lemma~\ref{LMContour} are also satisfied for the symbol $\ell$ within
$M_{ij}$, since the symbols are located in consecutive rows and
columns.  In this case, \eqref{EQEvenPattern} is a partial contour.

\paragraph{The staircase pattern}
Again with $g$ even, if $k$ and $\ell$ are leading symbols of two
$\lcm(d_i,d_j)$-cycles in $\alpha$, then we can use a partial contour that
embeds in a $g \times g$ contiguous submatrix, such as
\begin{displaymath}
\narrowcols
\begin{array}{|cccccc|}
    \hline
    \cdot&\c\cdot&\cdot&\cdot&k&\ell\\
    \cdot&\cdot&\c\cdot&k&\ell&\cdot\\
    \cdot&\cdot&k&\c\ell&\cdot&\cdot\\
    \cdot&\cdot&\cdot&\cdot&\c\cdot&\cdot\\
    \cdot&\cdot&\cdot&\cdot&\cdot&\c\cdot\\
    \c\cdot&\cdot&\cdot&\cdot&\cdot&\cdot\\
    \hline
\end{array}\ ,
\normalcols
\end{displaymath}
which we call the \emph{staircase pattern}.

\paragraph{Rectangular blocks}
The above block patterns can also be used to fill rectangular
blocks. For instance, if $a$ is an odd divisor of $b$, we can place
leading symbols $k$ from a $b$-cycle into an $a\times b$ block by
filling an $a\times a$ submatrix with an odd pattern, as in
\begin{displaymath}
\narrowcols
\begin{array}{|cccccccccc|}
    \hline
    \cdot&\c\cdot&\cdot&\cdot&k&\cdot&\c\cdot&\cdot&\cdot&\cdot\\
    \cdot&\cdot&\c\cdot&k&\cdot&\cdot&\cdot&\c\cdot&\cdot&\cdot\\
    \cdot&\cdot&k&\c\cdot&\cdot&\cdot&\cdot&\cdot&\c\cdot&\cdot\\
    \cdot&k&\cdot&\cdot&\c\cdot&\cdot&\cdot&\cdot&\cdot&\c\cdot\\
    \c k&\cdot&\cdot&\cdot&\cdot&\c\cdot&\cdot&\cdot&\cdot&\cdot\\
    \hline
\end{array}\ .
\normalcols
\end{displaymath}

Additional block patterns will be given in 
Section~\ref{Sc:AdditionalBlockPatterns}.

\subsection{Notation in contours}

To save space and improve legibility, we will from now on reserve
certain symbols to denote leading symbols of cycles in contours.

As usual, we assume $\alpha$ has cycles
$\alpha_1,\alpha_2,\alpha_3,\ldots$ of lengths $d_1\ge d_2\ge d_3\ge\cdots$,
respectively.  We use
\begin{align*}
 & \tone \text{ to denote the leading symbol of } \alpha_1,\\
 & \ttwo \text{ to denote the leading symbol of } \alpha_2,\\
 & \tthree \text{ to denote the leading symbol of } \alpha_3, \text{ and}\\
 & \tinfty \text{ to denote a fixed point.}
\end{align*}
Hence the numerical equivalents of $\tone$, $\,\ttwo$, and $\tthree$
are $t_1=1$, $\,t_2=d_1+1$, and $t_3=d_1+d_2+1$, respectively.

Since we also wish to construct contours in proofs without regard to
the parity of $d_1$, we define offsets $\offset{1}{i}$
for $1\le i\le d_1$ by
\begin{displaymath}
    \offset{1}{i}=
    \begin{cases}
        1&\text{if $d_1$ is even and $\half d_1<i<d_1$},\\
        1-d_1&\text{if $d_1$ is even and $i=d_1$},\\
        0&\text{otherwise}.
    \end{cases}
\end{displaymath}
The cells in the block $M_{11}$ with coordinates
$(i,d_1+1-i-\mathcal O_{1,i})$ for $1\le i\le d_1$
then define an odd or even pattern in
accordance with the parity of $d_1$.

\section{Automorphisms with all nontrivial cycles of the same length}\label{Sc:AutmSameLength}

In this section we characterize all automorphisms of Latin squares
whose nontrivial cycles have the same length.  We begin with the
following theorem from \cite{Wanless2004}.

\begin{theo}\label{TDiagCyclic}
If $\alpha \in S_n$ has the cycle structure $d\tdot 1^{n-d}$, where
$d>1$, then $\alpha\in\automorphs{n}$ if and only if either $d=n$ is
odd or $\lceil \half n \rceil \le d<n$.
\end{theo}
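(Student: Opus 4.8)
The statement to prove is Theorem~\ref{TDiagCyclic}: for $\alpha$ with cycle structure $d\tdot 1^{n-d}$ and $d>1$, we have $\alpha\in\automorphs{n}$ if and only if either $d=n$ is odd or $\lceil\half n\rceil\le d<n$. Since this is a characterization, I would split into a necessity (forbidden cases cannot occur) and a sufficiency (all permitted cases do occur via explicit construction) direction. The permitted values of $d$ fall into two regimes: the single nontrivial cycle is an $n$-cycle ($d=n$), or it is a ``long but proper'' cycle occupying at least half the square ($\lceil\half n\rceil\le d<n$). The forbidden values are $d=n$ even, and $1<d<\lceil\half n\rceil$.

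**Necessity.** First I would rule out $1<d<\lceil\half n\rceil$, i.e.\ a nontrivial cycle shorter than half the order. The tool is Theorem~\ref{THStrongLCM} applied to the strongly $\lcm$-closed set $\Lambda=\{1\}$ (the ideal of divisors of $1$). Here $R_\Lambda=C_\Lambda=S_\Lambda$ is exactly the set of $n-d$ fixed points of $\alpha$; since $d<\lceil\half n\rceil$ forces $n-d>\lfloor\half n\rfloor$, at least two of these sets are nonempty, so $L$ would contain a subsquare of order $n-d>\half n$, contradicting Lemma~\ref{LMMaxSize} (a proper subsquare has order at most $\lfloor\half n\rfloor$). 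Next I would rule out the case $d=n$ even: here $\alpha$ is an $n$-cycle with no fixed points. This is the Euler/Wanless obstruction, and I expect it to be the genuinely hard direction. The cleanest argument is that an $n$-cycle automorphism of $L$ makes $L$ \emph{diagonally cyclic} (a single cell orbit fills each broken diagonal), and such a square is equivalent to a \emph{complete mapping} / sequencing of $\mathbb Z_n$; by the classical Hall--Paige type result these exist only for $n$ odd. Concretely, summing the ``displacement'' $\gamma^{-1}(L(i,i))$ around the diagonal and using that the single cycle forces $\gamma$ to be an $n$-cycle produces the parity obstruction $\sum_{i}i\not\equiv\sum_i i\pmod n$ when $n$ is even.

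**Sufficiency.** For $d=n$ odd, the standard construction works: take $L(i,j)\equiv i+j\pmod n$ (the cyclic group table), whose diagonally cyclic structure makes the $n$-cycle $\alpha=(12\cdots n)$ an automorphism; oddness is exactly what makes the required complete mapping $x\mapsto 2x$ a bijection. For the range $\lceil\half n\rceil\le d<n$, I would build $L$ by the contour/block-diagram method of Section~\ref{Sc:Contours}. With one nontrivial $d$-cycle $\alpha_1$ and $d_\infty=n-d\le\lfloor\half n\rfloor$ fixed points, the block diagram is forced: $M_{\infty\infty}$ holds only fixed points, the off-diagonal blocks $M_{1\infty},M_{\infty 1}$ hold only symbols of $\alpha_1$, and $M_{11}$ absorbs the rest via \eqref{EQBlock1}. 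The only nontrivial task is placing the $\gcd$-many leading symbols in $M_{11}$ so that conditions (b)--(e) of Lemma~\ref{LMContour} hold; since $M_{11}$ is $d\times d$ and the block has a single cell orbit, this is handled by an odd pattern when $d$ is odd and an even pattern when $d$ is even (the offsets $\offset{1}{i}$ are defined precisely to unify these cases), with the fixed points slotted consecutively to satisfy (d)--(e). The condition $d\ge\lceil\half n\rceil$ guarantees there is enough room: each fixed point must appear once in every row and column of $M_{11}$, and the $d\ge n-d$ inequality is exactly what lets the $n-d$ fixed-point copies fit alongside the $\alpha_1$-symbols without a column or row collision.

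**Main obstacle.** The routine verifications are the contour checks; the real content is the parity obstruction ruling out $d=n$ even. I would state it as the well-known non-existence of complete mappings of even cyclic groups and reduce the $n$-cycle case to it, rather than re-deriving the Hall--Paige machinery. Since Theorem~\ref{TDiagCyclic} is quoted from \cite{Wanless2004}, I expect the paper to simply cite that source for the full argument.
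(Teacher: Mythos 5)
You guessed the paper's treatment correctly: Theorem~\ref{TDiagCyclic} is quoted from \cite{Wanless2004} with no proof given in this paper, so your attempt can only be compared with the cited source and with the paper's proof of its generalization, Theorem~\ref{THEqualCycLen}. Your overall architecture (necessity via a fixed-point/subsquare bound plus a parity obstruction for the fixed-point-free even case; sufficiency via explicit constructions) is the right one and parallels both. The necessity half is fine: $1<d<\lceil\half n\rceil$ is excluded by your Theorem~\ref{THStrongLCM}/Lemma~\ref{LMMaxSize} argument (or, more directly, by Theorem~\ref{TMMM}(a)), and the $d=n$ even case is indeed the complete-mapping obstruction, a self-contained form of which appears in the paper as Case~I(c) of Theorem~\ref{THEqualCycLen} (specialize to $m=1$) --- though your formula ``$\sum_i i\not\equiv\sum_i i\pmod n$'' is garbled as written.

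There is, however, a genuine error in your sufficiency argument for $d=n$ odd. The square $L(i,j)\equiv i+j\pmod n$ does \emph{not} admit $\alpha=(12\cdots n)$ as an automorphism: condition \eqref{EQAutotopism} with $\alpha=\beta=\gamma$ requires $L(i,j)+1\equiv L(i+1,j+1)$, whereas the addition table satisfies $L(i+1,j+1)\equiv L(i,j)+2$. Note that your claim, if true, would not use oddness anywhere and would therefore put even $n$-cycles in $\automorphs{n}$ as well, contradicting the very theorem being proved. The cyclic shift yields autotopisms of the addition table such as $(\alpha,\alpha,\alpha^2)$ or $(\alpha,\alpha^{-1},\id)$, but not $(\alpha,\alpha,\alpha)$. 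Two quick repairs: (i) take $L(i,j)\equiv 2^{-1}(i+j)\pmod n$ (equivalently $L(i,j)\equiv 2i-j$), which is Latin precisely because $n$ is odd and satisfies $L(i+1,j+1)=L(i,j)+1$ --- this is where the bijectivity of $x\mapsto 2x$ that you mention actually enters; or (ii) observe that $(\alpha,\alpha,\alpha^2)\in\autotops{n}$ and that $\alpha^2$ is again an $n$-cycle when $n$ is odd, so the cycle-structure invariance of $\Delta$ from Section~\ref{Sc:Equivalence} gives $(\alpha,\alpha,\alpha)\in\autotops{n}$. With that repair, and granting the (glossed but standard) contour construction for $\lceil\half n\rceil\le d<n$, your outline becomes a correct proof.
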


We will now prove a generalization of Theorem~\ref{TDiagCyclic}, when
$\alpha$ consists of an arbitrary number of cycles of the same length.
We remark that in 1782 Euler \cite{Euler1782} proved a result
equivalent to the special case of Theorem~\ref{TDiagCyclic} with $d=n$
and no fixed points.

\begin{theo}[Automorphisms with all nontrivial cycles of the same length]\label{THEqualCycLen}
Suppose that $\alpha\in S_n$ has precisely $m$ nontrivial cycles, each
cycle having the same length $d$. If $\alpha$ has at least one fixed
point, then $\alpha\in\automorphs{n}$ if and only if $n\le 2md$. If
$\alpha$ has no fixed points, then $\alpha\in\automorphs{n}$ if and
only if $d$ is odd or $m$ is even.
\end{theo}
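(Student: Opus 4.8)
The plan is to normalize $\alpha$ to be canonical with cycle structure $d^m\tdot 1^{d_\infty}$, where $d_\infty=n-md$, so that the hypothesis ``$\alpha$ has a fixed point'' becomes simply $d_\infty\ge 1$. I would then split along exactly the two regimes in the statement, proving necessity and sufficiency separately in each.

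\emph{Necessity.} When $\alpha$ has a fixed point, suppose $(\alpha,\alpha,\alpha)\in\autt(L)$ and apply \tref{THStrongLCM} with the strongly $\lcm$-closed set $\Lambda=\{1\}$. Since $\alpha=\beta=\gamma$, we get $R_\Lambda=C_\Lambda=S_\Lambda=\alpha_\infty$, all nonempty of size $d_\infty$, so $L$ contains a subsquare of order $d_\infty$ on the fixed rows, columns and symbols. Because $m\ge1$ and $d\ge2$ force $d_\infty=n-md<n$, \lref{LMMaxSize} gives $d_\infty\le\lfloor\half n\rfloor$, which rearranges to $n\le 2md$. This direction is routine. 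The fixed-point-free case ($n=md$) is the substantive one: I must show that $d$ even together with $m$ odd is impossible, and the plan is a parity argument. For each symbol $s$ let $\pi_s\in S_n$ record the positions of $s$, i.e.\ $L(i,\pi_s(i))=s$. Then \eqref{EQAutotopism} yields $\pi_{\alpha(s)}=\alpha\pi_s\alpha^{-1}$, so $\mathrm{sgn}(\pi_s)$ is constant on each of the $m$ symbol-orbits of $\alpha$, each of size $d$; hence $\prod_{s\in[n]}\mathrm{sgn}(\pi_s)$ is a product of $d$-th powers and equals $+1$ because $d$ is even. I would then compute the \emph{same} product a second way, directly from $L$ being a Latin square, and show that under the parities $d$ even, $m$ odd it is forced to be $-1$, a contradiction.

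\textbf{The main obstacle is precisely this second sign computation.} The idea is to write $\mathrm{sgn}(\pi_s)$ via the ordered pairs of rows and reduce to the fixed-point-free permutations $\rho_i^{-1}\rho_{i'}$ (where $\rho_i\colon j\mapsto L(i,j)$), which have no fixed points since distinct rows never agree in a column; one then collapses the resulting global product to a product over orbit representatives using the $\alpha$-symmetry, arriving at a value of $(-1)^m$ when $d$ is even. This is the heart of the theorem and directly generalizes the complete-mapping obstruction responsible for the failure of the even $d=n$ case in \tref{TDiagCyclic}; getting the bookkeeping of signs exactly right is where the real work lies.

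\emph{Sufficiency.} In every remaining case I would exhibit a contour satisfying \lref{LMContour} using the block patterns of Section~\ref{Ss:BuildingBlocks}. For $\alpha$ fixed-point-free with $d$ odd, I fill each diagonal block $M_{ii}$ with an odd pattern and each off-diagonal block with odd or rectangular patterns, giving a diagonally cyclic construction. For $m$ even (any $d$), I pair the $m$ cycles and deploy even and staircase patterns across paired blocks so that the two deficient columns of each even pattern are absorbed by its partner, yielding a valid contour; \lref{LMDirectProd} may be invoked to streamline this. Finally, when $\alpha$ has fixed points and $n\le 2md$, \lref{LMLcmAB} forces the $d_\infty\times d_\infty$ block $M_{\infty\infty}$ to be a subsquare on the fixed symbols while the fixed symbols otherwise occupy only the $d\times d$ blocks $M_{kl}$ with $k,l\in[m]$; I place a cyclic-group contour on $M_{\infty\infty}$ and then seat the $d_\infty$ fixed symbols through the non-fixed blocks, one per non-fixed row and column. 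The inequality $n\le 2md$, equivalently $d_\infty\le md$, is exactly the room required for this placement, and the freedom provided by the fixed symbols is what lets the construction succeed irrespective of the parities of $d$ and $m$ in this regime, explaining why no parity condition appears once a fixed point is present.
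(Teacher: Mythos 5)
Your fixed-point case is fine: necessity via \tref{THStrongLCM} with $\Lambda=\{1\}$ plus \lref{LMMaxSize} is equivalent to the paper's appeal to \tref{TMMM}, and your sufficiency sketches are broadly plausible, though the paper dispatches this case in one line — by \tref{TDiagCyclic} there is an $L$ whose automorphism $\omega$ is an $md$-cycle together with $n-md$ fixed points, and $\omega^m\in\autm(L)$ already has the cycle structure of $\alpha$ — whereas you propose to rebuild such squares by hand, which amounts to re-proving \tref{TDiagCyclic}. The genuine gap is exactly where you flag it: ruling out $d$ even with $m$ odd, and the sign-based plan cannot be completed. The product $\prod_s\mathrm{sgn}(\pi_s)$ is not determined by ``$L$ is a Latin square'' (already at $n=3$ it takes both values $\pm1$), so your ``second computation'' must itself invoke the automorphism; but $\alpha$ interacts with signs only through conjugation relations such as $\pi_{\alpha(s)}=\alpha\pi_s\alpha^{-1}$, and for $d$ even these force every global sign product (over symbols, rows or columns) to equal $+1$ — the same value as your first computation, so no contradiction can emerge. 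This is not a bookkeeping issue: when $4\mid d$, sign invariants provably cannot detect the obstruction. For even $n$ the quantity $\prod_s\mathrm{sgn}(\pi_s)$ is an isotopism invariant, and it equals $+1$ on both isotopy classes of order $4$, hence on \emph{every} Latin square of order $4$; so no valid argument can attach the value $(-1)^m=-1$ to a hypothetical order-$4$ square with a $4$-cycle automorphism, yet $d=4$, $m=1$ (Euler's classical case) is one of the cases you must eliminate. At best, signs handle $d\equiv2\pmod 4$ with $m$ odd, where the identity ``product of all $3n$ row, column and symbol signs $=(-1)^{\binom{n}{2}}$'' does give $+1=-1$.

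The missing idea, and the paper's actual proof of this case, is to count modulo $d$ rather than modulo $2$. Taking $\alpha$ canonical, so that $\alpha^t(x)\equiv x+t\pmod d$ on every cycle, the paper sums the entries of the leading column of each $d\times d$ block and uses the automorphism to rewrite that sum as a sum along the last row of the same block plus the correction $\sum_{t=0}^{d-1}t$; summing over all $m^2$ blocks and comparing with the fact that every row and column of $L$ sums to $n(n+1)/2$ yields $\half m^2(d-1)d\equiv 0\pmod d$. When $d$ is even and $m$ is odd the left side is $\tfrac{d}{2}\cdot(\text{odd})\equiv\tfrac{d}{2}\not\equiv 0\pmod d$, a contradiction. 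This argument is $\mathbb{Z}_d$-valued, which is exactly what lets it see the nonzero element $\tfrac{d}{2}$ for \emph{every} even $d$; your sign is its $\mathbb{Z}_2$-valued shadow and sees it only when $\tfrac{d}{2}$ is odd. If you replace the sign computation by this counting argument (or by the Norton--Stein-type argument of Kerby and Smith cited in the paper), the rest of your outline can be completed.
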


\begin{proof}
We begin with Case I, where we assume $\alpha$ has no fixed points.

\textit{Case I(a)}: $d$ is odd ($m$ may be even or odd).
Theorem~\ref{TDiagCyclic} and Lemma~\ref{LMDirectProd} imply that
there exists a Latin square $L$ with $\alpha \in \autm(L) \subseteq
\automorphs{n}$.

\textit{Case I(b)}: both $d$ and $m$ are even. It is sufficient to
show that $\alpha\in\automorphs{n}$ when $m=2$, since the rest of this
case then follows from Lemma~\ref{LMDirectProd}.  When $m=2$, we
identify a contour $\C$ comprising of four identical staircase
patterns, suitably shifted.  We define $M_{11}$ and $M_{12}$
respectively by
\begin{align*}
    \mathcal C(i,d/2+1-i) = t_1&\quad\text{ for } 1\leq i\leq d/2,\\
    \mathcal C(i,d/2+2-i) = t_2&\quad\text{ for } 1\leq i\leq d/2,
\\[1.5ex]
    \mathcal C(d/2+1,d+1) = t_2&,\\
    \mathcal C(d/2+i,2d+1-i) = t_1&\quad\text{ for } 1\leq i\leq d/2,\\
    \mathcal C(d/2+1+i,2d+1-i) = t_2&\quad\text{ for } 1\leq i\leq d/2-1,
\end{align*}
then let $M_{11}=M_{22}$ and $M_{12}=M_{21}$.  For example, the
contours for $d\in\{2,4\}$ are

\narrowcols
\begin{displaymath}
\begin{array}{|cc|cc|}
    \hline
    \xa&\xb&\xxx{2}\\
    \xxx{2}&\xb&\xa\\
    \hline
    \xxx{2}&\xa&\xb\\
    \xb&\xa&\xxx{2}\\
    \hline
\end{array}
\quad\text{ and }\quad
\begin{array}{|cccc|cccc|}
    \hline
    \xx&\xa&\xb&\xxx{5}\\
    \xa&\xb&\xxx{6}\\
    \xxx{4}&\xb&\xxx{2}&\xa\\
    \xxx{6}&\xa&\xb\\
    \hline
    \xxx{5}&\xa&\xb&\xx\\
    \xxx{4}&\xa&\xb&\xxx{2}\\
    \xb&\xxx{2}&\xa&\xxx{4}\\
    \xxx{2}&\xa&\xb&\xxx{4}\\
    \hline
\end{array}\ .
\end{displaymath}
\normalcols

The conditions of Lemma~\ref{LMContour} can be readily verified.
Conditions (a) and (b) are immediate from the construction.  Since
both cycles of $\alpha$ have the same length, condition (c) is
satisfied.  The staircase pattern ensures conditions (d)--(e) are
satisfied.  Hence we have indeed constructed a contour $\C$.  (In
later constructions we will not explicitly describe how
Lemma~\ref{LMContour} is satisfied.)

\textit{Case I(c)}: $d$ is even and $m$ is odd,
whence \[\alpha=(12\cdots d)(d+1\cdots 2d)\cdots((m-1)d+1\cdots md).\]
Suppose, seeking a contradiction, that $L=L(i,j)$ is a Latin square of
order $n$ such that $\alpha\in\autm(L)$.

Consider the first row of the $d\times d$ block of $L$ with top left
corner $(dr+1,ds+1)$, for some $r$ and $s$ satisfying $0 \leq r \leq
m-1$ and $0 \leq s \leq m-1$. Then, calculating modulo $d$, since
$\alpha$ is an automorphism of $L$, we have
\begin{align*}
  \sum_{t=1}^{d} L(dr+t,ds+1) &=\sum_{t=0}^{d-1} L(dr+d-t,ds+1)
= \sum_{t=0}^{d-1} L\big(\alpha^{-t}(dr+d),\alpha^{-t}(ds+1+t)\big)\\
  &=\sum_{t=0}^{d-1} \alpha^t\big(L(dr+d,ds+1+t)\big)
\equiv \sum_{t=0}^{d-1} \Big(L(dr+d,ds+1+t)+t\Big).
\end{align*}
Using this congruence and the fact that every row and column sums to
$n(n+1)/2$, we obtain
\begin{align*}
    m\frac{n(n+1)}{2} &= \sum_{s=0}^{m-1}\sum_{i=1}^n L(i,ds+1)
= \sum_{r=0}^{m-1}\sum_{s=0}^{m-1} \sum_{t=1}^{d} L(dr+t,ds+1)\\
    &\equiv \sum_{r=0}^{m-1}\sum_{s=0}^{m-1} \sum_{t=0}^{d-1}
         \big(L(dr+d,ds+1+t) +t\big)
    = m^2 \sum_{t=0}^{d-1} t+\sum_{r=0}^{m-1}\sum_{j=1}^{n} L(dr+d,j)\\
    &= m^2 \frac{(d-1)d}{2}+m\frac{n(n+1)}{2},
\end{align*}
and hence $\half m^2 (d-1)d\equiv 0 \pmod d$. This contradicts our
assumption that $d$ is even and $m$ is odd.

\textit{Case II}: $\alpha$ has at least one fixed point, so $n>md$. If
$n>2md$ then $\alpha\notin\automorphs{n}$ by Theorem~\ref{TMMM}.  If
$n\le 2md$, Theorem~\ref{TDiagCyclic} guarantees the existence of a
Latin square of order $n$ that admits the automorphism
$\omega=(12\cdots (md))(md+1)\cdots(2md)$ and so
$\omega^m\in\automorphs{n}$.  Since $\omega^m$ has the same cycle
structure as $\alpha$, Lemma~\ref{LMAutConj} implies
$\alpha\in\automorphs{n}$.
\end{proof}

\begin{corol}\label{COSmithKerby}
Suppose $2^a$ is the largest power of $2$ dividing $n$, where $a \geq
1$.  Suppose $\theta=(\alpha,\beta,\gamma) \in \I_n$ is such that the
length of each cycle in $\alpha$, $\beta$ and $\gamma$ is divisible by
$2^a$.  Then $\theta \not\in \autotops{n}$.
\end{corol}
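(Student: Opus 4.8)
The plan is to argue by contradiction, first isolating a subsquare on which all three components become homogeneous, and then reducing to \tref{THEqualCycLen} via the cycle-structure invariance of $\Delta$. Write $n=2^am$ with $m$ odd, and for $x\in\mathbb N$ let $v_2(x)$ denote the exponent of $2$ in $x$. Suppose for contradiction that $\theta=(\alpha,\beta,\gamma)\in\autt(L)$ for some Latin square $L$ of order $n$. Since every cycle length of $\alpha$, $\beta$ and $\gamma$ is divisible by $2^a$, each such length $\ell$ satisfies $v_2(\ell)\ge a$; in particular none of $\alpha,\beta,\gamma$ has a fixed point.

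The first step is to apply \tref{THStrongLCM} with the strongly $\lcm$-closed set $\Lambda=\{x\in\mathbb N:v_2(x)\le a\}$ (this is $\Lambda(f)$ for $f(1)=a$ and $f(i)=\infty$ when $i>1$; the identity $v_2(\lcm(x,y))=\max(v_2(x),v_2(y))$ makes it strongly $\lcm$-closed). Because all cycle lengths have $v_2\ge a$, the set $R_\Lambda$ consists exactly of the points lying in cycles of $\alpha$ whose length $\ell$ has $v_2(\ell)=a$, i.e.\ $\ell=2^a\cdot(\text{odd})$; likewise for $C_\Lambda$ and $S_\Lambda$. Writing each cycle length of $\alpha$ as $2^ah$ and using $\sum h=m$ odd, an odd number of these $h$ are themselves odd, so $\alpha$ has an odd (in particular positive) number of cycles with $v_2(\ell)=a$, and the same holds for $\beta$ and $\gamma$. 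Hence $R_\Lambda,C_\Lambda,S_\Lambda$ are all nonempty, and \tref{THStrongLCM} yields a subsquare $M$ of some order $N$ admitting the autotopism $\theta_M=(\alpha_M,\beta_M,\gamma_M)$, where $\alpha_M,\beta_M,\gamma_M$ are the restrictions of $\alpha,\beta,\gamma$ to their cycles with $v_2(\ell)=a$. Summing an odd number of lengths of the form $2^a\cdot(\text{odd})$ gives $N=2^am'$ with $m'$ odd, so $2^a$ is exactly the power of $2$ dividing $N$.

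The second step homogenizes the cycle lengths. Let $q$ be the (odd) least common multiple of the odd parts of all cycle lengths occurring in $\alpha_M,\beta_M,\gamma_M$, and consider $\theta_M^{\,q}\in\autt(M)$. A cycle of length $2^ao$ with $o$ odd and $o\mid q$ satisfies $\gcd(2^ao,q)=o$, so it breaks into $o$ cycles of length $2^a$; therefore $\alpha_M^{\,q}$, $\beta_M^{\,q}$ and $\gamma_M^{\,q}$ consist solely of cycles of length $2^a$, each having exactly $N/2^a=m'$ of them and no fixed points. In particular the three components share the cycle structure $(2^a)^{m'}$, so the invariance of $\Delta$ under equal cycle structures (a consequence of \lref{LMAutConj}) gives $\Delta\big((\alpha_M^{\,q},\alpha_M^{\,q},\alpha_M^{\,q})\big)=\Delta(\theta_M^{\,q})>0$, i.e.\ $\alpha_M^{\,q}\in\automorphs{N}$. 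But $\alpha_M^{\,q}$ has $m'$ (odd) nontrivial cycles all of the even length $2^a$ and no fixed points, so \tref{THEqualCycLen} gives $\alpha_M^{\,q}\notin\automorphs{N}$, a contradiction. Hence $\theta\notin\autotops{n}$.

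I expect the main obstacle to be the two-stage reduction: a single power of $\theta$ cannot simultaneously equalize cycle lengths having different $2$-adic valuations, so the subsquare extraction via \tref{THStrongLCM}—which pins the valuation to exactly $a$ and forces $2^a$ to divide $N$ exactly—must precede the homogenizing power $q$. The routine bookkeeping, namely checking that $\Lambda$ is strongly $\lcm$-closed, that the count of valuation-$a$ cycles is odd, and that raising to the odd power $q$ leaves the odd count $m'$ intact, then falls into place.
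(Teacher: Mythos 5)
Your proof is correct and takes essentially the same approach as the paper: the paper also applies \tref{THStrongLCM} with the strongly $\lcm$-closed set $\{s:2^{a+1}\nmid s\}$ (identical to your $\Lambda$), extracts a subsquare of order $2^a b$ with $b$ odd, and then raises $\theta_M$ to the odd part of its order (your $q$) to obtain components consisting of an odd number of $2^a$-cycles, contradicting \tref{THEqualCycLen}. Your write-up merely spells out a few details the paper leaves implicit (nonemptiness of $R_\Lambda$, $C_\Lambda$, $S_\Lambda$ and the oddness of the number of valuation-$a$ cycles).
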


\begin{proof}
Suppose $L$ is a Latin square of order $n$ that admits the autotopism
$\theta$.  Define the strongly $\lcm$-closed set $S=\{s \in
\mathbb{N}: 2^{a+1} \text{ does not divide } s\}$.
Theorem~\ref{THStrongLCM} implies that $L$ contains a subsquare $M$
that admits an autotopism $\theta_M$ whose components have cycle
lengths that are divisible by $2^a$, but indivisible by $2^{a+1}$.
Hence the order of $\theta_M$ is $2^a x$ for some odd $x \geq 1$.

The order of $M$ is $2^a b$ for some odd $b \geq 1$ (otherwise
$2^{a+1}$ divides $n$).  Also, $M$ admits the autotopism
$(\theta_M)^x$.  But the components of $(\theta_M)^x$ each consist of
$b$ disjoint $2^a$-cycles, so Theorem~\ref{THEqualCycLen} implies that
$(\theta_M)^x \not\in \autt(2^a b)$, giving a contradiction.
\end{proof}

Kerby and Smith \cite{KerbySmith2010} independently obtained the case
of Corollary~\ref{COSmithKerby} when $\theta$ is an automorphism.  The
special case when all cycles have length two can be found in
the proof of \cite[Lemma~4]{McKayMeynertMyrvold2007}.

\section{Some useful partial contours}\label{Sc:AdditionalBlockPatterns}

There are several situations that arise repeatedly while constructing
a Latin square with a prescribed automorphism. We now give a sequence
of lemmas that handle these situations.  As usual, we assume that
$\alpha\in S_n$ is canonical and has nontrivial cycles of lengths
$d_1\ge d_2\ge \cdots \ge d_m$ and $d_\infty\ge 0$ fixed points.

First we look at adding fixed points to an existing construction.

\begin{lemm}\label{LMaddfix}
Suppose $\alpha\in\autm(L)$ for some Latin square $L$ of order $n$.  Let
$\mu$ be the minimum, over $k\in[m]$, of the number of occurrences of
the leading symbol $t_k$ in block $M_{kk}$.
Then for $0\le \nu\le\mu$ there exists a Latin square $L'$ of order
$n+\nu$ that admits an automorphism $\alpha'$ with cycles of lengths
$d_1$, $d_2,\ldots, d_m$ and ${d_\infty+\nu}$ fixed points.
\end{lemm}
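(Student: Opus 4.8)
The plan is to add the $\nu$ new fixed points by carving out room inside the diagonal blocks $M_{kk}$. Concretely, I would start from the given Latin square $L$ with $\alpha\in\autm(L)$ and its contour $\C$. For each $k\in[m]$ the leading symbol $t_k$ occurs in $M_{kk}$ at least $\mu$ times by the definition of $\mu$; I would select $\nu$ of these diagonal blocks (one per new fixed point, or more carefully, arrange to free up $\nu$ cell orbits spread across the diagonal blocks) and replace one occurrence of $t_k$ in each chosen orbit by a new fixed-point symbol. The underlying idea is that a fixed point $\infty$ sits in a $1$-cycle, so by \lref{LMLcmAB} it may legally occupy any cell; the constraint is purely that each new symbol must form a full transversal-like pattern compatible with \lref{LMContour}, and a cell orbit currently carrying $t_k$ in $M_{kk}$ has exactly the right shape (a single $d_k\times d_k$ diagonal orbit) to be overwritten by a fixed point without disturbing conditions (a)--(c).

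Next I would make the enlargement precise. The new permutation $\alpha'$ acts on $[n+\nu]$, agreeing with $\alpha$ on $[n]$ and fixing the $\nu$ new points $n+1,\ldots,n+\nu$; its nontrivial cycles are exactly $\alpha_1,\ldots,\alpha_m$ and it has $d_\infty+\nu$ fixed points, as required. I would construct the order-$(n+\nu)$ contour $\C'$ by bordering $\C$ with $\nu$ new rows and $\nu$ new columns indexed by the new fixed points, and then specify where every symbol goes. In the original body, wherever I deleted an occurrence of $t_k$ from orbit in $M_{kk}$ I instead place one of the new fixed-point symbols; in the new border rows and columns I place the displaced leading symbols $t_k$ together with the remaining new fixed points so that each row and column again contains every symbol the correct number of times. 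The block diagram bookkeeping of equation \eref{EQBlock1} is what I would use to check that the counts balance: removing one copy of $t_k$ from $M_{kk}$ and reinserting it in the $\infty$-blocks $M_{k\infty},M_{\infty k}$, while the new fixed points fill $M_{\infty\infty}$ and the freed diagonal cells, keeps all row- and column-sums of the block diagram correct.

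The verification that $\C'$ is a genuine contour reduces to checking the five conditions of \lref{LMContour} for $\alpha'$. Conditions (c)--(e) are the easy ones: any new fixed point lies in a $1$-cycle, so $c=1$ makes the $\lcm$ condition (c) automatic and the congruence conditions (d)--(e) vacuous, and the surviving leading symbols $t_k$ still satisfy (c)--(e) because their cycle lengths are unchanged. Conditions (a) and (b) are where care is needed: I must ensure that placing fixed points into the freed orbits and into the bordering blocks produces a partial Latin square (no symbol repeated in a row or column) and that every block still contains exactly $\gcd$ of its dimensions many leading symbols in distinct cell orbits. I expect the main obstacle to be precisely this placement in the border rows/columns -- arranging the $\nu$ new fixed points and the $\nu$ relocated leading symbols across the $d_k\times(d_\infty+\nu)$ and $(d_\infty+\nu)\times d_k$ rectangular blocks and the $(d_\infty+\nu)\times(d_\infty+\nu)$ corner block so that conditions (a) and (b) hold simultaneously. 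I would handle these rectangular and fixed-point blocks with the rectangular-block and odd-pattern constructions of \sref{Ss:BuildingBlocks}, exploiting that a $1$-cycle imposes no parity difficulty, so the awkward even-length cases never intrude on the newly added material.
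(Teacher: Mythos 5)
Your overall strategy---freeing cell orbits in diagonal blocks, writing the new fixed-point symbols into them, and pushing the displaced symbols into a new border row and column---is exactly the paper's prolongation argument. But your accounting of how many orbits must be freed, and where, is wrong, and that accounting is the crux of the lemma. You propose to ``select $\nu$ of these diagonal blocks (one per new fixed point)'', or alternatively to free ``$\nu$ cell orbits spread across the diagonal blocks''. Neither suffices. By \lref{LMLcmAB}, a cell whose row lies in a $d_k$-cycle and whose column is a fixed point ($1$-cycle) must contain a symbol from a cycle of length exactly $d_k$; consequently a new fixed symbol $n+s$ can never sit in the border columns within the rows of $\alpha_k$, and within those rows it can only sit in columns belonging to cycles of length $d_k$. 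Since $n+s$ must appear once in every one of the $d_k$ rows of $\alpha_k$, for \emph{every} $k\in[m]$, it needs a full orbit's worth of cells in (essentially) every diagonal block $M_{kk}$, not just in one selected block and not ``spread across'' the blocks. With your literal construction, $n+s$ would simply be absent from the rows of every non-selected cycle, so $L'$ would not be Latin. The correct count is $\nu$ freed orbits in \emph{each} $M_{kk}$, i.e.\ $\nu m$ orbits altogether, and this is precisely where the hypothesis $\nu\le\mu$ enters: $t_k$ occurs at least $\mu\ge\nu$ times in $M_{kk}$, each orbit of $M_{kk}$ contains $t_k$ at most once, so these occurrences lie in $\nu$ distinct orbits, each of which carries every symbol of $\alpha_k$ exactly once---which is also exactly what makes the displaced symbols fill the new row and column without repetition.

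Two further points of execution. The border placement is not a construction problem needing the patterns of Section~\ref{Ss:BuildingBlocks}: the symbol displaced from cell $(x,y)$ of a freed orbit is forced into cells $(x,n+s)$ and $(n+s,y)$, and the Latin property of these border entries is inherited from $L$ (cells of distinct freed orbits sharing a row of $L$ hold distinct symbols); the only genuine freedom is the corner block $M'_{\infty\infty}$, which may be any Latin square of order $d_\infty+\nu$ on $\alpha_\infty\cup\{n+1,\ldots,n+\nu\}$. Also, the paper sidesteps all multi-symbol bookkeeping by proving only $\nu=1$ and inducting: one prolongation reduces the number of occurrences of $t_k$ in $M_{kk}$ by exactly one, so the hypothesis survives with $\mu-1$. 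Your all-at-once version can be repaired along the lines above, but as written the key quantitative step fails.
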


\begin{proof}
We use a procedure known as \emph{prolongation} to construct
$L'$ from $L$.  We assume $\nu=1$; the remainder of the
lemma follows by induction.  We may also assume that $\alpha$ and
$\alpha'$ are canonical. We use $M_{ij}$ to denote a block of $L$
and $M'_{ij}$ to denote a block of $L'$.

For each $k \in [m]$, pick an entry $(i,j,t_k)$ in the block $M_{kk}$.
Define $L'(\alpha^r(i),n+1)=L'(n+1,\alpha^r(j))=\alpha^r(t_k)$ and
$L'(\alpha^r(i),\alpha^r(j))=n+1$ for $0 \leq r \leq d_k-1$.
Then $M'_{\infty\infty}$ can be
chosen arbitrarily from the Latin squares of order $d_\infty+1$ on the
symbols $\alpha_\infty\cup\{n+1\}$.
The remainder of $L'$ is the same as $L$.
\end{proof}

When attempting to construct a contour $\C$ for a Latin square that
admits the automorphism $\alpha$, Lemma~\ref{LMContour} implies that
we may proceed block-by-block, in any order.  We imagine that we build
$\C$ from a series of partial contours, such that at each stage we
introduce a new block $M_{ij}$.  It is sufficient to check that, at
each stage, the introduced block $M_{ij}$ does not contradict
conditions (a)--(e) of Lemma~\ref{LMContour} with respect to itself
and the other extant blocks in those rows and columns.

For the next lemma, we consider the case of when the nontrivial cycles
of $\alpha$ have distinct lengths, and give sufficient conditions for
the existence of blocks $M_{ij}$ satisfying the conditions of
Lemma~\ref{LMContour}, when either $i=\infty$ or $j=\infty$.

\begin{lemm}\label{LMinfrowcol}
Suppose that no two nontrivial cycles of $\alpha$ have the same length.
\begin{enumerate}
\item[(i)] If $\alpha\in\autm(L)$ and $i \in [m]$, then the block
  $M_{i\infty}$ contains one copy of the leading symbol $t_i$
  in each column and no other leading symbols.
\item[(ii)] While constructing $L$ such that $\alpha\in\autm(L)$, if
  the region $\bigcup_{j\in[m]}M_{ij}$ for some $i \in [m]$ has been
  successfully completed and contains exactly $d_i-d_\infty$ copies of
  $t_i$, then $M_{i\infty}$ can also be completed.
\end{enumerate}
Similar statements (transposed) hold for the blocks $M_{\infty i}$.
\end{lemm}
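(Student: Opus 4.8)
The plan is to handle the two parts in turn: part (i) follows directly from \lref{LMLcmAB} together with the distinct-length hypothesis, and part (ii) is then a short counting argument followed by a routine verification of the conditions in \lref{LMContour}.

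For part (i), observe that $M_{i\infty}$ is a $d_i\times d_\infty$ block whose rows run through the $d_i$-cycle $\alpha_i$ and whose columns are individual fixed points, that is, $1$-cycles. (If $d_\infty=0$ the block is empty and there is nothing to prove, so assume $d_\infty\ge1$.) I would apply \lref{LMLcmAB} to an arbitrary entry of $M_{i\infty}$ whose symbol lies in a $c$-cycle, taking $a=d_i$ and $b=1$: the requirement $\lcm(d_i,1)=\lcm(1,c)$ forces $c=d_i$. Since no two nontrivial cycles of $\alpha$ have equal length, $\alpha_i$ is the unique $d_i$-cycle, so every symbol occurring in $M_{i\infty}$ belongs to $\alpha_i$, and hence $t_i$ is the only leading symbol that can appear. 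Each fixed-point column is a single cell orbit of length $d_i$ (the row index runs through all of $\alpha_i$ while the column index stays fixed), so it carries exactly one leading symbol; this must be one copy of $t_i$, which proves (i).

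For part (ii), I would first record what (i) demands of the block being completed: $M_{i\infty}$ must receive exactly one copy of $t_i$ in each of its $d_\infty$ columns, and no other leading symbols. By hypothesis the nontrivial region $\bigcup_{j\in[m]}M_{ij}$ is already completed and contains exactly $d_i-d_\infty$ copies of $t_i$. Because a Latin square holds each symbol at most once per row, these copies lie in $d_i-d_\infty$ distinct rows of $\alpha_i$, leaving precisely $d_\infty$ rows of $\alpha_i$ free of $t_i$. The construction is then immediate: fix any bijection between these $d_\infty$ free rows and the $d_\infty$ fixed-point columns, and place one copy of $t_i$ in each matched cell of $M_{i\infty}$.

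It remains to check \lref{LMContour} for this placement, both within $M_{i\infty}$ and against the extant blocks in the same rows and columns. Conditions (a) and (b) hold because there is exactly one $t_i$ per column and, by the choice of free rows, exactly one $t_i$ per row of $\alpha_i$ overall; condition (c) holds since $t_i$ lies in a $d_i$-cycle inside a $d_i\times1$ block and $\lcm(d_i,1)=\lcm(1,d_i)=\lcm(d_i,d_i)=d_i$. Condition (d) is automatic, as the rows of $\alpha_i$ are $d_i$ consecutive integers and hence pairwise incongruent modulo $\gcd(d_i,d_i)=d_i$, while condition (e) is vacuous because distinct fixed-point columns lie in distinct $1$-cycles. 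The statement for $M_{\infty i}$ follows by interchanging the roles of rows and columns. The only substantive step is the count in part (ii)---matching the $d_\infty$ rows forced free by the hypothesis against the $d_\infty$ columns forced occupied by part (i)---after which the bijection and the verification of \lref{LMContour} are routine.
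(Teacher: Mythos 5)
Your proposal is correct and follows essentially the same route as the paper's proof: part (i) via Lemma~\ref{LMLcmAB} applied to the fixed-point columns of $M_{i\infty}$, each of which is a single cell orbit, and part (ii) by matching the $d_\infty$ rows of $\alpha_i$ still missing $t_i$ to the $d_\infty$ fixed-point columns and then checking Lemma~\ref{LMContour}. The only difference is that you spell out the verification of conditions (a)--(e) explicitly, where the paper simply declares it easy to check.
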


\begin{proof}
Assume that $\alpha\in\autm(L)$. Then each column of $M_{i\infty}$ is
a cell orbit and each must contain a leading symbol from a cycle of
length $\lcm(d_i,1)=d_i$, by Lemma~\ref{LMLcmAB}. By assumption $t_i$
is the only such leading symbol.

Now assume that $L$ is under construction and we wish to achieve
$\alpha\in\autm(L)$. If there are $d_i-d_\infty$ copies of $t_i$ in
$\bigcup_{j\in[m]}M_{ij}$ then there are $d_\infty$ rows where $t_i$
does not occur in any of these blocks. We can place $t_i$ in each of
these rows within $M_{i\infty}$, with one copy of $t_i$ per column and
per row, but otherwise arbitrarily. It is easy to check that the
conditions of Lemma~\ref{LMContour} will continue to be satisfied. The
transpose argument works for $M_{\infty i}$.
\end{proof}

The next result shows when it is possible to fill several subsquares
at once, provided they only overlap in the block $M_{\infty\infty}$.
Conditions for the existence of Latin squares with overlapping
subsquares of various sizes were given in
\cite{BrowningVojtechovskyWanless2009}.


\begin{lemm}\label{LMsubsq}
For $i\in[m]$, let $\lambda_i$ be the number of cycles in $\alpha$
that have length $d_i$. Let $I = \{i\in[m];\;$ there is no $j\in[m]$
such that $d_j$ is a proper divisor of $d_i\}$. For all $i\in I$, let
$\S_i=\bigcup_{a,b} M_{ab}$ over all $a,b \in \{c \in [m]:d_c=d_i\}
\cup \{\infty\}$.
\begin{enumerate}
\item[(i)] If $\alpha\in\autm(L)$, then $\S_i$ is a subsquare of $L$
  for every $i\in I$.  Hence the region $\bigcup_{i\in I}\S_i$ can be
  filled independently of the remainder of $L$.
\item[(ii)] If $\alpha\in\autm(L)$, then (a) $d_\infty\le\lambda_id_i$
  for every $i\in I$ and (b) if $d_i$ is even and $\lambda_i$ is odd
  for some $i\in I$ then $d_\infty>0$.
\item[(iii)] If $\alpha$ satisfies conditions (a)--(b) of (ii) above,
  then it is possible to fill the region $\bigcup_{i \in I} \S_i$.
\end{enumerate}
\end{lemm}

\begin{proof}
Suppose $L$ is a Latin square with $\alpha\in\autm(L)$.  For each
$i\in I$, taking $\Lambda$ as the set of divisors of $d_i$ in
Theorem~\ref{THStrongLCM} implies that $\S_i$ is a subsquare that
admits an automorphism with the cycle structure
$d_i^{\lambda_i}\tdot1^{d_\infty}$, thus proving (i).
Theorem~\ref{THEqualCycLen}, applied to each subsquare $\S_i$, now
implies (ii).

The assumptions on $\alpha$ imply that any two distinct subsquares
$\S_i$ and $\S_j$ intersect at $M_{\infty\infty}$ (which is empty if
$d_\infty=0$). If $d_\infty>0$ then \tref{THStrongLCM} with
$\Lambda=\{1\}$ implies that $M_{\infty\infty}$ is a subsquare.
Crucially, if $\S_i$ exists then we can replace its subsquare
$M_{\infty\infty}$ by any other subsquare on the same symbols, without
disrupting the automorphism that $\S_i$ is required to have. Hence, if
the individual $\S_i$ exist, we may assume they share the same
subsquare $M_{\infty\infty}$, which is the only place that they
overlap.  Thus $\bigcup_{i\in I}\S_i$ can be constructed if and only
if all the individual $\S_i$ can be constructed, which happens if and
only if the conditions of (ii) are satisfied, by \tref{THEqualCycLen}.
\end{proof}

We next look at a useful way to construct a partial contour.
In a block with $g$ cell orbits, a {\em transversal} is a set of $g$
cells that lie in different rows, different columns and different cell
orbits.  We will now give a simple but important condition for the
existence of a transversal.

\begin{lemm}\label{LMgapnec}
Let $S=\{(r_k,c_k)\}_{1 \leq k \leq g}$ be a transversal of a
$d_i\times d_j$ block $M_{ij}$ with $i,j \in [m]$, where
$g=\gcd(d_i,d_j)$.  Then
\begin{equation}\label{EQSumModG}
\sum_{k=1}^g c_k-\sum_{k=1}^g r_k \equiv \delta_g\pmod g
\end{equation}
where
\begin{equation*}
    \delta_g=\sum_{k\in\mathbb{Z}_g}k \equiv
        \begin{cases}
            0\pmod g&\text{if $g$ is odd},\\
            \half g\pmod g&\text{if $g$ is even}.
        \end{cases}
\end{equation*}
\end{lemm}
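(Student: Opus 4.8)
The plan is to parametrize the cells of the block $M_{ij}$ by their offsets from the leading row and column, to observe that a cell's orbit under $\alpha$ is determined by the \emph{difference} of these offsets modulo $g$, and then to read off the congruence simply by summing over the transversal.

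First I would write each transversal cell as $r_k=t_i+a_k$ and $c_k=t_j+b_k$, where $a_k\in\{0,1,\ldots,d_i-1\}$ and $b_k\in\{0,1,\ldots,d_j-1\}$ are the offsets within $M_{ij}$. Since $\alpha$ is canonical, the rows of $M_{ij}$ are $t_i,t_i+1,\ldots,t_i+d_i-1$ and similarly for the columns, and $\alpha$ acts on the indices by the cyclic increments $a\mapsto a+1\pmod{d_i}$ and $b\mapsto b+1\pmod{d_j}$. Hence the cell $(t_i+a,t_j+b)$ has cell orbit $\{(t_i+((a+r)\bmod d_i),\,t_j+((b+r)\bmod d_j)):r\ge 0\}$.

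The heart of the argument is the following observation: two cells with offsets $(a,b)$ and $(a',b')$ lie in the same cell orbit if and only if there is an $r$ with $a'\equiv a+r\pmod{d_i}$ and $b'\equiv b+r\pmod{d_j}$, and by the Chinese Remainder Theorem such an $r$ exists precisely when $a'-a\equiv b'-b\pmod{g}$, i.e.\ when $b-a\equiv b'-a'\pmod g$. Thus the map sending a cell to the residue $(b-a)\bmod g$ is constant on orbits and separates distinct orbits, so it gives a bijection between the $g=\gcd(d_i,d_j)$ cell orbits of $M_{ij}$ and the residues in $\mathbb{Z}_g$. Everything after this point is bookkeeping; the only place needing care is invoking the CRT consistency condition correctly, rather than tacitly assuming $d_i$ and $d_j$ are coprime.

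Finally, since a transversal meets all $g$ cell orbits exactly once, the residues $(b_k-a_k)\bmod g$ range over all of $\mathbb{Z}_g$. Computing modulo $g$,
\[
\sum_{k=1}^{g}c_k-\sum_{k=1}^{g}r_k
= g(t_j-t_i)+\sum_{k=1}^{g}(b_k-a_k)
\equiv \sum_{l\in\mathbb{Z}_g} l=\delta_g\pmod g,
\]
which is \eqref{EQSumModG}. The stated evaluation of $\delta_g$ then follows from $\delta_g=g(g-1)/2$: for odd $g$ this is a multiple of $g$, while for even $g$ it equals $(g/2)(g-1)\equiv -g/2\equiv\half g\pmod g$. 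I do not anticipate any genuine obstacle beyond getting the orbit-to-residue correspondence stated precisely.
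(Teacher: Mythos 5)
Your proof is correct and follows essentially the same route as the paper: you label each cell by its column-minus-row difference modulo $g$, note that a transversal meets each of the $g$ cell orbits exactly once so the labels exhaust $\mathbb{Z}_g$, and sum. The only difference is that you justify via the generalized CRT the claim (which the paper asserts without proof) that this labeling is constant on orbits and separates distinct orbits.
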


\begin{proof}
Label each cell of $M_{ij}$ with its column index minus its row index,
modulo $g$.  The elements of $S$ belong to different cell orbits of
$\alpha$ if and only if their labels are distinct.  Since there are
$g$ cell orbits, summing the labels yields \eqref{EQSumModG}, since
every cell orbit is represented once in $S$.
\end{proof}

Lemma~\ref{LMgapnec} is a standard argument on transversals; variants
of it have been used, for example, in \cite{EW} and \cite{WW}.

Let $N$ be the smallest submatrix containing $S$ in
Lemma~\ref{LMgapnec}. It turns out that the necessary condition in
Lemma~\ref{LMgapnec} is also sufficient in several cases that will
prove important to us later. These cases involve a situation where the
rows and columns of $N$ are contiguous within $M_{ij}$, except
possibly for one gap, which either splits the rows and columns of $N$
in half, or separates one column and one row from the remaining
columns and rows of $N$.  In the following result, $e$ is the number
of rows and columns in one of the two parts of $N$, $h_1$ is the
vertical gap (between rows) and $h_2$ is the horizontal gap (between
columns).

\begin{lemm}\label{LMgapsuff}
Suppose that $N$ is a $g\times g$ submatrix of a $d_i\times d_j$ block
$M$, where $g=\gcd(d_i,d_j)$. Suppose that for some integers $r$, $c$,
$e$, $h_1$, $h_2$ the submatrix $N$ is formed by the rows
\[\{r-e+1,r-e+2,\ldots,r\} \cup \{r+h_1+1,r+h_1+2,\ldots,r+h_1+g-e\}\]
and columns
\[\{c-e+1,c-e+2,\ldots,c\} \cup \{c+h_2+1,c+h_2+2,\ldots,c+h_2+g-e\}\]
of $M$. Suppose further that $e=g-1$ or $g=2e$. Then for $M$ to have
a transversal inside $N$ it is necessary and sufficient that
\begin{equation}\label{Eq:gapstuff}
    (h_1-h_2)e\equiv\delta_g\pmod g.
\end{equation}
\end{lemm}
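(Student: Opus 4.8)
The plan is to prove the two directions separately; necessity falls out of \lref{LMgapnec} almost immediately, while the content lies in the sufficiency construction. For necessity, note first that \emph{any} transversal of $N$ meets each of the $g$ rows and each of the $g$ columns of $N$ exactly once, so in \eqref{EQSumModG} the sums $\sum r_k$ and $\sum c_k$ do not depend on the transversal: they are simply the sums of all row indices and all column indices of $N$. Summing the two contiguous runs in each direction, the triangular-number constants produced by the consecutive indices are the same for rows and for columns and cancel in the difference, leaving $\sum c_k-\sum r_k\equiv(g-e)(h_2-h_1)\equiv(h_1-h_2)e\pmod g$ (using $g-e\equiv-e$). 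Since \lref{LMgapnec} forces $\sum c_k-\sum r_k\equiv\delta_g\pmod g$, we obtain \eqref{Eq:gapstuff}. This half uses neither hypothesis $e=g-1$ nor $g=2e$.

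For sufficiency I would index the rows and columns of $N$ by positions $0,1,\dots,g-1$, with position $p$ lying in the first part when $p<e$ and in the second part otherwise. A choice of one cell per row is a permutation $\sigma$ of the positions, and a direct computation shows that, up to an additive constant independent of $p$, the cell in row-position $p$ carries the label $\ell(p)\equiv(\sigma(p)-p)+h_2[\sigma(p)\ge e]-h_1[p\ge e]\pmod g$, where $[\,\cdot\,]$ is $1$ if its argument is true and $0$ otherwise. By \lref{LMgapnec} the chosen cells lie in distinct cell orbits precisely when $\{\ell(p):0\le p<g\}$ is a complete residue system modulo $g$, so the goal is to exhibit, for each allowed shape of $N$, a permutation $\sigma$ whose labels are complete exactly when \eqref{Eq:gapstuff} holds.

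When $g=2e$ (so $g$ is even) I take the full antidiagonal $\sigma(p)=g-1-p$. Because it sends the first part of the rows onto the second part of the columns and vice versa, the gap terms survive and the labels split into two arithmetic progressions of step $2$, namely $(h_2-1)+2\mathbb{Z}$ and $(-1-h_1)+2\mathbb{Z}$, each covering all residues of one parity; their union is complete exactly when $h_1-h_2$ is odd, which is \eqref{Eq:gapstuff} since here $\delta_g=\half g$. When $e=g-1$ the second parts are a single row and a single column, and the device is to place one cell at their intersection (label $h_2-h_1$) and to fill the contiguous $(g-1)\times(g-1)$ subblock with a transversal. Since any transversal of that subblock is a permutation of $\{0,\dots,g-2\}$, the integer sum of its labels is $0$, so its $g-1$ distinct labels are forced to be exactly $\mathbb{Z}_g\setminus\{\delta_g\}$; completeness of the whole configuration then reduces to $h_2-h_1\equiv\delta_g$, which is again \eqref{Eq:gapstuff} for $e=g-1$. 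To produce the subblock transversal I would use $\rho\mapsto g-2-\rho$ for $g$ odd (labels $g-2-2\rho$, running through all nonzero residues) and $\rho\mapsto 2\rho\bmod(g-1)$ for $g$ even (a permutation as $g-1$ is odd, with labels $\equiv\rho+[\rho\ge g/2]$, i.e.\ $\mathbb{Z}_g\setminus\{\half g\}$).

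The main obstacle is sufficiency for even $g$: a contiguous $g\times g$ block admits no transversal at all, as the discussion of the even pattern shows, so one must exploit the gaps to break the degeneracy, and the delicate point is to pick a pattern whose label multiset is provably a \emph{complete} residue system, not merely one of correct sum. Identifying the forced missing residue $\delta_g$ of the subblock (for $e=g-1$) and verifying the parity split of the two progressions (for $g=2e$) are exactly the places where the hypothesis on the shape of $N$ is used and where \eqref{Eq:gapstuff} upgrades from necessary to sufficient.
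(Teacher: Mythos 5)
Your proposal is correct and follows essentially the same route as the paper: necessity is the identical computation via Lemma~\ref{LMgapnec}, and sufficiency is proved by the same case split ($g=2e$ versus $e=g-1$) using explicitly constructed transversals whose labels (column minus row, modulo $g$) are checked to form a complete residue system. The differences are minor--in the case $g=2e$ you use the full antidiagonal of $N$ where the paper uses two antidiagonals lying within the two parts, and in the case $e=g-1$ your observation that \emph{any} transversal of the contiguous $(g-1)\times(g-1)$ subblock is forced to miss exactly the residue $\delta_g$ replaces the paper's direct verification of which labels its explicit cells carry; both yield the same conclusion.
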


\begin{proof}
For the necessity we apply Lemma~\ref{LMgapnec} and find that
\begin{align*}
\delta_g&=\sum_{i=1}^e\big((c-e+i)-(r-e+i)\big)
+\sum_{i=1}^{g-e}\big((c+h_2+i)-(r+h_1+i)\big)\\
&=g(c-r)+(g-e)(h_2-h_1)\\
&\equiv(h_1-h_2)e\pmod g.
\end{align*}
To prove sufficiency, first suppose that $g=2e$. In this case
$\delta_g=\half g=e$ and $(h_1-h_2)e\equiv\delta_g\pmod g$ implies
that $h_1-h_2$ is odd.  Let $T$ be the set of cells
\[\{(r-e+i,c+1-i):1 \leq i \leq e\}\cup\{(r+h_1+i,c+h_2+e+1-i):1 \leq i \leq e\}.\]
It is immediate that $T$ has a representative from every row and
column of $N$. Moreover, the labels on the cells in $T$ (as defined in
the proof of \lref{LMgapnec}) are $\{c-r+1+e-2i:1 \leq i \leq
e\}\cup\{c-r+1+e+h_2-h_1-2i:1 \leq i \leq e\}=\mathbb{Z}_g$ since
$h_2-h_1$ is odd.  Hence $T$ is indeed a transversal.

Next suppose that $e=g-1$ and hence
$h_2-h_1\equiv\delta_g\pmod g$. If $g$ is even then we form $T$ from the cells
\begin{align*}
\big\{(r+h_1+1,c+h_2+1)\big\}
&\cup\big\{(r-\half g-i+2,c-g+i+1):1 \leq i \leq \half g\big\}\\
&\cup\big\{(r-i+1,c-\half g+i+1):1 \leq i \leq \half g-1\big\}.
\end{align*}
The labels on these cells are
\[
\{c-r+h_2-h_1\}\cup
\{c-r-\half g+2i-1:1 \leq i \leq \half g\}\cup
\big\{c-r-\half g+2i:1 \leq i \leq \half g-1\}.
\]
Since $h_2-h_1\equiv\delta_g \equiv \half g\pmod g$, these labels cover every
possibility modulo $g$.  Similarly, it is easy to see that $T$ covers
every row and column of $N$.

It remains to show sufficiency when $e=g-1$ and $g$ is odd. In this case
we simply take $T$ to consist of the cells
\[
\big\{(r+h_1+1,c+h_2+1)\big\}\cup
\big\{(r-i+1,c-g+i+1):1 \leq i \leq e\big\}.
\]
The first of these has label $c-r+h_2-h_1\equiv c-r\pmod g$, while the others
have labels $\{c-r+2i:1 \leq i \leq e\}$, which gives us a complete set.
\end{proof}

We remark that Lemma~\ref{LMgapsuff} can also be applied when the rows
or columns are consecutive, by choosing $h_1=0$ or $h_2=0$,
respectively.

\section{Automorphisms with two nontrivial cycles}\label{Sc:Autm2Cycles}

In this section we give necessary and sufficient conditions for
membership in $\automorphs{n}$ for those $\alpha \in S_n$ that consist
of precisely two nontrivial cycles, of lengths $d_1$ and $d_2$.

\begin{theo}[Automorphisms with two nontrivial cycles]\label{THTwoCycles}
Suppose $\alpha\in S_n$ consists of a $d_1$-cycle, a $d_2$-cycle and
$d_{\infty}$ fixed points. If $d_1=d_2$ then $\alpha\in\automorphs{n}$
if and only if $0 \leq d_{\infty} \leq 2d_1$. If $d_1>d_2$ then
$\alpha\in\automorphs{n}$ if and only if all the following conditions
hold:
\begin{enumerate}
\item[(a)] $d_2$ divides $d_1$,
\item[(b)] $d_2 \geq d_\infty$,
\item[(c)] if $d_2$ is even then $d_\infty>0$.
\end{enumerate}
\end{theo}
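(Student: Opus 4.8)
The plan is to dispatch the equal-length case via \tref{THEqualCycLen} and to treat $d_1>d_2$ by proving necessity of (a)--(c) directly and sufficiency by an explicit contour construction. When $d_1=d_2=:d$, \tref{THEqualCycLen} applies with $m=2$: if $d_\infty>0$ it gives $\alpha\in\automorphs n$ iff $n\le 2md=4d$, i.e.\ $d_\infty\le 2d$, and if $d_\infty=0$ membership holds unconditionally because $m=2$ is even. Together this is $0\le d_\infty\le 2d_1$, as required, so from now on I assume $d_1>d_2$.

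For necessity, condition (a) comes from \lref{LMLcmAB}: any entry of the nonempty $d_1\times d_2$ block $M_{12}$ has its symbol in a $c$-cycle with $\lcm(d_1,d_2)=\lcm(d_2,c)=\lcm(d_1,c)$, and since the only available cycle lengths are $d_1,d_2,1$, the choice $c=d_2$ would force $d_1\mid d_2$ (impossible), leaving $c\in\{1,d_1\}$, each of which gives $\lcm(d_1,c)=d_1=\lcm(d_1,d_2)$, i.e.\ $d_2\mid d_1$. Conditions (b) and (c) I would obtain together by applying \tref{THStrongLCM} with $\Lambda$ the set of divisors of $d_2$: since $d_1\nmid d_2$, this yields $R_\Lambda=C_\Lambda=S_\Lambda=\alpha_2\cup\alpha_\infty$, hence a subsquare $M$ of order $d_2+d_\infty$ on which $\alpha$ restricts to a single $d_2$-cycle together with $d_\infty$ fixed points. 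By \tref{TDiagCyclic} this cycle structure lies in $\automorphs{d_2+d_\infty}$ only if $d_\infty=0$ with $d_2$ odd, or $\lceil\half(d_2+d_\infty)\rceil\le d_2<d_2+d_\infty$; the first alternative is impossible when $d_2$ is even, and the second forces $0<d_\infty\le d_2$. In every case $d_\infty\le d_2$ (giving (b)) and $d_2$ even implies $d_\infty>0$ (giving (c)).

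For sufficiency I assume (a)--(c) and build a contour block by block, guided by the block diagram of \fref{FILStruct}, which is forced once $d_1>d_2$. Because $d_2\mid d_1$ and $d_1>d_2$, the index set $I$ of \lref{LMsubsq} equals $\{2\}$, and its hypotheses (ii)(a)--(b) are exactly our (b) and (c); thus \lref{LMsubsq}(iii) fills the subsquare on $\alpha_2\cup\alpha_\infty$. It remains to fill $M_{11}$, $M_{12}$, $M_{21}$ and finally $M_{1\infty}$, $M_{\infty1}$. By \fref{FILStruct} the blocks $M_{12},M_{21}$ carry only the leading symbol $t_1$ (one per cell orbit, $d_2$ orbits each), while $M_{11}$ has $d_1-d_2-d_\infty$ orbits carrying $t_1$, $d_2$ carrying $t_2$, and $d_\infty$ carrying fixed points; all these counts are nonnegative since $d_1\ge 2d_2\ge d_2+d_\infty$ by (a) and (b). I would place these leading symbols with the patterns of \sref{Ss:BuildingBlocks}: an odd or even pattern along the diagonal orbits of $M_{11}$, selected by the offsets $\offset{1}{i}$ according to the parity of $d_1$, and antidiagonal/rectangular placements of $t_1$ in $M_{12},M_{21}$, viewing each as $q=d_1/d_2$ stacked $d_2\times d_2$ sub-blocks. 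Once $M_{11}\cup M_{12}$ holds exactly $d_1-d_\infty$ copies of $t_1$, \lref{LMinfrowcol}(ii) completes $M_{1\infty}$ and (transposed) $M_{\infty1}$, and the conditions of \lref{LMContour} are verified blockwise as in earlier proofs.

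The hard part will be the placement of leading symbols in the even-length blocks. When $d_2$ is even, no transversal can hit each row, column and cell orbit of a $d_2\times d_2$ sub-block simultaneously (the even-pattern obstruction), so inside $M_{11}$ and $M_{12}$ I must build the required transversal with a controlled gap; this is precisely \lref{LMgapsuff}, whose congruence \eref{Eq:gapstuff} I expect to be the decisive condition. I also need the global bookkeeping forced by \lref{LMContour}(d)--(e) with $a=c=d_1$, namely that the copies of $t_1$ across the whole $\alpha_1$-band occupy distinct rows and columns; aligning the parity-sensitive partial contours of $M_{11}$ and $M_{12}$ to achieve this, while using the slack $d_2-d_\infty\ge0$ from \eref{EQBlock1}, is where the real care lies. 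Everything else follows from the lemmas and patterns already assembled.
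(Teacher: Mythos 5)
Your handling of the case $d_1=d_2$ and your necessity arguments are correct and essentially the paper's own: condition (a) follows from \lref{LMLcmAB} applied to $M_{12}$, and (b), (c) follow from restricting to the subsquare on $\alpha_2\cup\alpha_\infty$ --- you invoke \tref{THStrongLCM} together with \tref{TDiagCyclic} where the paper cites \lref{LMsubsq}, but that lemma is proved by exactly this combination, so the content is identical.

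The sufficiency half, however, is a plan rather than a proof, and it breaks precisely where you admit ``the real care lies.'' Your stated expectation is that \lref{LMgapsuff} will settle the even case, but it provably cannot, at least not with the $M_{11}$ you describe. With $M_{11}$ filled along the cells $(i,t_2-i-\O_{1,i})$ ($t_2$ in $d_2$ consecutive rows, a fixed point in row $d_1$), the rows of the $\alpha_1$ band still available for $t_1$ in $M_{12}$ are a contiguous run of $d_2$ rows together with the single isolated row $d_1$, and all $d_2$ columns of $M_{12}$ must be used. Feeding the admissible configurations into \eref{Eq:gapstuff} with $g=d_2$ even: using the contiguous run alone gives $0\equiv\delta_g=\half g\pmod g$, which fails; using $e=g-1$ of those rows plus row $d_1$ gives a congruence satisfiable only when $d_1/d_2$ is odd (or in the degenerate case $d_2=2$). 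So for $d_2\ge 4$ with $d_1/d_2$ even --- e.g.\ $d_1=16$, $d_2=4$ --- no choice satisfies \lref{LMgapsuff}, and the block $M_{12}$ cannot be completed by that lemma. This is exactly why the paper applies \lref{LMgapsuff} only to $M_{21}$ and constructs $M_{12}$ by hand, with two different placements according to the parity of $d_1/d_2$, using a cell set whose rows have \emph{two} gaps and hence lie outside the hypotheses of \lref{LMgapsuff}. That bespoke construction, the verification of \lref{LMContour} across the whole $\alpha_1$ band, and the reduction to $d_\infty\in\{0,1\}$ via \lref{LMaddfix} (which you also bypass by attempting general $d_\infty$ directly) constitute the substance of the theorem's proof; redesigning $M_{11}$ so that some lemma applies might be possible, but your proposal neither does this nor carries out any alternative, so the sufficiency direction remains unproved.
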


\begin{proof}
The case $d_1=d_2$ is resolved by Theorem~\ref{THEqualCycLen}, so
assume $d_1>d_2$. Suppose $L$ is a Latin square with
$\alpha\in\autm(L)$.  The block diagram of $L$ must be as in
Figure~\ref{FILStruct}, as explained in
Section~\ref{Ss:BlockDiagrams}. The necessity of conditions (b), (c)
follows from Lemma \ref{LMsubsq}. To see that (a)
is necessary, observe that every symbol in $M_{12}$ belongs to the
$d_1$-cycle $\alpha_1$. Then $d_1=\lcm(d_1,d_1)=\lcm(d_1,d_2)$ by
Lemma~\ref{LMLcmAB}, so $d_2$ must divide $d_1$. (Note that we now have $n=d_1+d_2+d_\infty \le d_1+2d_2\le 2d_1$, so $d_1\geq \left\lceil \half n \right\rceil$, as also demanded by Lemma \ref{LMMaxSize}.)

For the rest of the proof assume that conditions (a)--(c) hold. Our
task is to find a Latin square $L$ such that $\alpha\in\autm(L)$. We
construct such a square by means of a contour $\C=\C(i,j)$ that
satisfies the conditions of Lemma~\ref{LMContour} for the least
possible $d_\infty$.  Examples with larger $d_\infty$ can then be
found using Lemma~\ref{LMaddfix}.

\textit{Case I}: $d_2$ is odd.   Here $d_\infty=0$.  First
we specify the block $M_{11}$:
\[
\C(i,t_2-i-\O_{1,i})=
\begin{cases}
t_1&\text{if }1\le i\le d_1-d_2,\\
t_2&\text{if }d_1-d_2< i\le d_1.\\
\end{cases}
\]
The block $M_{22}$ can be completed by Lemma~\ref{LMsubsq}, and the
blocks $M_{12}$ and $M_{21}$ can be completed by applying
Lemma~\ref{LMgapsuff}.


The contours for $d_1=6$, $d_2=3$, $d_\infty=0$ and $d_1=9$, $d_2=3$,
$d_\infty=0$ are illustrated in Figure~\ref{Fg:93}.

\begin{figure}[htb]
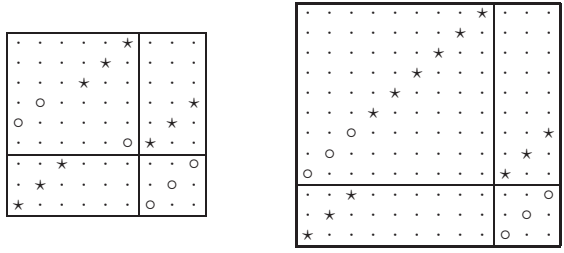

\verynarrowcols
\begin{displaymath}
\begin{array}{|cccccc|ccc|}
    \hline
    \xxx{5}&\xa&\xxx{3}\\
    \xxx{4}&\xa&\xxx{4}\\
    \xxx{3}&\xa&\xxx{5}\\
    \xx&\xb&\xxx{6}&\xa\\
    \xb&\xxx{6}&\xa&\xx\\
    \xxx{5}&\xb&\xa&\xxx{2}\\
    \hline
    \xxx{2}&\xa&\xxx{5}&\xb\\
    \xx&\xa&\xxx{5}&\xb&\xx\\
    \xa&\xxx{5}&\xb&\xxx{2}\\
    \hline
\end{array}
\qquad\qquad
\begin{array}{|ccccccccc|ccc|}
    \hline
    \xxx{8}&\xa&\xxx{3}\\
    \xxx{7}&\xa&\xxx{4}\\
    \xxx{6}&\xa&\xxx{5}\\
    \xxx{5}&\xa&\xxx{6}\\
    \xxx{4}&\xa&\xxx{7}\\
    \xxx{3}&\xa&\xxx{8}\\
    \xxx{2}&\xb&\xxx{8}&\xa\\
    \xx&\xb&\xxx{8}&\xa&\xx\\
    \xb&\xxx{8}&\xa&\xxx{2}\\
    \hline
    \xxx{2}&\xa&\xxx{8}&\xb\\
    \xx&\xa&\xxx{8}&\xb&\xx\\
    \xa&\xxx{8}&\xb&\xxx{2}\\
    \hline
\end{array}
\end{displaymath}
\verynormalcols
\caption{\label{Fg:93}Contours for $d_1=6$, $d_2=3$, $d_\infty=0$
and $d_1=9$, $d_2=3$, $d_\infty=0$.}
\end{figure}

\textit{Case II}: $d_2$ is even (and hence $d_1$ is also even).  Here
$d_\infty=1$.  We begin with $M_{11}$:
\[
\C(i,t_2-i-\O_{1,i})=
\begin{cases}
t_1&\text{if }1\le i\le\half d_1-d_2\text{ or }\half d_1<i<d_1,\\
t_2&\text{if }\half d_1-d_2< i\le \half d_1,\\
n&\text{if }i=d_1.\\
\end{cases}
\]
The blocks $M_{22}$, $M_{2\infty}$, $M_{\infty2}$ and
$M_{\infty\infty}$ can be completed by Lemma~\ref{LMsubsq}. Once we
complete $M_{12}\cup M_{21}$, we can complete $L$ by Lemma
\ref{LMinfrowcol}. The block $M_{21}$ can be completed using
Lemma~\ref{LMgapsuff} with $e=g/2$.

Finally, we fill the block $M_{12}$ with $\C(d_1,t_3-1)=t_1$, and if
$d_1/d_2$ is even, we let
\begin{align*}
  & \C(\half d_1-d_2+i, t_3-1-i)=t_1 & \text{ for } & 1 \leq i < \half d_2-1,\\
  & \C(\half d_1 - \half d_2+i,t_2+\half d_2-i)=t_1 & \text{ for } & 1 \leq i \leq \half d_2,
\end{align*}
while if $d_1/d_2$ is odd, we let
\begin{align*}
    & \C(\half d_1-d_2+i,t_2+\half d_2-i) = t_1 & \text{ for } & 1\leq i\leq \half d_2,\\
    & \C(\half d_1 - \half d_2 + i, t_3 - 1 - i) = t_1 & \text{ for } & 1\leq i<\half d_2-1.
\end{align*}

These partial contours are illustrated in Figure~\ref{Fg:25} for $d_1=16$,
$d_2=4$, $d_\infty=1$, and $d_1=18$, $d_2=6$, $d_\infty=1$. Focusing
on the ``missing'' cell in the even pattern of the block $M_{12}$
(shaded dark in Figure~\ref{Fg:25}), it is not hard to see why the
construction works. The shaded entry is in column $t_3-1$ when
$d_1/d_2$ is even, and it is in column $t_3-1-\half d_2$ when
$d_1/d_2$ is odd.
\end{proof}

\begin{figure}[htb]
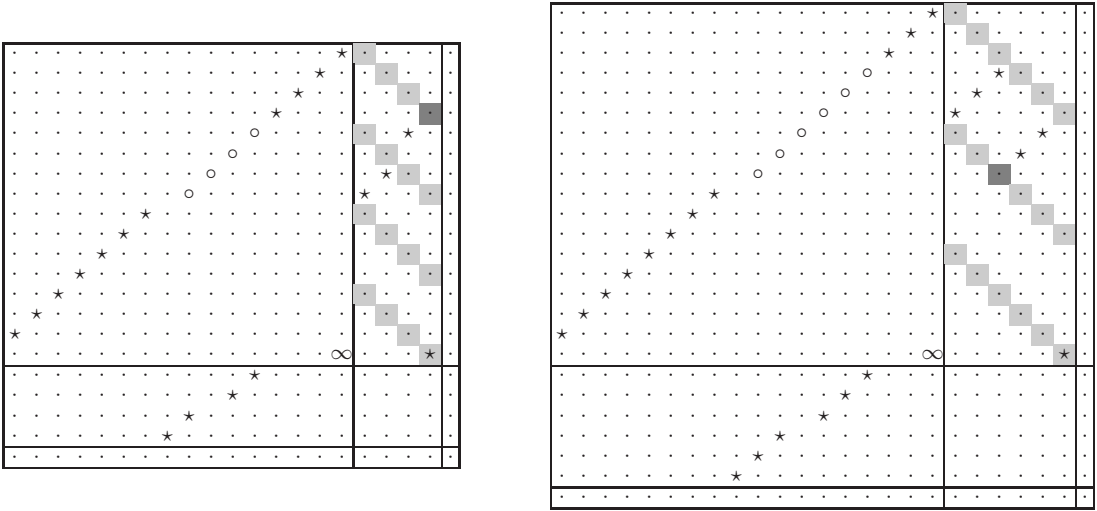

\verynarrowcols
\begin{displaymath}
\begin{array}{|cccccccccccccccc|cccc|c|}
    \hline
    \xxx{15}&\xa&\c\xx&\xxx{4}\\
    \xxx{14}&\xa&\xxx{2}&\c\xx&\xxx{3}\\
    \xxx{13}&\xa&\xxx{4}&\c\xx&\xxx{2}\\
    \xxx{12}&\xa&\xxx{6}&\d\xx&\xx\\
    \xxx{11}&\xb&\xxx{4}&\c\xx&\xx&\xa&\xxx{2}\\
    \xxx{10}&\xb&\xxx{6}&\c\xx&\xxx{3}\\
    \xxx{9}&\xb&\xxx{7}&\xa&\c\xx&\xxx{2}\\
    \xxx{8}&\xb&\xxx{7}&\xa&\xxx{2}&\c\xx&\xx\\
    \xxx{6}&\xa&\xxx{9}&\c\xx&\xxx{4}\\
    \xxx{5}&\xa&\xxx{11}&\c\xx&\xxx{3}\\
    \xxx{4}&\xa&\xxx{13}&\c\xx&\xxx{2}\\
    \xxx{3}&\xa&\xxx{15}&\c\xx&\xx\\
    \xxx{2}&\xa&\xxx{13}&\c\xx&\xxx{4}\\
    \xx&\xa&\xxx{15}&\c\xx&\xxx{3}\\
    \xa&\xxx{17}&\c\xx&\xxx{2}\\
    \xxx{15}&\xi&\xxx{3}&\c\xa&\xx\\
    \hline
    \xxx{11}&\xa&\xxx{9}\\
    \xxx{10}&\xa&\xxx{10}\\
    \xxx{8}&\xa&\xxx{12}\\
    \xxx{7}&\xa&\xxx{13}\\
    \hline
    \xxx{21}\\
    \hline
    \end{array}
\qquad\qquad
\begin{array}{|cccccccccccccccccc|cccccc|c|}
    \hline
    \xxx{17}&\xa&\c\xx&\xxx{6}\\
    \xxx{16}&\xa&\xxx{2}&\c\xx&\xxx{5}\\
    \xxx{15}&\xa&\xxx{4}&\c\xx&\xxx{4}\\
    \xxx{14}&\xb&\xxx{5}&\xa&\c\xx&\xxx{3}\\
    \xxx{13}&\xb&\xxx{5}&\xa&\xxx{2}&\c\xx&\xxx{2}\\
    \xxx{12}&\xb&\xxx{5}&\xa&\xxx{4}&\c\xx&\xx\\
    \xxx{11}&\xb&\xxx{6}&\c\xx&\xxx{3}&\xa&\xxx{2}\\
    \xxx{10}&\xb&\xxx{8}&\c\xx&\xx&\xa&\xxx{3}\\
    \xxx{9}&\xb&\xxx{10}&\d\xx&\xxx{4}\\
    \xxx{7}&\xa&\xxx{13}&\c\xx&\xxx{3}\\
    \xxx{6}&\xa&\xxx{15}&\c\xx&\xxx{2}\\
    \xxx{5}&\xa&\xxx{17}&\c\xx&\xx\\
    \xxx{4}&\xa&\xxx{13}&\c\xx&\xxx{6}\\
    \xxx{3}&\xa&\xxx{15}&\c\xx&\xxx{5}\\
    \xxx{2}&\xa&\xxx{17}&\c\xx&\xxx{4}\\
    \xx&\xa&\xxx{19}&\c\xx&\xxx{3}\\
    \xa&\xxx{21}&\c\xx&\xxx{2}\\
    \xxx{17}&\xi&\xxx{5}&\c\xa&\xx\\
    \hline
    \xxx{14}&\xa&\xxx{10}\\
    \xxx{13}&\xa&\xxx{11}\\
    \xxx{12}&\xa&\xxx{12}\\
    \xxx{10}&\xa&\xxx{14}\\
    \xxx{9}&\xa&\xxx{15}\\
    \xxx{8}&\xa&\xxx{16}\\
    \hline
    \xxx{25}\\
    \hline
\end{array}
\end{displaymath}
\verynormalcols
\caption{\label{Fg:25}Partial contours for $d_1=16$, $d_2=4$, $d_\infty=1$,
and $d_1=18$, $d_2=6$, $d_\infty=1$.}
\end{figure}

\section{Automorphisms with three nontrivial cycles}\label{Sc:Autm3Cycles}

In this section we characterize automorphisms $\alpha$ of Latin
squares with precisely three nontrivial cycles of lengths
$d_1\ge d_2\ge d_3$.

\begin{theo}[Automorphisms with three nontrivial cycles]\label{THThreeCyc}
Suppose that $\alpha\in S_n$ has precisely three nontrivial cycles of
lengths $d_1\ge d_2\ge d_3$.  Let $d_\infty$ be the number of fixed
points of $\alpha$.  Then $\alpha\in\autm(n)$ if and only if one of
the following cases holds:
\begin{enumerate}
  \item $d_1=d_2=d_3$ and (a) $d_\infty \leq 3d_1$ and (b) if $d_1$ is
    even then $d_\infty\ge 1$,
  \item $d_1>d_2=d_3$ and (a) $d_1 \geq 2d_2+d_\infty$, (b) $d_2$
    divides $d_1$, (c) $d_\infty \leq 2d_2$, and (d) if $d_2$ is even
    and $d_1/d_2$ is odd then $d_\infty>0$,
  \item $d_1=d_2>d_3$ and (a) $d_3$ divides $d_1$, (b) $d_\infty\le
    d_3$, and (c) if $d_3$ is even then $d_\infty>0$,
  \item $d_1>d_2>d_3$ and (a) $d_1 = \lcm(d_2,d_3)$, (b) $d_3\ge
    d_\infty$, and (c) if $d_1$ is even then $d_\infty>0$,
  \item $d_1>d_2>d_3$ and (a) $d_3$ divides $d_2$ which divides $d_1$,
  (b) $d_3\ge d_\infty$, and (c) if $d_3$ is even then $d_\infty>0$.
\end{enumerate}
\end{theo}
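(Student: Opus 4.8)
The plan is to treat this as a case analysis governed by the pattern of equalities and divisibilities among $d_1,d_2,d_3$, proving necessity and sufficiency separately within each of the five regimes: $d_1=d_2=d_3$ (Case~1), $d_1>d_2=d_3$ (Case~2), $d_1=d_2>d_3$ (Case~3), and $d_1>d_2>d_3$ (Cases~4 and~5). Case~1 is immediate, since $\alpha$ then has three cycles of a common length, so \tref{THEqualCycLen} (with $m=3$) applies verbatim and yields exactly conditions (a), (b). For the remaining cases the engine is the same throughout, so I would set it up once: \lref{LMLcmAB} tells me which cycle lengths $c$ may supply the symbols of each block $M_{ij}$, and \tref{THStrongLCM} lets me cut out the subsquare sitting on any strongly $\lcm$-closed set of cycle lengths; I then feed these subsquares to the already-proved small-cycle theorems.

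For necessity I would first read off the forced divisibility conditions from block fillability. The decisive block is $M_{23}$: by \lref{LMLcmAB} its symbols lie in cycles of length $c$ with $\lcm(d_2,d_3)=\lcm(d_2,c)=\lcm(d_3,c)$, and checking the only candidates $c\in\{d_1,d_2,d_3,1\}$ shows $M_{23}$ can be filled only if $d_1=\lcm(d_2,d_3)$ (Case~4) or $d_3\mid d_2$ (which, once $M_{12}$ forces $d_2\mid d_1$, is Case~5). The same mechanism applied to $M_{12}$ and $M_{13}$ supplies the remaining divisibility conditions 2(b) and 3(a) and completes 4(a) and 5(a). For the numeric and parity conditions I would extract the relevant subsquares with \tref{THStrongLCM} (taking $\Lambda$ to be the divisors of $d_2$ or of $d_3$) and apply: \lref{LMMaxSize} to the subsquare of order $2d_2+d_\infty$ for the size bound 2(a); \tref{THEqualCycLen} to the two-equal-cycle subsquare for 2(c); \tref{TDiagCyclic} to the single-cycle subsquare on the $d_3$-cycle and fixed points for 3(b)--(c), 5(b)--(c) and 4(b); and, crucially for 4(c), \tref{TDiagCyclic} to \emph{both} the $d_2$- and $d_3$-subsquares, so that $d_1=\lcm(d_2,d_3)$ being even forces one of $d_2,d_3$ even and hence $d_\infty>0$. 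Condition 2(d) is the one genuinely extra obstruction: when $d_2$ is even, $d_1/d_2$ is odd and $d_\infty=0$, every nontrivial cycle length carries the full $2$-part of $n$, so \cref{COSmithKerby} rules out $\alpha$.

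For sufficiency I would build an explicit contour for the least admissible $d_\infty$ (namely $d_\infty=0$ when permitted by the parity conditions, otherwise $d_\infty=1$) and then invoke \lref{LMaddfix} to install any larger number of fixed points up to the stated bound. The skeleton of each construction is dictated by \lref{LMsubsq}, which lets me realise the subsquares on the divisor-closed sets of cycle lengths independently, and by \lref{LMinfrowcol}, which disposes of the $M_{i\infty}$ and $M_{\infty i}$ strips once the corresponding rows are balanced. The remaining off-diagonal blocks $M_{12},M_{13},M_{23}$ I would populate with the odd, even and staircase patterns of \sref{Ss:BuildingBlocks}, using \lref{LMgapsuff} (with $e=g-1$ or $g=2e$) to certify the existence of the needed transversals and thereby verify conditions (d)--(e) of \lref{LMContour}; conditions (a)--(c) of \lref{LMContour} are built into the patterns and the block diagram.

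The main obstacle I anticipate is the sufficiency for the fully distinct Cases~4 and~5 when some $d_i$ is even. As the discussion of the even pattern warns, a single symbol cannot be placed exactly once per row, column and cell orbit of an even block, so such blocks must be filled with a mixture of two leading symbols and the placements checked against condition (e) of \lref{LMContour} across several blocks simultaneously; getting \lref{LMgapsuff} to apply uniformly, by choosing the gap parameters $h_1,h_2,e$ so that $(h_1-h_2)e\equiv\delta_g\pmod{g}$ in every parity subcase, is where the real bookkeeping lies. A secondary subtlety is arranging the base contour to carry at least $d_3$ copies of each diagonal leading symbol, so that \lref{LMaddfix} reaches the full range $0\le d_\infty\le d_3$ of fixed points allowed in Cases~3, 4 and~5.
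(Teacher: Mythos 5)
Your overall architecture coincides with the paper's: the same five-case split, necessity via \lref{LMLcmAB} and subsquares extracted by \tref{THStrongLCM} fed into \tref{THEqualCycLen}/\tref{TDiagCyclic}, and sufficiency via explicit contours built from the block patterns, \lref{LMgapsuff}, \lref{LMsubsq}, \lref{LMinfrowcol}, and finally \lref{LMaddfix}. Some of your necessity shortcuts are valid and even tidier than the paper's: \cref{COSmithKerby} does dispose of 2(d) (under those hypotheses the $2$-part of every cycle length equals the $2$-part of $n$), where the paper instead passes to $\alpha^k$ for $k$ the largest odd divisor of $d_1$; and \lref{LMMaxSize} applied to the subsquare of order $2d_2+d_\infty$ gives 2(a), where the paper reads it off the block diagram.

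However, there is a genuine gap in your necessity argument for 4(a). Checking single-cell admissibility with \lref{LMLcmAB}, block by block, does \emph{not} force $d_1=\lcm(d_2,d_3)$. When $d_3\nmid d_2$, the check on $M_{23}$ only shows that its symbols come from the $d_1$-cycle, hence that $d_1\mid\lcm(d_2,d_3)$ and $\lcm(d_2,d_3)=\lcm(d_1,d_2)=\lcm(d_1,d_3)$ --- strictly weaker than 4(a). Concretely, take $d_1=15$, $d_2=10$, $d_3=6$: every block passes your test ($M_{12}$ may take symbols from the $6$-cycle, $M_{13}$ from the $10$-cycle, $M_{23}$ from the $15$-cycle, and each diagonal block from its own cycle or from fixed points), yet $d_1=15\ne 30=\lcm(d_2,d_3)$ and $6\nmid 10$, so this cycle structure satisfies neither 4(a) nor 5(a) and must be excluded. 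The missing ingredient is the multiplicity count that the paper encodes in the block diagram of \fref{FILStruct3CycB}/\fref{FILStruct3Cyc}: each symbol of $\alpha_2$ occurs exactly $d_1$ times in the rows of $\alpha_1$ but at most once per column of any block; by \lref{LMLcmAB} it cannot occur in $M_{12}$ or $M_{1\infty}$, and it can occur at most $d_3<d_1$ times in $M_{13}$, so it must occur in $M_{11}$, which forces $d_2\mid d_1$; symmetrically $d_3\mid d_1$, and only then does $d_1\mid\lcm(d_2,d_3)$ upgrade to equality. (Equivalently, \lref{LMLCMPerm}, which packages exactly this counting as a permanent condition, rules the example out.) Note that your 5(a) survives: once $d_3\mid d_2$ is assumed, the candidate $c=d_3$ for $M_{12}$ also fails, so the single-block check there really does force $d_2\mid d_1$.

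A secondary remark: your sufficiency half is the paper's plan in outline only. You correctly identify the tools and the real difficulty (even cycle lengths, where a block cannot carry one symbol per row, column and cell orbit), but the bulk of the paper's proof consists precisely of executing those constructions --- the wrap-around even patterns, the parity offsets $\O_{j,i}$, and the ad hoc relocation of colliding leading symbols by multiples of $d_3$ --- and verifying \eref{Eq:gapstuff} in each parity subcase; none of that is discharged by the plan as written.
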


We will prove each case of Theorem~\ref{THThreeCyc} in a sequence of
propositions in the remainder of this section.  The case $d_1=d_2=d_3$
is covered by Theorem~\ref{THEqualCycLen}, so it remains to discuss
the cases when $d_1>d_2$ and/or $d_2>d_3$.

\subsection{The case $d_1>d_2=d_3$}

\begin{lemm}\label{Lm:Horse}
For any $d_1$, $d_2$ such that $d_1=2d_2$, every isotopism with cycle
structure $(d_1,d_2^2,d_1)$ belongs to $\autt(d_1)$.
\end{lemm}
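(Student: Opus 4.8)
The plan is to exhibit a single Latin square of order $d_1$ admitting an autotopism of the prescribed cycle structure. Since the value of $\Delta$ depends only on the cycle structure of an isotopism (established in \sref{Sc:Equivalence} via \lref{LMAutConj}), producing one such square will immediately show that \emph{every} isotopism with cycle structure $(d_1,d_2^2,d_1)$ lies in $\autotops{d_1}$, which is exactly the claim.

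Write $n=d_1=2d_2$ and take $L$ to be the Cayley table of the cyclic group $\mathbb{Z}_n$, that is $L(i,j)\equiv i+j\pmod n$ on the symbol set $\mathbb{Z}_n$. For $x\in\mathbb{Z}_n$ let $\tau_x$ denote the translation $y\mapsto y+x$. A one-line check against \eqref{EQAutotopism} shows that $(\tau_a,\tau_b,\tau_{a+b})$ is an autotopism of $L$ for all $a,b$, since $\tau_{a+b}\big(L(i,j)\big)=i+j+a+b=L(i+a,j+b)$. It then remains only to choose the shifts $a,b$ so that this translation triple realizes the required cycle structure.

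The key observation is that $\tau_x$ has $\gcd(x,n)$ cycles, each of length $n/\gcd(x,n)$. Hence $\tau_x$ is a single $d_1$-cycle exactly when $\gcd(x,n)=1$, and $\tau_x$ consists of two $d_2$-cycles exactly when $\gcd(x,n)=2$ (as then the cycle length is $n/2=d_2$). I would therefore fix $b=2$, so that $\tau_2$ always splits $\mathbb{Z}_n$ into its even and odd residues and gives two $d_2$-cycles because $n$ is even, and then seek $a$ with $\gcd(a,n)=\gcd(a+2,n)=1$; this makes $\alpha=\tau_a$ and $\gamma=\tau_{a+2}$ the two required $d_1$-cycles.

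The only real content is the existence of such an $a$, which is a short application of the Chinese Remainder Theorem and is the main (indeed the sole) obstacle. Since $n$ is even, $a$ must be odd; and for each odd prime $p\mid n$ the conditions $p\nmid a$ and $p\nmid a+2$ forbid only the residues $0$ and $-2$ modulo $p$, leaving at least $p-2\geq 1$ admissible classes. Choosing $a$ odd and avoiding these two residues modulo each odd prime divisor of $n$ yields an $a$ coprime to $n$ with $a+2$ also coprime to $n$, which completes the construction; everything else is forced by the group structure of $L$.
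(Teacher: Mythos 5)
Your proof is correct, but it takes a genuinely different route from the paper's. The paper stays inside its contour framework: it writes down an explicit partial Latin square of order $d_1$, placing the leading symbol $t_1$ in cells $(2i-1,i)$ and $(2i,d_2+i)$ for $1\le i\le d_2$, and lets the prescribed autotopism $((1\cdots d_1),(1\cdots d_2)(d_2+1\cdots d_1),(1\cdots d_1))$ propagate these entries to a full Latin square (Figure~\ref{Fg:Horse}); invariance of $\Delta$ under conjugation (\lref{LMAutConj}) then gives the statement for every isotopism with that cycle structure. You instead argue algebraically in the Cayley table of $\mathbb{Z}_n$ with $n=d_1$: the translation triples $(\tau_a,\tau_b,\tau_{a+b})$ are autotopisms, $\tau_x$ decomposes into $\gcd(x,n)$ cycles of length $n/\gcd(x,n)$, and your CRT argument correctly produces an odd $a$ with $\gcd(a,n)=\gcd(a+2,n)=1$, so that $(\tau_a,\tau_2,\tau_{a+2})$ realizes the cycle structure $(d_1,d_2^2,d_1)$; the same appeal to \lref{LMAutConj} finishes. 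Your approach avoids verifying any contour or Latin-square conditions, echoes the technique the paper itself uses in the proof of \tref{THOneFixedPerm}, and generalizes readily to other cycle structures realizable by translations of a cyclic group. What the paper's proof buys, by contrast, is an explicit combinatorial pattern rather than a bare existence statement: the proof of Proposition~\ref{P:ThreeCycles2} reuses ``the pattern of Lemma~\ref{Lm:Horse}'' verbatim to fill the blocks $M_{12}\cup M_{13}$ (and its transpose for $M_{21}\cup M_{31}$), a role that your non-constructive choice of $a$ could not play.
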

\begin{proof}
We specify a contour for a Latin square $L$ of order $d_1$ that admits
the autotopism \[((1\cdots d_1),(1\cdots d_2)(d_2+1\cdots
d_1),(1\cdots d_1))\] by assigning $L(2i-1,i)=L(2i,d_2+i)=t_1$ for
$1\leq i \leq d_2$, as illustrated in Figure \ref{Fg:Horse} when
$d_1=6$.
\end{proof}

\begin{figure}[htb]
\narrowcols
\begin{displaymath}
\begin{array}{|ccc|ccc|}
    \hline
    \xa&\xxx{5}\\
    \xxx{3}&\xa&\xxx{2}\\
    \xx&\xa&\xxx{4}\\
    \xxx{4}&\xa&\xx\\
    \xxx{2}&\xa&\xxx{3}\\
    \xxx{5}&\xa\\
    \hline
\end{array}
\end{displaymath}
\normalcols
\caption{\label{Fg:Horse}An example of the construction in the proof of
Lemma~\ref{Lm:Horse}.}
\end{figure}

\begin{prop}[Automorphisms with three nontrivial cycles of lengths $d_1>d_2=d_3$]\label{P:ThreeCycles2}
Suppose that $\alpha\in S_n$ has precisely three nontrivial cycles of
lengths $d_1>d_2=d_3$. Let $d_\infty$ be the number of fixed points of
$\alpha$. Then $\alpha\in\autm(n)$ if and only if all of the following
conditions hold:
\begin{enumerate}
\item[(a)] $d_1 \geq 2d_2+d_\infty$,
\item[(b)] $d_2$ divides $d_1$,
\item[(c)] $d_\infty \leq 2d_2$,
\item[(d)] if $d_2$ is even and $d_1/d_2$ is odd then $d_\infty>0$.
\end{enumerate}
\end{prop}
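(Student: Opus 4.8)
The plan is to split the argument into necessity and sufficiency, exactly along the pattern already established for Theorem~\ref{THTwoCycles}. For necessity, suppose $\alpha\in\autm(L)$ for some Latin square $L$. Condition (b) should follow from Lemma~\ref{LMLcmAB} applied to the block $M_{12}$ (equivalently $M_{13}$): since $d_1>d_2=d_3$, every symbol appearing in $M_{12}$ must lie in a $c$-cycle with $\lcm(d_1,d_2)=\lcm(d_1,c)$, and because only the $\alpha_1$-symbols have a cycle length whose lcm with $d_1$ equals $d_1$, we get $\lcm(d_1,d_2)=d_1$, i.e.\ $d_2\mid d_1$. Conditions (a), (c) and (d) should come from Lemma~\ref{LMsubsq}: taking $I=\{2\}$ (since $d_2=d_3$ and no $d_j$ properly divides $d_2$ unless $d_2$ is the smallest length, which it is here), the subsquare $\S_2=\bigcup M_{ab}$ over $a,b\in\{2,3,\infty\}$ admits an automorphism with cycle structure $d_2^{2}\tdot 1^{d_\infty}$. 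Part (ii) of Lemma~\ref{LMsubsq} then forces $d_\infty\le\lambda_2 d_2=2d_2$ (condition (c)), and the parity clause $d_2$ even with $\lambda_2=2$ even imposes no constraint directly---so condition (d) will instead require a separate argument. For (a), I would observe that the $\S_2$ subsquare has order $2d_2+d_\infty$, and by Lemma~\ref{LMMaxSize} a proper subsquare has order at most $\lfloor n/2\rfloor$, while $n=d_1+2d_2+d_\infty$; rearranging gives $d_1\ge 2d_2+d_\infty$.

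For the main content, condition (d), I expect necessity to require a congruence/counting argument in the spirit of Case~I(c) of Theorem~\ref{THEqualCycLen}. The idea is to sum symbol values over a suitable union of blocks in $\S_2$ when $d_2$ is even, and use the action of $\alpha$ (which shifts symbols in $\alpha_2,\alpha_3$ by a known amount modulo $d_2$) to derive a parity obstruction that can only be resolved if a fixed point is available, i.e.\ $d_\infty>0$. The subtlety is that $d_1/d_2$ odd is what makes the count fail: the contribution of the $M_{21},M_{31}$ and $M_{12},M_{13}$ blocks, which carry $\alpha_1$-symbols of the large cycle, must be tracked modulo $d_2$, and I anticipate the parity of $d_1/d_2$ entering through how many copies of each large-cycle symbol land in these mixed blocks. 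I expect \textbf{this to be the main obstacle}: isolating the exact congruence and showing it is violated precisely when $d_2$ is even, $d_1/d_2$ is odd, and $d_\infty=0$.

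For sufficiency, assume (a)--(d) hold; the task is to construct a contour $\C$ satisfying Lemma~\ref{LMContour} for the minimal admissible $d_\infty$, and then invoke Lemma~\ref{LMaddfix} to add further fixed points up to the bound in (c). The natural strategy is to treat $\S_2$ and the remainder separately using the independence guaranteed by Lemma~\ref{LMsubsq}(i): first fill the $\S_2$ subsquare (of order $2d_2+d_\infty$, cycle structure $d_2^2\tdot 1^{d_\infty}$) using Theorem~\ref{THEqualCycLen}, whose hypotheses are exactly (c) together with the parity requirement that is ensured by (d); then handle the large cycle $\alpha_1$ and its interaction with $\alpha_2,\alpha_3$. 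For the blocks $M_{11}$ I would use the odd/even pattern of Section~\ref{Ss:BuildingBlocks} via the offset $\offset{1}{i}$, placing $\alpha_1$-symbols along the (shifted) antidiagonal. The mixed rectangular blocks $M_{12},M_{13},M_{21},M_{31}$---each of size $d_1\times d_2$ or $d_2\times d_1$ with $d_2\mid d_1$---carry only $\alpha_1$-symbols (since $d_2\mid d_1$ forces this by condition (c) of Lemma~\ref{LMContour}), and these are exactly the rectangular-block situation handled by Lemma~\ref{LMgapsuff}, choosing the gap parameters according to the parity of $d_1/d_2$. I would split the construction into the case $d_2$ odd (where $d_\infty=0$ suffices and odd patterns fill everything cleanly) and $d_2$ even (where the minimal $d_\infty$ is forced positive by (d), and even patterns together with Lemma~\ref{Lm:Horse}'s horse construction for the $M_{11}$ diagonal handle the parity), concluding each case by verifying conditions (a)--(e) of Lemma~\ref{LMContour} block by block and illustrating the result with a worked contour figure.
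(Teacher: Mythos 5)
Your treatment of conditions (a)--(c) in the necessity direction is sound: (c) follows from Lemma~\ref{LMsubsq}(ii) exactly as you say, and your derivation of (a) from Lemma~\ref{LMMaxSize} applied to the proper subsquare $\S_2$ of order $2d_2+d_\infty$ is a valid alternative to the paper's counting inside the $M_{11}$ block. The genuine gap is the necessity of (d), which you explicitly defer to an unspecified congruence argument and flag as the main obstacle. No new congruence is needed. The paper's device is to pass to a power of $\alpha$: let $k$ be the largest odd divisor of $d_1$ and consider $\alpha^k\in\autm(L)$. Writing $d_1=2^au$ with $u$ odd, condition (b) together with the assumption that $d_1/d_2$ is odd forces $d_2=2^av$ with $v$ odd, so $\alpha^k$ consists of $u+2v$ cycles, all of the same even length $2^a$, and (when $d_\infty=0$) no fixed points. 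Since $u+2v$ is odd, this contradicts Theorem~\ref{THEqualCycLen}. This reduction to the equal-cycle-length theorem is precisely the idea your proposal is missing; the direct count you envisage would have to be re-derived for mixed cycle lengths, which is exactly what the reduction avoids.

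Your sufficiency sketch also misidentifies the hard case. You split on the parity of $d_2$ and assert that for $d_2$ odd ``$d_\infty=0$ suffices and odd patterns fill everything cleanly,'' but the obstruction lives in $M_{11}$, whose side is $d_1$, not $d_2$. For example $d_1=12$, $d_2=d_3=3$, $d_\infty=0$ satisfies (a)--(d), yet $M_{11}$ is a block of even order containing no fixed symbol, and an antidiagonal placement in an even-order block necessarily doubles up one column; in the paper's Case I this is repaired by putting a fixed point in the last row, which is impossible here. The paper's correct case division is ``$d_\infty>0$ or $d_1$ odd'' versus ``$d_\infty=0$ and $d_1$ even,'' and the latter requires extra surgery: swap the contour entries of $M_{11}$ in rows $d_1$ and $d_1/2+d_3$, then slide the colliding copy of $t_1$ in $M_{13}$ down to row $d_1$, legitimate because (b) and (d) imply $d_2\mid d_1/2$ so the shift is a multiple of $d_3$. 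Two further points: the contour of $M_{11}$ must carry not only $t_1$ but also $d_2$ copies each of $t_2$ and $t_3$ plus the fixed symbols (your description places only $\alpha_1$-symbols there); and the paper fills $M_{12}\cup M_{13}$ jointly with the pattern of Lemma~\ref{Lm:Horse}, which works for either parity of $d_2$, whereas your plan to apply Lemma~\ref{LMgapsuff} to each $d_1\times d_2$ block separately fails as stated when $d_2$ is even, since with contiguous rows and columns condition \eqref{Eq:gapstuff} reads $0\equiv d_2/2\pmod{d_2}$; one would have to engineer row gaps, which you do not specify. (Also, the horse construction applies to these mixed blocks, not to the diagonal of $M_{11}$ as you suggest.)
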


\begin{proof}
Let $L$ be a Latin square such that $\alpha\in\autm(L)$. By
Lemma~\ref{LMsubsq}, $K=\bigcup_{i,j\in\{2,3,\infty\}} M_{ij}$ is a
subsquare of $L$. Since $M_{i\infty}\cup M_{\infty i}$ can be filled
later by Lemma \ref{LMinfrowcol}, it suffices to consider only the
blocks $M_{11}$, $M_{12}$, $M_{13}$, $M_{21}$ and $M_{31}$.
Figure~\ref{FIStructA} gives the part of a block diagram of $L$ that
concerns these blocks, with all entries being consequences of the fact
that $K$ is a subsquare of $L$. (Inside $K$, the block diagram of $L$
is not uniquely determined by $\alpha$, since $d_2=d_3$.)

\begin{figure}[htb]
\narrowcols
\begin{displaymath}
\begin{array}{l|l|l|l|l|}
    &\ \alpha_1&\ \alpha_2&\ \alpha_3&\ \alpha_\infty\\
    \hline
    \alpha_1&
        \begin{array}{l}\alpha_1:d_1-2d_2-d_\infty\\ \alpha_2:d_1\\ \alpha_3:d_1\\ \alpha_\infty:d_1\end{array}&
        \begin{array}{l}\alpha_1:d_2\\ \phantom{x}\\ \phantom{x} \\ \phantom{x}\end{array}&
        \begin{array}{l}\alpha_1:d_2\\ \phantom{x}\\ \phantom{x} \\ \phantom{x}\end{array}&
        \begin{array}{l}\alpha_1:d_\infty\\ \phantom{x}\\ \phantom{x} \\ \phantom{x}\end{array}\\
    \hline
    \alpha_2&\ \alpha_1:d_2&  &  & \\
    \hline
    \alpha_3&\ \alpha_1:d_2&  &  & \\
    \hline
    \alpha_\infty&\ \alpha_1:d_\infty&  &  & \\
    \hline
\end{array}
\end{displaymath}
\normalcols
\caption{\label{FIStructA}Part of the block diagram of $L$ with $d_1>d_2=d_3$.}
\end{figure}

By \tref{THEqualCycLen}, the subsquare $K$ can be filled provided (c)
holds. From the $M_{11}$ block of $L$ we deduce (a) and (b). To prove
that (d) is necessary, suppose that $d_2$ is even, $d_1/d_2$ is odd
and $d_\infty=0$. Let $k$ be the largest odd divisor of $d_1$.  Then
$\alpha^k$ consists of an odd number of cycles of the even length
$d_1/k$, contradicting \tref{THEqualCycLen}.

For the sufficiency, assume that conditions (a)--(d) are satisfied,
and let us construct a partial contour for $L\setminus K$. 

Case I: $d_\infty>0$ or $d_1$ is odd. Then we set
\begin{align*}
    &\C(i,t_2-i-\O_{1,i}) = t_2 &\text{ for } &\half d_1 - d_2 + 1 \le i \le \half d_1,\\
    &\C(i,t_2-i-\O_{1,i}) = t_3 &\text{ for } &\half d_1 + 1 \le i \le \half d_1 + d_3,
\end{align*}
and fill the remaining cells in $D=\{(i,t_2-i-\O_{1,i});\;1\le i\le d_1\}$
with the symbol $t_1$ and fixed points, making sure that a fixed point
appears in the last row when $d_1$ is even, so that the column $d_1$
does not contain two symbols $t_1$.

Note that there are at least $2d_2$ consecutive rows and columns in
$D$ not occupied by the leading symbol $t_1$. We can therefore fill
$M_{12}\cup M_{13}$ with the pattern of Lemma \ref{Lm:Horse}, and
$M_{21}\cup M_{31}$ with the transposed pattern of Lemma
\ref{Lm:Horse}. A partial contour for $L\setminus K$ in the case $d_1=6$, $d_2=d_3=2$, $d_\infty=1$ can be found in Figure \ref{Fg:Split}.

Case II: $d_\infty=0$ and $d_1$ is even. If $d_2$ is odd then it
divides $d_1/2$. If $d_2$ is even then $d_1/d_2$ is even by (d), so
$d_2$ divides $d_1/2$ again. We can modify the partial contour from
Case I as follows:

In $M_{11}$, we swap the symbols of the partial contour in rows $d_1$
and $d_1/2+d_3$ (the bottom-most occurrence of $t_3$), to prevent column
$d_1$ from containing two copies of symbol $t_1$. Since there are still
$2d_2$ consecutive columns in $D$ not occupied by $t_1$, we can fill
$M_{21}\cup M_{31}$ as above. We also leave $M_{12}$ intact, but in
$M_{13}$ we move the bottom-most symbol $t_1$ in the partial contour
down to row $d_1$, i.e., by $d_1/2-d_3$ rows. Because $d_1/2-d_3$ is a
multiple of $d_3$, \lref{LMContour} is satisfied after these changes.

A partial contour for $L\setminus K$ in the case $d_1=12$,
$d_2=d_3=3$, $d_\infty=0$ can be found in Figure \ref{Fg:Split}, with
the changes introduced in Case II highlighted.
\end{proof}

\begin{figure}[htb]
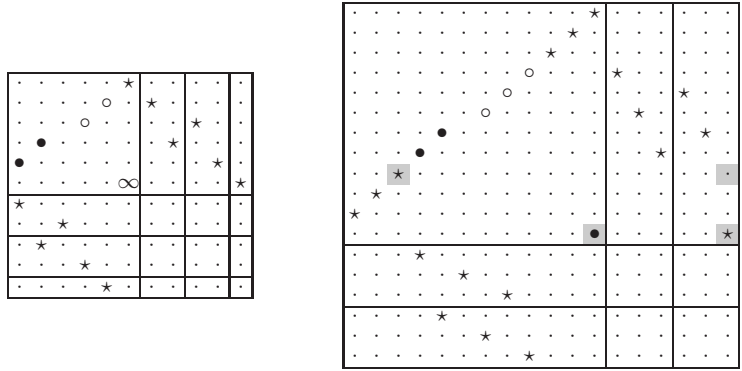

\verynarrowcols
\begin{displaymath}
\begin{array}{|cccccc|cc|cc|c|}
    \hline
    \xxx{5}&\xa&\xxx{5}\\
    \xxx{4}&\xb&\xx&\xa&\xxx{4}\\
    \xxx{3}&\xb&\xxx{4}&\xa&\xxx{2}\\
    \xx&\xc&\xxx{5}&\xa&\xxx{3}\\
    \xc&\xxx{8}&\xa&\xx\\
    \xxx{5}&\xi&\xxx{4}&\xa\\
    \hline
    \xa&\xxx{10}\\
    \xxx{2}&\xa&\xxx{8}\\
    \hline
    \xx&\xa&\xxx{9}\\
    \xxx{3}&\xa&\xxx{7}\\
    \hline
    \xxx{4}&\xa&\xxx{6}\\
    \hline
\end{array}
\qquad\qquad
\begin{array}{|cccccccccccc|ccc|ccc|}
    \hline
    \xxx{11}&\xa&\xxx{6}\\
    \xxx{10}&\xa&\xxx{7}\\
    \xxx{9}&\xa&\xxx{8}\\
    \xxx{8}&\xb&\xxx{3}&\xa&\xxx{5}\\
    \xxx{7}&\xb&\xxx{7}&\xa&\xxx{2}\\
    \xxx{6}&\xb&\xxx{6}&\xa&\xxx{4}\\
    \xxx{4}&\xc&\xxx{11}&\xa&\xx\\
    \xxx{3}&\xc&\xxx{10}&\xa&\xxx{3}\\
    \xxx{2}&\c\xa&\xxx{14}&\c\xx\\
    \xx&\xa&\xxx{16}\\
    \xa&\xxx{17}\\
    \xxx{11}&\c\xc&\xxx{5}&\c\xa\\
    \hline
    \xxx{3}&\xa&\xxx{14}\\
    \xxx{5}&\xa&\xxx{12}\\
    \xxx{7}&\xa&\xxx{10}\\
    \hline
    \xxx{4}&\xa&\xxx{13}\\
    \xxx{6}&\xa&\xxx{11}\\
    \xxx{8}&\xa&\xxx{9}\\
    \hline
\end{array}
\end{displaymath}
\verynormalcols
\caption{\label{Fg:Split}
Partial contours for $d_1=6$, $d_2=d_3=2$, $d_\infty=1$, and $d_1=12$, $d_2=d_3=3$, $d_\infty=0$.}
\end{figure}

\subsection{The case $d_1=d_2>d_3$}

\begin{prop}[Automorphisms with three nontrivial cycles of lengths $d_1=d_2>d_3$]\label{P:ThreeCycles3}
Suppose that $\alpha\in S_n$ has precisely three nontrivial cycles of
lengths $d_1=d_2>d_3$. Let $d_\infty$ be the number of fixed points of
$\alpha$. Then $\alpha\in\automorphs{n}$ if and only if all of the
following conditions hold:
\begin{enumerate}
\item[(a)] $d_3$ divides $d_1$,
\item[(b)] $d_\infty\le d_3$,
\item[(c)] if $d_3$ is even then $d_\infty>0$.
\end{enumerate}
\end{prop}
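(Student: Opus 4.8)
The plan is to mirror the structure of the proof of Proposition~\ref{P:ThreeCycles2}: first extract the forced subsquare structure to obtain necessity, then build an explicit contour for the smallest admissible number of fixed points and generate the rest by prolongation.

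For necessity, observe that $d_3$ is the shortest cycle length, so the index of $\alpha_3$ always lies in the set $I$ of Lemma~\ref{LMsubsq} (nothing can be a proper divisor of the shortest length). Hence whenever $\alpha\in\autm(L)$ the region $\S_3=M_{33}\cup M_{3\infty}\cup M_{\infty3}\cup M_{\infty\infty}$ is a subsquare carrying an automorphism of cycle structure $d_3\tdot 1^{d_\infty}$. Applying Lemma~\ref{LMsubsq}(ii), or equivalently \tref{THEqualCycLen} to $\S_3$, immediately yields $d_\infty\le d_3$ and the parity requirement (c). For condition~(a) I would examine the block $M_{13}$: because $\S_3$ is a subsquare, every column indexed by $\alpha_3$ already contains all of $\alpha_3\cup\alpha_\infty$ inside $\S_3$, so $M_{13}$ can contain only symbols from the two $d_1$-cycles. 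Lemma~\ref{LMLcmAB} applied to an entry of $M_{13}$ (with $a=d_1$, $b=d_3$, $c=d_1$) then forces $\lcm(d_1,d_3)=d_1$, that is, $d_3\mid d_1$.

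For sufficiency I would assume (a)--(c) and first fill $\S_3$ using \tref{THEqualCycLen}. By Lemma~\ref{LMaddfix} it then suffices to realise the smallest admissible $d_\infty$ (namely $0$ when $d_3$ is odd and $1$ when $d_3$ is even), provided the base construction places at least $d_3$ (respectively $d_3-1$) copies of each leading symbol $t_k$ in its diagonal block $M_{kk}$; this bookkeeping point I would verify at the end. The heart of the argument is a contour on the rows and columns of $\alpha_1\cup\alpha_2$. Since $d_3\mid d_1$, each $d_1$-cycle subdivides into $d_1/d_3$ stretches of length $d_3$, and the cross blocks $M_{13},M_{31},M_{1\infty},M_{\infty1}$ together with their $\alpha_2$ analogues must be filled solely with the $d_1$-cycle leading symbols $t_1,t_2$ (note that Lemma~\ref{LMinfrowcol} does not apply here because $\alpha_1$ and $\alpha_2$ have equal length). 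I would place $t_3$ and the fixed points along the diagonal blocks $M_{11},M_{22}$ using odd or even patterns governed by the offsets $\offset{1}{i}$, fill $M_{12},M_{21}$ with staircase patterns for $t_1,t_2$, and complete the cross blocks with the rectangular-block patterns of Section~\ref{Ss:BuildingBlocks}, invoking Lemma~\ref{LMgapsuff} to guarantee the required transversals exist. Conditions (a)--(e) of Lemma~\ref{LMContour} are then checked blockwise.

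The main obstacle is the even case. When $d_3$ is even (which forces $d_1$ even), the even pattern in the diagonal blocks necessarily leaves a ``missing'' cell in two columns, exactly as in \eqref{EQEvenPattern}, and these defects must be absorbed by a fixed point, which is precisely why condition~(c) is needed and why the minimal $d_\infty$ jumps to $1$. Reconciling the missing cells across $M_{11},M_{12},M_{21},M_{22}$ and the cross blocks without creating a repeated symbol in any row or column (Lemma~\ref{LMContour}(d),(e)) is the delicate step, and I expect it to require a further split according to the parity of $d_1/d_3$, just as the analogous split arose in Proposition~\ref{P:ThreeCycles2} and in \tref{THTwoCycles}.
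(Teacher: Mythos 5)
Your necessity argument is sound, and for condition (a) it even takes a different route from the paper: you pin down the symbols of $M_{13}$ via the subsquare $\S_3$ and then invoke \lref{LMLcmAB} with $a=c=d_1$, $b=d_3$, whereas the paper applies \tref{THStrongLCM} with $\Lambda$ the set of divisors of $d_1$ and observes that if $d_3\nmid d_1$ then $L$ would contain a subsquare of order $2d_1+d_\infty=n-d_3>\half n$, contradicting \lref{LMMaxSize}. Both are correct; your version is closer in spirit to how the divisibility conditions are proved in \tref{THTwoCycles} and Proposition~\ref{P:ThreeCycles2}. Conditions (b) and (c) via \lref{LMsubsq}(ii) match the paper exactly.

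The sufficiency direction, however, is a genuine gap: what you offer is a plan, not a proof, and the plan stops exactly where the content of the proposition lies. You say you \emph{would} place $t_3$ and the fixed points in $M_{11},M_{22}$, fill $M_{12},M_{21}$ with staircase patterns, complete the cross blocks via \lref{LMgapsuff}, and verify the bookkeeping for \lref{LMaddfix} ``at the end''; and for $d_3$ even you concede that reconciling the missing cells of the even patterns ``is the delicate step,'' which you ``expect'' to require a further case split. None of this is carried out, and it is precisely this reconciliation --- explicit cell coordinates satisfying \lref{LMContour}(a)--(e) simultaneously across all the blocks --- that constitutes the paper's proof of the even case: a fully explicit contour for $d_\infty=1$ (culminating in Figure~\ref{Fg:2Same}), with $M_{31},M_{32}$ filled by \lref{LMgapsuff}, the $\infty$-blocks by \lref{LMinfrowcol}, and larger $d_\infty$ by \lref{LMaddfix}. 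Note also that in the paper's construction the symbols $t_3$ and the leading symbols $t_1,t_2$ are distributed over $M_{11},M_{12},M_{21},M_{22}$ rather than confined as in your allocation, so even the feasibility of your particular placement would need independent verification. Finally, you miss a substantial simplification available when $d_3$ is odd: the paper avoids any new construction by taking $\beta$ with cycle structure $(2d_1)\tdot d_3\tdot 1^{d_\infty}$, which lies in $\automorphs{n}$ by \tref{THTwoCycles}, and noting that $\beta^2$ has the same cycle structure as $\alpha$, so \lref{LMAutConj} finishes that case for every admissible $d_\infty$ at once. Without either this reduction or a completed explicit construction, the ``if'' direction remains unproven.
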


\begin{proof}
Let $L$ be a Latin square such that
$\alpha\in\autm(L)$. Lemma~\ref{LMsubsq} implies that
$K=\bigcup_{i,j\in \{3,\infty\}}M_{ij}$ is a subsquare of $L$ and that conditions
(b) and (c) must hold.  Theorem~\ref{THStrongLCM} implies that
$\alpha^{d_1}=\id$, otherwise $L$ contains a subsquare of order
$2d_1+d_\infty>\half n$, contradicting Lemma~\ref{LMMaxSize}. Hence
$d_3$ must divide $d_1$, which is condition (a).

For the sufficiency, assume that conditions (a)--(c) hold.  Again, we
need only find a partial contour for $L\setminus K$.

\textit{Case I}: $d_3$ is odd.  Theorem~\ref{THTwoCycles} implies that
$\beta \in \automorphs{n}$, where $\beta$ has the cycle structure
$(2d_1)\tdot d_3\tdot 1^{d_\infty}$. Since $\beta^2\in \autm(n)$ has
the same cycle structure as $\alpha$, we have $\alpha \in
\automorphs{n}$ by Lemma~\ref{LMAutConj}.

\textit{Case II}: $d_3$ is even.  We will construct a partial contour
satisfying the conditions of Lemma~\ref{LMContour} for the case when
$d_\infty=1$. Examples with larger $d_\infty$ can then be found using
Lemma~\ref{LMaddfix}.

We first define the partial contour for $M_{11}\cup M_{12} \cup M_{21} \cup M_{22}$.
\begin{align*}
  & \C(\half d_1+1,\half d_1)=n, & \\
  & \C(\half d_1+1+i,\half d_1-i)=t_2 & \text{ for } & 1 \leq i \leq \half d_1-1,\\
  & \C(\half d_1+i,\half d_1-i+2)=t_1 & \text{ for } & 1 \leq i \leq \half d_1,
\\[1.5ex]
  & \C(i,2d_1-i)=t_2 & \text{ for } & 1 \leq i \leq \half d_1,\\
  & \C(i,t_3-i)=t_1 & \text{ for } & 1 \leq i \leq \half d_1-d_3,\\
  & \C(\half d_1-d_3+i,t_3-\half d_1+d_3-i)=t_3 & \text{ for } & 1 \leq i \leq d_3,
\\[1.5ex]
  & \C(t_2,1)=t_1,\\
  & \C(d_1+i,t_2-i)=t_2 & \text{ for } & 1 \leq i \leq \half d_1-d_3,\\
  & \C(\tfrac32 d_1-d_3+i,\half d_1+d_3-i+1)=t_3 & \text{ for } & 1 \leq i \leq d_3,\\
  & \C(t_2+i,t_2-i)=t_1 & \text{ for } & 1 \leq i \leq \half d_1-1,
\\[1.5ex]
  & \C(\tfrac32 d_1+1,\tfrac32 d_1)=n, &\\
  & \C(\tfrac32 d_1+i,\tfrac32 d_1-i)=t_2 & \text{ for } & 1 \leq i \leq \half d_1-1,\\
  & \C(\tfrac32 d_1+1+i,\tfrac32 d_1-i)=t_1 & \text{ for } & 1 \leq i \leq \half d_1-1,\\
  & \C(2d_1,2d_1)=t_2.
\end{align*}
We can now fill $M_{31}$ using Lemma~\ref{LMgapsuff}, with $e=\half g$
and symbol $t_2$, and also $M_{32}$ with symbol $t_1$. In blocks
$M_{13}\cup M_{23}$ we use similar cells (transposed), but we use both
leading symbols $t_2$, $t_3$ in both blocks as follows:
\begin{align*}
  & \C(\half d_1-d_3+i,n-i)=t_1 & \text{ for } & 1 \leq i \leq \half d_3,\\
  & \C(\half d_1-\half d_3+i+1,n-\half d_3-i)=t_1 & \text{ for } & 1 \leq i \leq \half d_3-1,\\
  & \C(\half d_1+1,t_3)=t_2,
\\[1.5ex]
  & \C(\tfrac32 d_1-d_3+i,n-i)=t_2 & \text{ for } & 1 \leq i \leq \half d_3,\\
  & \C(\tfrac32 d_1-\half d_3+i+1,n-\half d_3-i)=t_2 & \text{ for } & 1 \leq i \leq \half d_3-1,\\
  & \C(\tfrac32 d_1+1,t_3)=t_1.
\end{align*}
The blocks $M_{1\infty} \cup M_{2\infty}\cup M_{\infty1}\cup
M_{\infty2}$ can be filled using Lemma~\ref{LMinfrowcol}. The
construction (with the partial contour of $L\setminus K$) is
illustrated in Figure \ref{Fg:2Same} for $d_1=d_2=12$, $d_3=4$ and
$d_\infty=1$.
\end{proof}

\begin{figure}[htb]
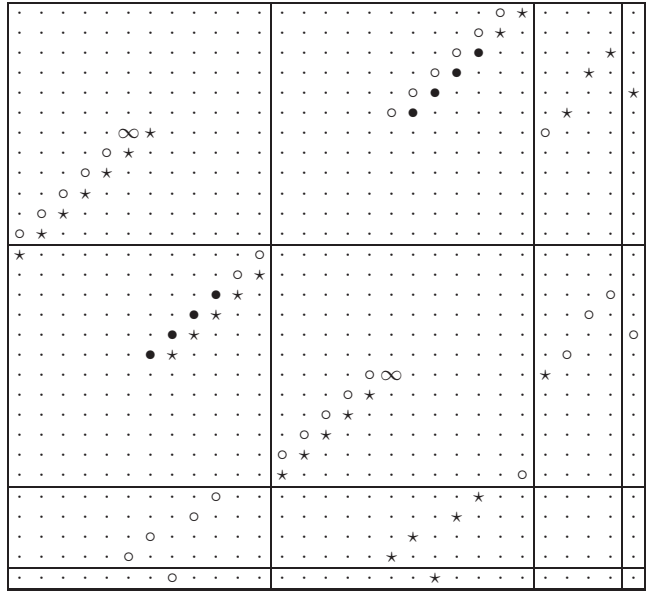

\verynarrowcols
\begin{displaymath}
\begin{array}{|cccccccccccc|cccccccccccc|cccc|c|}
    \hline
    \xxx{22}&\xb&\xa&\xxx{5}\\
    \xxx{21}&\xb&\xa&\xxx{6}\\
    \xxx{20}&\xb&\xc&\xxx{5}&\xa&\xx\\
    \xxx{19}&\xb&\xc&\xxx{5}&\xa&\xxx{2}\\
    \xxx{18}&\xb&\xc&\xxx{8}&\xa\\
    \xxx{17}&\xb&\xc&\xxx{6}&\xa&\xxx{3}\\
    \xxx{5}&\xi&\xa&\xxx{17}&\xb&\xxx{4}\\
    \xxx{4}&\xb&\xa&\xxx{23}\\
    \xxx{3}&\xb&\xa&\xxx{24}\\
    \xxx{2}&\xb&\xa&\xxx{25}\\
    \xx&\xb&\xa&\xxx{26}\\
    \xb&\xa&\xxx{27}\\
    \hline
    \xa&\xxx{10}&\xb&\xxx{17}\\
    \xxx{10}&\xb&\xa&\xxx{17}\\
    \xxx{9}&\xc&\xa&\xxx{16}&\xb&\xx\\
    \xxx{8}&\xc&\xa&\xxx{16}&\xb&\xxx{2}\\
    \xxx{7}&\xc&\xa&\xxx{19}&\xb\\
    \xxx{6}&\xc&\xa&\xxx{17}&\xb&\xxx{3}\\
    \xxx{16}&\xb&\xi&\xxx{6}&\xa&\xxx{4}\\
    \xxx{15}&\xb&\xa&\xxx{12}\\
    \xxx{14}&\xb&\xa&\xxx{13}\\
    \xxx{13}&\xb&\xa&\xxx{14}\\
    \xxx{12}&\xb&\xa&\xxx{15}\\
    \xxx{12}&\xa&\xxx{10}&\xb&\xxx{5}\\
    \hline
    \xxx{9}&\xb&\xxx{11}&\xa&\xxx{7}\\
    \xxx{8}&\xb&\xxx{11}&\xa&\xxx{8}\\
    \xxx{6}&\xb&\xxx{11}&\xa&\xxx{10}\\
    \xxx{5}&\xb&\xxx{11}&\xa&\xxx{11}\\
    \hline
    \xxx{7}&\xb&\xxx{11}&\xa&\xxx{9}\\
    \hline
\end{array}
\end{displaymath}
\verynormalcols
\caption{\label{Fg:2Same}A partial contour for
$d_1=d_2=12$, $d_3=4$ and $d_\infty=1$.}
\end{figure}

\subsection{The case $d_1>d_2>d_3$}

The case of three distinct cycle lengths splits into two, depending on
whether or not $d_3$ divides $d_2$.

\begin{prop}\label{P:ThreeCyclesA}
Suppose that $\alpha\in S_n$ has precisely three nontrivial cycles of
lengths $d_1>d_2>d_3$ where $d_3$ does not divide $d_2$.
Let $d_\infty$ be the number of fixed points of
$\alpha$. Then $\alpha\in\automorphs{n}$ if and only if
(a) $d_1 = \lcm(d_2,d_3)$, (b) $d_3\ge d_\infty$, and
(c) if $d_1$ is even then $d_\infty>0$.
\end{prop}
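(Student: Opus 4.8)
The plan is to prove necessity and sufficiency separately, using throughout that condition~(a) entails $d_2\mid d_1$ and $d_3\mid d_1$.

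\emph{Necessity.} Assume $\alpha\in\autm(L)$ and establish (a) first. Applying \lref{LMLcmAB} to the block $M_{23}$ (rows in $\alpha_2$, columns in $\alpha_3$), every symbol there lies in a $c$-cycle with $\lcm(d_2,d_3)=\lcm(d_3,c)=\lcm(d_2,c)$. Since $d_3\nmid d_2$ we have $\lcm(d_2,d_3)>d_2>d_3>1$, so none of $c\in\{1,d_3,d_2\}$ can work, leaving only $c=d_1$; this moreover forces $\ell:=\lcm(d_2,d_3)=\lcm(d_1,d_2)=\lcm(d_1,d_3)$, whence $d_1\mid\ell$. If $d_1<\ell$, I would look at the whole row-band of $\alpha_2$: by \lref{LMLcmAB} an $\alpha_1$-symbol may occur in $M_{2j}$ only when $\lcm(d_2,d_j)=\lcm(d_j,d_1)=\ell$, and a direct check of $j\in\{1,2,\infty\}$ rules each out (in each case one of the three lcm's collapses to $d_1$ or $d_2$, both $<\ell$), so such a symbol is confined to the $d_3$ columns of $M_{23}$; but it must appear $d_2$ times in the band, one per column, and $d_2>d_3$ is a contradiction. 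Hence $d_1=\ell=\lcm(d_2,d_3)$, which is (a). With (a) in hand the three nontrivial lengths are distinct and $d_2,d_3\mid d_1$, so in the notation of \lref{LMsubsq} we have $I=\{2,3\}$; part~(ii) of that lemma then yields $d_\infty\le d_3$ (which is (b), as $d_3<d_2$) and the implication ``$d_2$ or $d_3$ even $\Rightarrow d_\infty>0$'', i.e.\ ``$d_1=\lcm(d_2,d_3)$ even $\Rightarrow d_\infty>0$'', which is (c).

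\emph{Sufficiency.} Assume (a)--(c), so $d_2,d_3\mid d_1$ and $I=\{2,3\}$, and build a contour $\C$ satisfying \lref{LMContour} block by block. Since (b)--(c) give exactly the hypotheses of \lref{LMsubsq}(iii), that lemma fills the overlapping subsquares $\S_2$ and $\S_3$, disposing of $M_{22}$, $M_{33}$ and all blocks meeting $\alpha_\infty$ other than $M_{1\infty}$ and $M_{\infty1}$. The latter two I postpone and complete at the very end via \lref{LMinfrowcol}(ii), after arranging that $M_{11}\cup M_{12}\cup M_{13}$ contains exactly $d_1-d_\infty$ copies of $t_1$ (and symmetrically for the column). (Alternatively one may treat the least admissible $d_\infty$ and restore the rest by prolongation, \lref{LMaddfix}, as in the earlier proofs.)

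\emph{The core and the main obstacle.} What remains are the mixed blocks $M_{11},M_{12},M_{21},M_{13},M_{31}$ and the rectangular blocks $M_{23},M_{32}$. Here \lref{LMLcmAB} pins down the admissible symbols: $M_{23}$ and $M_{32}$ carry only $\alpha_1$-symbols (each of the $d_1$ of them appearing $\gcd(d_2,d_3)$ times), $M_{12},M_{21}$ carry $\alpha_1$- and $\alpha_3$-symbols, $M_{13},M_{31}$ carry $\alpha_1$- and $\alpha_2$-symbols, and $M_{11}$ carries all four kinds; the exact multiplicities are forced by the band totals \eqref{EQBlock1}. My plan is to place the leading symbol $t_1$ once in each of the $\gcd(d_2,d_3)$ cell orbits of $M_{23}$ and of $M_{32}$ by invoking \lref{LMgapsuff} to realise the required transversals, to fill $M_{11}$ with the odd or even pattern according to the parity of $d_1$ (letting a fixed point absorb the missing cell of the even pattern when $d_1$ is even), and to fill $M_{12},M_{21},M_{13},M_{31}$ by the rectangular-block technique, again calling \lref{LMgapsuff} (with $e=g-1$ or $e=\half g$) for their transversals. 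The main obstacle is this final, simultaneous step: the copies of $t_1$ deposited in the various blocks of the $\alpha_1$ row- and column-bands must never collide modulo the gcd's demanded by conditions~(d)--(e) of \lref{LMContour}, and the even-length cycles force the awkward even pattern whose defect has to be reconciled with the available fixed point. I expect the argument to divide into cases according to the parities of $d_2$ and $d_3$ (equivalently, whether $d_1/d_2$ and $d_1/d_3$ are odd or even), the case in which both are even being the most delicate, and I would exhibit an explicit contour for it, guided by the offsets $\offset{1}{i}$ and illustrated by worked examples in the spirit of \fref{Fg:Split} and \fref{Fg:2Same}.
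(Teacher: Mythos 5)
Your necessity argument is correct and runs essentially parallel to the paper's: both extract $d_1=\lcm(d_2,d_3)$ from \lref{LMLcmAB} applied to $M_{23}$, and both obtain (b) and (c) from \lref{LMsubsq}(ii) with $I=\{2,3\}$. Your counting contradiction in the $\alpha_2$ row-band (an $\alpha_1$-symbol would have to occur $d_2$ times inside the $d_3$ columns of $M_{23}$) is a legitimate substitute for the paper's route, which instead forces $\alpha_2$-symbols into $M_{11}$ via the block diagram and reads off $d_2\mid d_1$, $d_3\mid d_1$ there. The scaffolding you erect for sufficiency --- minimal $d_\infty$ plus \lref{LMaddfix}, the subsquares $\S_2,\S_3$ via \lref{LMsubsq}(iii), the deferred blocks $M_{1\infty},M_{\infty1}$ via \lref{LMinfrowcol}, and \lref{LMgapsuff} for the transversals of $M_{23}$ and $M_{32}$ --- is also exactly the paper's.

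However, the sufficiency direction is where the proof actually lives, and you have not carried it out: you announce a plan (``I would exhibit an explicit contour'', ``I expect the argument to divide into cases'') rather than produce a contour and verify \lref{LMContour} for it. Two concrete pieces are missing. First, before any pattern can be placed in $M_{11}$ you must show that the $d_2-g$ copies of $t_2$ and the $d_3-g$ copies of $t_3$ fit into at most $\half d_1$ consecutive rows and columns; the paper proves the inequality $\half d_1\ge(d_2-g)+(d_3-g)$ by writing $d_1=abg$, $d_2=ag$, $d_3=bg$. Second, and more seriously, your anticipated case split ``according to the parities of $d_2$ and $d_3$'' is too coarse. The decisive check is whether the $g\times g$ submatrices $N_{23}\subseteq M_{23}$ and $N_{32}\subseteq M_{32}$ (the rows and columns left free of $t_1$ by the surrounding blocks) satisfy the congruence \eref{Eq:gapstuff} of \lref{LMgapsuff}. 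When $d_2=ag$, $d_3=bg$ and $g$ are all even, that congruence amounts to $d_2/2-d_3/2\equiv \half g\pmod g$, which holds if and only if $a-b$ is odd. When $a-b$ is even (equivalently $a$ and $b$ both odd), the natural layout fails outright, and the paper needs a genuinely different construction (its case (iv), Figure~\ref{FIGBig}): relocating the break in the diagonal of $M_{11}$ to sit between the $t_2$'s and $t_3$'s, shifting the pattern in $M_{21}$, and moving one entry of $M_{13}$ down to row $d_1$, which is only legal because $d_1/2$ is an odd multiple of $d_3/2$. Your plan neither detects this obstruction nor supplies the workaround, so as it stands the sufficiency half is a gap, not a proof.
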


\begin{proof}
Let $L$ be a Latin square such that $\alpha\in\autm(L)$.
For $i \in \{1,2,3\}$, blocks $M_{i\infty}$ and $M_{\infty i}$ can be
constructed using \lref{LMinfrowcol}.  Also $M_{22}\cup
M_{2\infty}\cup M_{\infty2}\cup M_{33}\cup M_{3\infty}\cup
M_{\infty3}\cup M_{\infty\infty}$ can be constructed using
Lemma~\ref{LMsubsq}, assuming (b). Lemma~\ref{LMLcmAB} implies that
only symbols from $\alpha_1$ can appear in $M_{23}$ and $M_{32}$,
since $d_3$ does not divide $d_2$. Combining this information, and the
constraints from the definition of a Latin square shows that the block
diagram of $L$ must be as in Figure~\ref{FILStruct3Cyc}.

\begin{figure}[htb]
\narrowcols
\begin{displaymath}
\begin{array}{l|l|l|l|l|}
    &\ \alpha_1&\ \alpha_2&\ \alpha_3&\ \alpha_\infty\\
    \hline
    \alpha_1&
        \begin{array}{l} \alpha_1:d_1-d_2-d_3+2g-d_\infty\\ \alpha_2:d_1-d_3\\ \alpha_3:d_1-d_2\\ \alpha_\infty: d_1 \end{array}&
        \begin{array}{l} \alpha_1:d_2-g\\ \alpha_3:d_2\\ \phantom{x}\\ \phantom{x} \end{array}&
        \begin{array}{l} \alpha_1:d_3-g\\ \alpha_2:d_3\\ \phantom{x}\\ \phantom{x} \end{array}&
        \begin{array}{l} \alpha_1:d_\infty\\ \phantom{x}\\ \phantom{}\\ \phantom{x} \end{array}\\
    \hline
    \alpha_2&
        \begin{array}{l} \alpha_1:d_2-g\\ \alpha_3:d_2\end{array}&
        \begin{array}{l} \alpha_2:d_2-d_\infty\\ \alpha_\infty:d_2 \end{array}&
        \begin{array}{l} \alpha_1:g\\ \phantom{x} \end{array}&
        \begin{array}{l} \alpha_2:d_\infty\\ \phantom{x} \end{array}\\
    \hline
    \alpha_3&
        \begin{array}{l} \alpha_1:d_3-g\\ \alpha_2:d_3 \end{array}&
        \begin{array}{l} \alpha_1:g\\ \phantom{x} \end{array}&
        \begin{array}{l} \alpha_3:d_3-d_\infty\\ \alpha_\infty:d_3 \end{array}&
        \begin{array}{l} \alpha_3:d_\infty\\ \phantom{x} \end{array}\\
    \hline
    \alpha_\infty&
        \begin{array}{l} \alpha_1:d_\infty \end{array}&
        \begin{array}{l} \alpha_2:d_\infty \end{array}&
        \begin{array}{l} \alpha_3:d_\infty \end{array}&
        \begin{array}{l} \alpha_\infty:d_\infty \end{array}\\
    \hline
\end{array}
\end{displaymath}
\normalcols
\caption{\label{FILStruct3Cyc}Block diagram of $L$ with $d_1>d_2>d_3$
  the only nontrivial cycle lengths, $d_3$ does not divide $d_2$, and
  $g=d_2d_3/d_1 = \gcd(d_2,d_3)$.}
\end{figure}

Applying \lref{LMLcmAB} to $M_{23}$ implies
$\lcm(d_1,d_2,d_3)=\lcm(d_2,d_3)$ and so $d_1$ divides
$\lcm(d_2,d_3)$.  Applying Lemma~\ref{LMLcmAB} to $M_{11}$ implies
$d_1=\lcm(d_1,d_1)=\lcm(d_1,d_2)$, and similarly
$d_1=\lcm(d_1,d_3)$. Hence $d_2$ and $d_3$ both divide $d_1$.
Therefore $d_1=\lcm(d_2,d_3)$, which is condition (a), and
$g=\gcd(d_2,d_3) = d_2d_3/d_1$. Given (a), we see that $d_1$ is even
precisely when at least one of $d_2,d_3$ is even.
Lemma~\ref{LMsubsq}(ii) then shows the necessity of conditions (b) and
(c).

To prove sufficiency, we will give constructions of contours for the
case when $d_\infty=1$ and $d_1$ is even, and also when $d_\infty=0$
and $d_1$ is odd. Examples with larger $d_\infty$ can then be found
using Lemma~\ref{LMaddfix}.

By Figure \ref{FILStruct3Cyc}, each symbol from $\alpha_2$ occurs
$d_1-d_3$ times in the block $M_{11}$. Note that
$(d_1-d_3)/(\lcm(d_1,d_2)/d_2) = (d_1-d_3)/(d_1/d_2) = d_2-g$. We
therefore need to place $d_2-g$ leading entries $t_2$ into
$M_{11}$. Similarly, we need to place $d_3-g$ leading entries $t_3$
into $M_{11}$.

It will be of importance in some contours to place these
entries $t_2$ and $t_3$ into at most $d_1/2$ consecutive rows and
columns of $M_{11}$. We claim that this can be done, because
$\half d_1\ge (d_2-g) + (d_3-g)$. Indeed, if $d_1=abg$, $d_2=ag$ and
$d_3=bg$, the inequality is equivalent to
$\half{ab}=\half(a-2)(b-2)+a+b-2 \ge a+b-2$, which holds since $a>b\ge2$.

It is convenient to consider four cases. The cases (i)--(iii) will be
handled with the same contour (up to the usual parity offsets) but
they will require separate explanations. The case (iv) will require a
slightly different contour. The cases are:

\begin{enumerate}
\item[(i)] $d_1$, $d_2$, $d_3$ and $g$ are all odd, $d_\infty=0$,
\item[(ii)] $d_1$ is even, precisely one of $d_2$ and $d_3$ is even,
  $g$ is odd and $d_\infty=1$,
\item[(iii)] $d_1$, $d_2=ag$, $d_3=bg$ and $g$ are all even,
  $d_\infty=1$ and $a-b$ is odd,
\item[(iv)] $d_1$, $d_2=ag$, $d_3=bg$ and $g$ are all even,
  $d_\infty=1$ and $a-b$ is even.
\end{enumerate}

Cases (i)--(iii): To fill $M_{11}$, for $1\le i\le d_1$, let
\[
\C(i,t_2-i-\O_{1,i})=
\begin{cases}
t_2& \text{ for } 1 \leq i \leq d_2-g,\\
t_3& \text{ for } d_2-g< i \leq d_2+d_3-2g,\\
n& \text{ for $i=d_1$ if $d_1$ is even,}\\
t_1& \text{ otherwise}.
\end{cases}
\]
In addition to $\O_{1,i}$, define also offsets $\O_{j,i}$ for $j\in\{2,3\}$ by
\[
\O_{j,i}=
\begin{cases}
1&\text{if $d_j$ is even and $g<i\le g+\half d_j$,}\\
0&\text{otherwise.}
\end{cases}
\]
To fill $M_{12}\cup M_{21}$, for $1 \leq i \leq d_2$, let
\[
\C(d_2+1-i,d_1+i-\O_{2,i})=\C(t_3-i+\O_{2,i},d_1-d_2+i)=
\begin{cases}
t_3& \text{if }g< i \leq 2g,\\
t_1& \text{otherwise.}
\end{cases}
\]
To fill $M_{13}\cup M_{31}$, for $1 \leq i \leq d_3$, let
\[
\C(d_2-2g+i,t_4-i+\O_{3,i})=\C(t_3-1+i-\O_{3,i},d_1-d_2+2g+1-i)=
\begin{cases}
t_2& \text{if }g< i \leq 2g,\\
t_1& \text{otherwise.}
\end{cases}
\]
This partial contour is illustrated in Figure \ref{Fg:15} with $d_1=15$,
$d_2=5$, $d_3=3$ for case (i), and in Figure \ref{Fg:24} with
$d_1=24$, $d_2=8$, $d_3=6$ for case (iii).

\begin{figure}[htb]
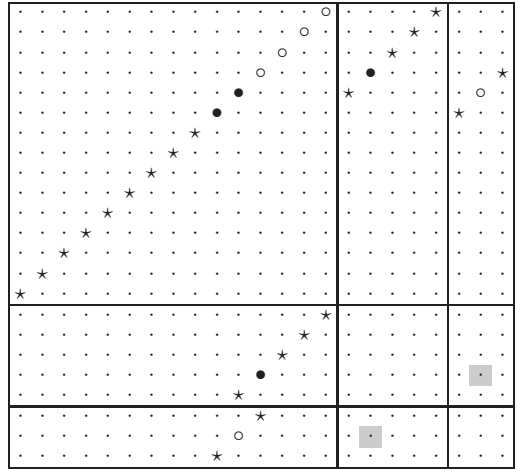

\verynarrowcols
\begin{displaymath}
\begin{array}{|ccccccccccccccc|ccccc|ccc|}
    \hline
    \xxx{14}&\xb&\xxx{4}&\xa&\xxx{3}\\
    \xxx{13}&\xb&\xxx{4}&\xa&\xxx{4}\\
    \xxx{12}&\xb&\xxx{4}&\xa&\xxx{5}\\
    \xxx{11}&\xb&\xxx{4}&\xc&\xxx{5}&\xa\\
    \xxx{10}&\xc&\xxx{4}&\xa&\xxx{5}&\xb&\xx\\
    \xxx{9}&\xc&\xxx{10}&\xa&\xxx{2}\\
    \xxx{8}&\xa&\xxx{14}\\
    \xxx{7}&\xa&\xxx{15}\\
    \xxx{6}&\xa&\xxx{16}\\
    \xxx{5}&\xa&\xxx{17}\\
    \xxx{4}&\xa&\xxx{18}\\
    \xxx{3}&\xa&\xxx{19}\\
    \xxx{2}&\xa&\xxx{20}\\
    \xx&\xa&\xxx{21}\\
    \xa&\xxx{22}\\
    \hline
    \xxx{14}&\xa&\xxx{8}\\
    \xxx{13}&\xa&\xxx{9}\\
    \xxx{12}&\xa&\xxx{10}\\
    \xxx{11}&\xc&\xxx{9}&\c\xx&\xx\\
    \xxx{10}&\xa&\xxx{12}\\
    \hline
    \xxx{11}&\xa&\xxx{11}\\
    \xxx{10}&\xb&\xxx{5}&\c\xx&\xxx{6}\\
    \xxx{9}&\xa&\xxx{13}\\
    \hline
\end{array}
\end{displaymath}
\verynormalcols
\caption{\label{Fg:15}A partial contour for
$d_1=15$, $d_2=5$, $d_3=3$, $d_\infty=0$.}
\end{figure}

We claim that blocks $M_{23}$ and $M_{32}$ can now be completed by
Lemma~\ref{LMgapsuff}. Let $N_{23}$ be the $g\times g$ submatrix of
$M_{23}$ formed by the rows of $M_{21}$ not containing the leading
symbol $t_1$, and by the columns of $M_{13}$ not containing the
leading symbol $t_1$. Define similarly the $g\times g$ submatrix
$N_{32}$ of $M_{32}$. These two submatrices are shaded gray in Figures
\ref{Fg:15} and \ref{Fg:24}.

In case (i), $N_{23}$ and $N_{32}$ consist of $g$ consecutive rows and
columns. We can represent this situation with parameters $e=g-1$,
$h_1=0$, $h_2=0$, as explained after Lemma~\ref{LMgapsuff}. Since $g$
is odd, Lemma~\ref{LMgapnec} implies that $\delta_g\equiv 0\pmod g$,
and \eqref{Eq:gapstuff} becomes $(h_1-h_2)(g-1)\equiv 0\pmod g$, which
is obviously satisfied.

In case (ii), let us first assume that $d_2$ is odd and $d_3$ is
even. The submatrix $N_{23}$ can be represented with parameters
$e=g-1$, $h_1=0$ (because $d_2$ is odd and there are no offsets) and
$h_2=d_3/2-g$. Since $g$ is odd, we again need $(h_1-h_2)(g-1)\equiv
0\pmod g$, which becomes $d_3/2\equiv 0\pmod g$, or $bg/2\equiv 0\pmod
g$, which is equivalent to $b$ being even. This is true, since $g$ is
odd and $d_3=bg$ is even. Similarly for the submatrix $N_{32}$. The
case $d_2$ even, $d_3$ odd is analogous.

In case (iii), the submatrix $N_{23}$ can be represented with
parameters $e=g-1$, $h_1=d_2/2-g$, $h_2=d_3/2-g$. Since $g$ is even,
equation \eqref{Eq:gapstuff} becomes $(h_1-h_2)e\equiv g/2\pmod g$ by
Lemma \ref{LMgapnec}, i.e., $(d_2/2-d_3/2)\equiv g/2\pmod g$. This
holds precisely when $a-b$ is odd, which is one of the assumptions of
case (iii). Similarly for $N_{32}$.

In all cases (i)--(iii), the remaining blocks can be filled in using
Lemmas~\ref{LMinfrowcol} and \ref{LMsubsq}.

\begin{figure}[htb]
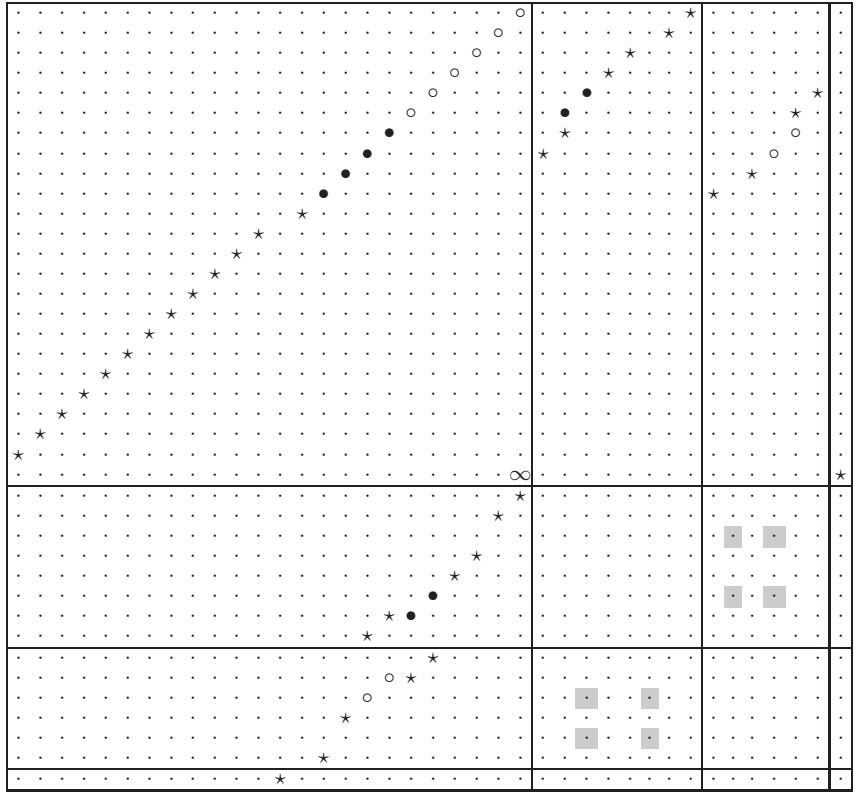

\verynarrowcols
\begin{displaymath}
\begin{array}{|cccccccccccccccccccccccc|cccccccc|cccccc|c|}
    \hline
    \xxx{23}&\xb&\xxx{7}&\xa&\xxx{7}\\
    \xxx{22}&\xb&\xxx{7}&\xa&\xxx{8}\\
    \xxx{21}&\xb&\xxx{6}&\xa&\xxx{10}\\
    \xxx{20}&\xb&\xxx{6}&\xa&\xxx{11}\\
    \xxx{19}&\xb&\xxx{6}&\xc&\xxx{10}&\xa&\xx\\
    \xxx{18}&\xb&\xxx{6}&\xc&\xxx{10}&\xa&\xxx{2}\\
    \xxx{17}&\xc&\xxx{7}&\xa&\xxx{10}&\xb&\xxx{2}\\
    \xxx{16}&\xc&\xxx{7}&\xa&\xxx{10}&\xb&\xxx{3}\\
    \xxx{15}&\xc&\xxx{18}&\xa&\xxx{4}\\
    \xxx{14}&\xc&\xxx{17}&\xa&\xxx{6}\\
    \xxx{13}&\xa&\xxx{25}\\
    \xxx{11}&\xa&\xxx{27}\\
    \xxx{10}&\xa&\xxx{28}\\
    \xxx{9}&\xa&\xxx{29}\\
    \xxx{8}&\xa&\xxx{30}\\
    \xxx{7}&\xa&\xxx{31}\\
    \xxx{6}&\xa&\xxx{32}\\
    \xxx{5}&\xa&\xxx{33}\\
    \xxx{4}&\xa&\xxx{34}\\
    \xxx{3}&\xa&\xxx{35}\\
    \xxx{2}&\xa&\xxx{36}\\
    \xx&\xa&\xxx{37}\\
    \xa&\xxx{38}\\
    \xxx{23}&\xi&\xxx{14}&\xa\\
    \hline
    \xxx{23}&\xa&\xxx{15}\\
    \xxx{22}&\xa&\xxx{16}\\
    \xxx{33}&\c\xx&\xx&\c\xx&\xxx{3}\\
    \xxx{21}&\xa&\xxx{17}\\
    \xxx{20}&\xa&\xxx{18}\\
    \xxx{19}&\xc&\xxx{13}&\c\xx&\xx&\c\xx&\xxx{3}\\
    \xxx{17}&\xa&\xc&\xxx{20}\\
    \xxx{16}&\xa&\xxx{22}\\
    \hline
    \xxx{19}&\xa&\xxx{19}\\
    \xxx{17}&\xb&\xa&\xxx{20}\\
    \xxx{16}&\xb&\xxx{9}&\c\xx&\xxx{2}&\c\xx&\xxx{9}\\
    \xxx{15}&\xa&\xxx{23}\\
    \xxx{26}&\c\xx&\xxx{2}&\c\xx&\xxx{9}\\
    \xxx{14}&\xa&\xxx{24}\\
    \hline
    \xxx{12}&\xa&\xxx{26}\\
    \hline
\end{array}
\end{displaymath}
\verynormalcols
\caption{\label{Fg:24}A partial contour for
$d_1=24$, $d_2=8$, $d_3=6$, $d_\infty=1$.}
\end{figure}

Case (iv). Since $a-b$ is even and $\gcd(a,b)=1$, both $a$ and $b$ are
odd. In particular, $d_1/d_3=a$ is odd, and $d_1/2$ is an odd multiple
of $d_3/2$.

\begin{figure}[htb]
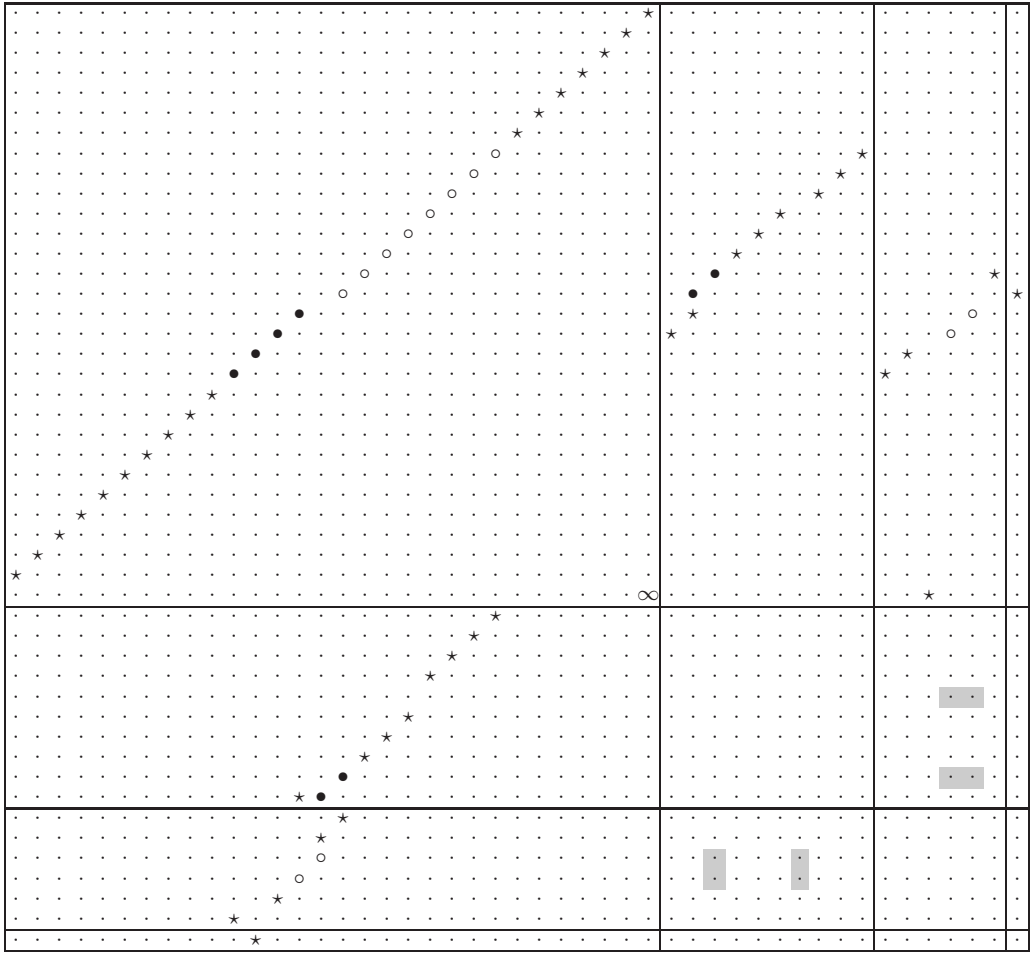

\begin{displaymath}
\verynarrowcols
\begin{array}{|cccccccccccccccccccccccccccccc|cccccccccc|cccccc|c|}
    \hline
    \xxx{29}&\xa&\xxx{17}\\
    \xxx{28}&\xa&\xxx{18}\\
    \xxx{27}&\xa&\xxx{19}\\
    \xxx{26}&\xa&\xxx{20}\\
    \xxx{25}&\xa&\xxx{21}\\
    \xxx{24}&\xa&\xxx{22}\\
    \xxx{23}&\xa&\xxx{23}\\
    \xxx{22}&\xb&\xxx{16}&\xa&\xxx{7}\\
    \xxx{21}&\xb&\xxx{16}&\xa&\xxx{8}\\
    \xxx{20}&\xb&\xxx{16}&\xa&\xxx{9}\\
    \xxx{19}&\xb&\xxx{15}&\xa&\xxx{11}\\
    \xxx{18}&\xb&\xxx{15}&\xa&\xxx{12}\\
    \xxx{17}&\xb&\xxx{15}&\xa&\xxx{13}\\
    \xxx{16}&\xb&\xxx{15}&\xc&\xxx{12}&\xa&\xx\\
    \xxx{15}&\xb&\xxx{15}&\xc&\xxx{14}&\xa\\
    \xxx{13}&\xc&\xxx{17}&\xa&\xxx{12}&\xb&\xxx{2}\\
    \xxx{12}&\xc&\xxx{17}&\xa&\xxx{12}&\xb&\xxx{3}\\
    \xxx{11}&\xc&\xxx{29}&\xa&\xxx{5}\\
    \xxx{10}&\xc&\xxx{29}&\xa&\xxx{6}\\
    \xxx{9}&\xa&\xxx{37}\\
    \xxx{8}&\xa&\xxx{38}\\
    \xxx{7}&\xa&\xxx{39}\\
    \xxx{6}&\xa&\xxx{40}\\
    \xxx{5}&\xa&\xxx{41}\\
    \xxx{4}&\xa&\xxx{42}\\
    \xxx{3}&\xa&\xxx{43}\\
    \xxx{2}&\xa&\xxx{44}\\
    \xxx{1}&\xa&\xxx{45}\\
    \xa&\xxx{46}\\
    \xxx{29}&\xi&\xxx{12}&\xa&\xxx{4}\\
    \hline
    \xxx{22}&\xa&\xxx{24}\\
    \xxx{21}&\xa&\xxx{25}\\
    \xxx{20}&\xa&\xxx{26}\\
    \xxx{19}&\xa&\xxx{27}\\
    \xxx{43}&\c\xx&\c\xx&\xxx{2}\\
    \xxx{18}&\xa&\xxx{28}\\
    \xxx{17}&\xa&\xxx{29}\\
    \xxx{16}&\xa&\xxx{30}\\
    \xxx{15}&\xc&\xxx{27}&\c\xx&\c\xx&\xxx{2}\\
    \xxx{13}&\xa&\xc&\xxx{32}\\
    \hline
    \xxx{15}&\xa&\xxx{31}\\
    \xxx{14}&\xa&\xxx{32}\\
    \xxx{14}&\xb&\xxx{17}&\c\xx&\xxx{3}&\c\xx&\xxx{10}\\
    \xxx{13}&\xb&\xxx{18}&\c\xx&\xxx{3}&\c\xx&\xxx{10}\\
    \xxx{12}&\xa&\xxx{34}\\
    \xxx{10}&\xa&\xxx{36}\\
    \hline
    \xxx{11}&\xa&\xxx{35}\\
    \hline
\end{array}
\verynormalcols
\end{displaymath}
\caption{\label{FIGBig}A partial contour for
$d_1=30$, $d_2=10$, $d_3=6$, $d_\infty=1$.}
\end{figure}

The construction (illustrated for $d_1=30$, $d_2=10$, $d_3=6$ and
$d_\infty=1$ in Figure \ref{FIGBig}), is similar to the one above, but
with several modifications, which we describe in words. First, we move
the leading symbols (but not the selected cells) in $M_{11}$ so that
the break in the diagonal occurs precisely between the leading symbols
$t_2$ and $t_3$. We fill $M_{12}$ as above, with the appropriate
vertical shift (which we will not mention any more). We fill $M_{21}$
as above, except that we shift the pattern down and left by one---this
is possible since we have an extra column to work with, the break
between symbols $t_2$ and $t_3$ in $M_{11}$. The block $M_{31}$ is
filled with the pattern from $M_{13}$ above, making sure that the
leading symbols $t_2$ in $M_{11}$ and $M_{13}$ occupy consecutive
columns. Finally, the block $M_{13}$ is filled as above, except that
the entry for $i=g$ is first moved down and to the left by $d_3/2$
(landing in row $d_1/2+d_3/2$), and then further down to row
$d_1$---since, as explained above, $d_1/2$ is an odd multiple of
$d_3/2$, the last vertical move is a multiple of $d_3$ and will
therefore not produce a clash with \lref{LMContour}.

Define the submatrices $N_{23}$ and $N_{32}$ as above. Then $N_{23}$
can be represented by the parameters $e=g-1$, $h_1=d_2/2-g$,
$h_2=0$. Equation \eqref{Eq:gapstuff} becomes $d_2/2\equiv g/2\pmod
g$, which holds because $a$ is odd. Similarly for $N_{32}$. The
remaining blocks can be filled in using Lemmas~\ref{LMinfrowcol} and
\ref{LMsubsq}.
\end{proof}

Finally, we treat the case of three distinct cycles when $d_3$ divides $d_2$.

\begin{prop}\label{P:ThreeCyclesB}
Suppose that $\alpha\in S_n$ has precisely three nontrivial cycles of
lengths $d_1>d_2>d_3$ where $d_3$ divides $d_2$.
Let $d_\infty$ be the number of fixed points of
$\alpha$. Then $\alpha\in\automorphs{n}$ if and only if
(a) $d_2$ divides $d_1$, (b) $d_3\ge d_\infty$, and
(c) if $d_3$ is even then $d_\infty>0$.
\end{prop}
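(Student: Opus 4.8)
The plan is to handle necessity and sufficiency separately, exploiting the full divisibility chain $d_3\mid d_2\mid d_1$ that the three conditions assert. For necessity, suppose $L$ is a Latin square with $\alpha\in\autm(L)$. Conditions (b) and (c) would follow at once from \lref{LMsubsq}: since $d_3$ is the smallest cycle length it has no proper divisor among $d_1,d_2,d_3$, so $3\in I$, and part (ii) of that lemma (with $\lambda_3=1$) gives $d_\infty\le d_3$ and forces $d_\infty>0$ when $d_3$ is even. For (a) I would examine the nonempty $d_1\times d_2$ block $M_{12}$: by \lref{LMLcmAB} every symbol there lies in a $c$-cycle with $\lcm(d_1,d_2)=\lcm(d_2,c)=\lcm(d_1,c)$. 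Running through the only available cycle lengths $d_1,d_2,d_3,1$ and using $d_3\mid d_2$, one checks that the sole admissible value is $c=d_1$, and this forces $d_2\mid d_1$. In particular $M_{12}$, and symmetrically $M_{21},M_{13},M_{31}$, contains only symbols of $\alpha_1$.

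For sufficiency, assume (a)--(c). By \lref{LMaddfix} it suffices to construct $L$ for the least number of fixed points, namely $d_\infty=0$ when $d_3$ is odd and $d_\infty=1$ when $d_3$ is even. The key structural observation is that $K_2=\bigcup_{i,j\in\{2,3,\infty\}}M_{ij}$, on the rows, columns and symbols $\alpha_2\cup\alpha_3\cup\alpha_\infty$, should be a subsquare of order $d_2+d_3+d_\infty$ carrying an automorphism of cycle type $d_2\tdot d_3\tdot 1^{d_\infty}$. Conditions (a)--(c) of the present statement coincide exactly with the hypotheses of \tref{THTwoCycles} applied to the pair $(d_2,d_3)$ (here $d_3\mid d_2$, $d_3\ge d_\infty$, and $d_3$ even implies $d_\infty>0$), so such a subsquare exists and I would fill it first. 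This can be done independently of $\alpha_1$, whose symbols never appear in $K_2$.

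It then remains to fill $M_{11}$ together with the arms $M_{1x},M_{x1}$ for $x\in\{2,3,\infty\}$. By the necessity analysis the arm blocks carry only the leading symbol $\tone$ (for $x\in\{2,3\}$), and each individually satisfies the divisibility needed by the rectangular block patterns of \sref{Ss:BuildingBlocks}: $d_2\mid d_1$ for $M_{12},M_{21}$ and $d_3\mid d_1$ for $M_{13},M_{31}$, so these can be completed using \lref{LMgapsuff}, while $M_{1\infty}$ and $M_{\infty1}$ are handled by \lref{LMinfrowcol}. The block $M_{11}$ receives a contour along its main antidiagonal that places $d_\infty$ fixed points, $d_2$ copies of $\ttwo$ and $d_3$ copies of $\tthree$ in contiguous runs (so that conditions (d)--(e) of \lref{LMContour} hold for these symbols), the remaining $d_1-d_2-d_3-d_\infty$ cell orbits taking $\tone$; this mirrors the $d_1>d_2$ construction in the proof of \tref{THTwoCycles}, but with two smaller leading symbols to distribute. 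I would then verify the conditions of \lref{LMContour} block by block.

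I expect the main obstacle to be the parity bookkeeping driven by the parity of $d_1$ (and of $d_2$). When $d_1$ is even the antidiagonal of $M_{11}$ must follow the even pattern, whose two-column defect has to be absorbed either by a fixed point placed in the last row (when $d_\infty=1$) or by the run of even-length leading symbols, while ensuring that no column of the $\alpha_1$-strip acquires two copies of $\tone$. This is precisely the issue that split \tref{THTwoCycles} into its Case~I and Case~II, and I would resolve it the same way, using the offsets $\O_{1,i}$ and a deliberate choice of where the defect lands. Checking that the resulting placements of $\tone$ across $M_{11}$ and the arm blocks jointly respect conditions (d) and (e) of \lref{LMContour} is the one routine-but-delicate point of the argument.
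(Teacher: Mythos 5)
Your necessity argument is correct and complete: conditions (b) and (c) do follow from \lref{LMsubsq}(ii) applied to the index $3\in I$ with $\lambda_3=1$ (the paper instead extracts the subsquare $K=\bigcup_{i,j>1}M_{ij}$ via \tref{THStrongLCM} and quotes \tref{THTwoCycles}, but your route is equally valid), and your $\lcm$ analysis of $M_{12}$ correctly shows $c=d_1$ is the only admissible symbol cycle and hence $d_2\mid d_1$, which is the same computation underlying the paper's block diagram. Your sufficiency architecture also matches the paper: reduce to $d_\infty\in\{0,1\}$ by \lref{LMaddfix}, build $K$ by \tref{THTwoCycles}, reuse the diagonal $D$ of $M_{11}$ from Proposition~\ref{P:ThreeCycles2}, and finish $M_{1\infty},M_{\infty1}$ by \lref{LMinfrowcol}.

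The gap is the claim that $M_{12},M_{21},M_{13},M_{31}$ ``can be completed using \lref{LMgapsuff}'' because $d_2\mid d_1$ and $d_3\mid d_1$; this is where the paper's proof does nearly all of its work, and divisibility alone is not enough. First, when $d_2$ (or $d_3$) is even, \lref{LMgapnec} rules out a transversal of $M_{12}$ (resp.\ $M_{13}$) lying in contiguous rows and contiguous columns: with $h_1=h_2=0$ the left side of \eqref{Eq:gapstuff} vanishes while $\delta_g\equiv\half g\not\equiv0\pmod g$, so a gap of the correct size must be engineered somewhere. Second---and this is the difficulty that is genuinely new relative to \tref{THTwoCycles}---the two horizontal arms are not independent: the copies of $\tone$ in $M_{12}$ and in $M_{13}$ must occupy pairwise disjoint rows drawn from the same pool of only $d_2+d_3+d_\infty$ rows of $M_{11}$ not already carrying $\tone$, with $d_\infty\le1$ (symmetrically, $M_{21}$ and $M_{31}$ compete for columns). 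So the two patterns must interleave, and the gap that one pattern leaves must be exactly what makes \eqref{Eq:gapstuff} hold for the other. The paper achieves this with the wrap-around construction \eref{e:wrap} (an even pattern for $M_{21}$ occupying $d_2+1$ columns, with the $M_{31}$ pattern wrapped around it, valid precisely because $d_2+1\equiv1\pmod{d_3}$) together with collision-resolution moves by multiples of $d_3$, split into sub-cases according to whether $d_3$ divides $\half d_2$, whether $d_2$ is odd, and whether $d_1$ is odd. Deferring to ``the same way'' as Cases~I and~II of \tref{THTwoCycles} cannot substitute for this, because there only a single arm pair exists and no interleaving problem arises. Until these placements are exhibited and \eqref{Eq:gapstuff} is verified in each parity case, the sufficiency direction remains unproved.
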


\begin{proof}
We claim that the block diagram of $L$ must be as in
\fref{FILStruct3CycB}. The blocks $M_{i\infty}$ and $M_{\infty i}$,
for $1\le i\le3$, are forced by \lref{LMinfrowcol}.
Let $K=\bigcup_{i,j>1}M_{ij}$.  \tref{THStrongLCM}, with $\Lambda$
as the set of divisors of $d_2$, implies $K$ is a subsquare that 
obeys \tref{THTwoCycles}.
In particular, its block diagram
mirrors \fref{FILStruct}. It is then easy to complete the rest
of the diagram in \fref{FILStruct3CycB}.


\begin{figure}[htb]
\narrowcols
\begin{displaymath}
\begin{array}{l|l|l|l|l|}
    &\ \alpha_1&\ \alpha_2&\ \alpha_3&\ \alpha_\infty\\
    \hline
    \alpha_1&
        \begin{array}{l} \alpha_1:d_1-d_2-d_3-d_\infty\\
\alpha_2:d_1\\ \alpha_3:d_1\\ \alpha_\infty: d_1 \end{array}&
        \begin{array}{l} \alpha_1:d_2\\ \phantom{x}\\ \phantom{x}\\
\phantom{x} \end{array}&
        \begin{array}{l} \alpha_1:d_3\\ \phantom{x}\\ \phantom{x}\\
\phantom{x} \end{array}&
        \begin{array}{l} \alpha_1:d_\infty\\ \phantom{x}\\
\phantom{x}\\ \phantom{x} \end{array}\\
    \hline
    \alpha_2&
        \begin{array}{l} \alpha_1:d_2\\ \phantom{x}\\ \phantom{x}\end{array}&
        \begin{array}{l} \alpha_2:d_2-d_3-d_\infty\\ \alpha_3:d_2\\
\alpha_\infty:d_2\end{array}&
        \begin{array}{l} \alpha_2:d_3\\ \phantom{x}\\ \phantom{x} \end{array}&
        \begin{array}{l} \alpha_2:d_\infty\\ \phantom{x}\\ \phantom{x}
\end{array}\\
    \hline
    \alpha_3&
        \begin{array}{l} \alpha_1:d_3\\ \phantom{x} \end{array}&
        \begin{array}{l} \alpha_2:d_3\\ \phantom{x} \end{array}&
        \begin{array}{l} \alpha_3:d_3-d_\infty\\ \alpha_\infty:d_3 \end{array}&
        \begin{array}{l} \alpha_3:d_\infty\\ \phantom{x} \end{array}\\
    \hline
    \alpha_\infty&
        \begin{array}{l} \alpha_1:d_\infty \end{array}&
        \begin{array}{l} \alpha_2:d_\infty \end{array}&
        \begin{array}{l} \alpha_3:d_\infty \end{array}&
        \begin{array}{l} \alpha_\infty:d_\infty \end{array}\\
    \hline
\end{array}
\end{displaymath}
\normalcols
\caption{\label{FILStruct3CycB}Block diagram of $L$ with $d_1>d_2>d_3$
the only nontrivial cycle lengths, where $d_3$ divides $d_2$.}
\end{figure}

A consequence of $\alpha_1$ appearing in $M_{21}$ in the block diagram
of $L$, is that $d_2$ divides $d_1$, establishing (a). Since $K$ is a
subsquare, (b) and (c) follow from \tref{THTwoCycles}.

Conversely, if (a)--(c) hold then the subsquare $K$ can be constructed
by \tref{THTwoCycles}. It thus suffices to provide a partial contour for
$L\setminus K$. As usual, employing \lref{LMaddfix}, we may assume that
$d_\infty=1$ if $d_3$ is even and $d_\infty=0$ if $d_3$ is odd. 

We define $D$ and the partial contour for the block $M_{11}$
exactly as we did in Proposition~\ref{P:ThreeCycles2}.


Case I: $d_3$ is even (and $d_\infty=1$). Then $d_1$, $d_2$, $d_3$ are 
all even. There are $d_2+d_3+1$ consecutive columns in $D$ that do not
contain $t_1$. Utilizing these columns, we place an even pattern
filled with symbols $t_1$ into $M_{21}$ (occupying $d_2+1$ columns),
and ``wrap around it'' an even pattern filled with symbols $t_1$ in
$M_{31}$. Specifically, we define 
\begin{equation}\label{e:wrap}
\begin{array}{llll}
&\C(d_1+i,\half d_1+d_2-\half d_3+1-i)=t_1&&\text{for }1\le i\le\half d_2,\\[0.33ex]
&\C(d_1+i,\half d_1+d_2-\half d_3-i)=t_1&&\text{for }\half d_2<i\le d_2,\\[1.5ex]
&\C(d_1+d_2+i,\half d_1+d_2+1-i)=t_1&&\text{for }1\le i\le\half d_3,\\[0.33ex]
&\C(d_1+d_2+i,\half d_1-i)=t_1&\phantom{wider}&\text{for }\half d_3<i\le d_3.
\end{array}
\end{equation}
See \fref{FIGB} for examples. This wrap-around construction works here
because the gap in the even pattern in $M_{31}$ has size 
$d_2+1\equiv1\pmod{d_3}$.

Since there are only $d_2+d_3$ consecutive rows without symbols $t_1$
in $D$, we will use a modified wrap-around construction in $M_{12}\cup
M_{13}$. Namely, we transpose the partial contour in \eref{e:wrap} and
slide it vertically so that only the top symbol $t_1$ in $M_{13}$
collides with $M_{11}$. If $d_3$ divides $d_2/2$, we move this
colliding symbol down to row $d_1$, i.e., by $d_2+d_1/2$ rows, a
multiple of $d_3$. If $d_3$ does not divide $d_2/2$, we have
$d_2=(2k+1)d_3$ for some $k$, and we move the colliding symbol down to
the row corresponding to the gap in the even pattern in $M_{12}$,
i.e., by $d_2/2+d_3/2 = (k+1)d_3$ rows, again a multiple of
$d_3$. Figure \ref{FIGB} illustrates both possibilities, with the
moved colliding symbol highlighted.

\begin{figure}[htb]
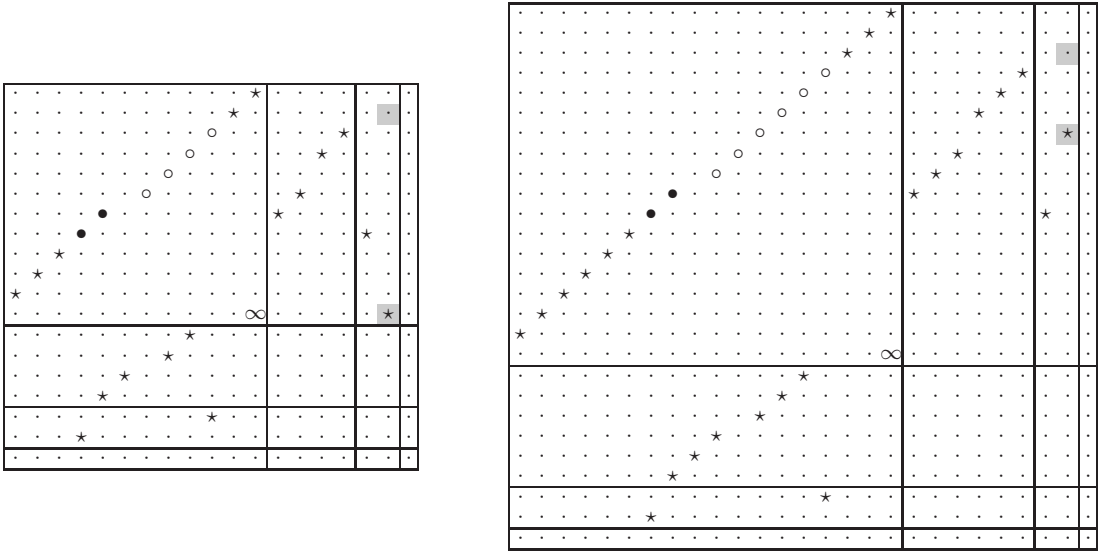

\begin{displaymath}
\verynarrowcols
\begin{array}{|cccccccccccc|cccc|cc|c|}
    \hline
    \xxx{11}&\xa&\xxx{7}\\
    \xxx{10}&\xa&\xxx{6}&\c\xx&\xx\\
    \xxx{9}&\xb&\xxx{5}&\xa&\xxx{3}\\
    \xxx{8}&\xb&\xxx{5}&\xa&\xxx{4}\\
    \xxx{7}&\xb&\xxx{11}\\
    \xxx{6}&\xb&\xxx{6}&\xa&\xxx{5}\\
    \xxx{4}&\xc&\xxx{7}&\xa&\xxx{6}\\
    \xxx{3}&\xc&\xxx{12}&\xa&\xxx{2}\\
    \xxx{2}&\xa&\xxx{16}\\
    \xx&\xa&\xxx{17}\\
    \xa&\xxx{18}\\
    \xxx{11}&\xi&\xxx{5}&\c\xa&\xx\\
    \hline
    \xxx{8}&\xa&\xxx{10}\\
    \xxx{7}&\xa&\xxx{11}\\
    \xxx{5}&\xa&\xxx{13}\\
    \xxx{4}&\xa&\xxx{14}\\
    \hline
    \xxx{9}&\xa&\xxx{9}\\
    \xxx{3}&\xa&\xxx{15}\\
    \hline
    \xxx{19}\\
    \hline
\end{array}
\qquad\qquad
\begin{array}{|cccccccccccccccccc|cccccc|cc|c|}
    \hline
    \xxx{17}&\xa&\xxx{9}\\
    \xxx{16}&\xa&\xxx{10}\\
    \xxx{15}&\xa&\xxx{9}&\c\xx&\xx\\
    \xxx{14}&\xb&\xxx{8}&\xa&\xxx{3}\\
    \xxx{13}&\xb&\xxx{8}&\xa&\xxx{4}\\
    \xxx{12}&\xb&\xxx{8}&\xa&\xxx{5}\\
    \xxx{11}&\xb&\xxx{13}&\c\xa&\xx\\
    \xxx{10}&\xb&\xxx{9}&\xa&\xxx{6}\\
    \xxx{9}&\xb&\xxx{9}&\xa&\xxx{7}\\
    \xxx{7}&\xc&\xxx{10}&\xa&\xxx{8}\\
    \xxx{6}&\xc&\xxx{17}&\xa&\xxx{2}\\
    \xxx{5}&\xa&\xxx{21}\\
    \xxx{4}&\xa&\xxx{22}\\
    \xxx{3}&\xa&\xxx{23}\\
    \xxx{2}&\xa&\xxx{24}\\
    \xx&\xa&\xxx{25}\\
    \xa&\xxx{26}\\
    \xxx{17}&\xi&\xxx{9}\\
    \hline
    \xxx{13}&\xa&\xxx{13}\\
    \xxx{12}&\xa&\xxx{14}\\
    \xxx{11}&\xa&\xxx{15}\\
    \xxx{9}&\xa&\xxx{17}\\
    \xxx{8}&\xa&\xxx{18}\\
    \xxx{7}&\xa&\xxx{19}\\
    \hline
    \xxx{14}&\xa&\xxx{12}\\
    \xxx{6}&\xa&\xxx{20}\\
    \hline
    \xxx{27}\\
    \hline
\end{array}
\verynormalcols
\end{displaymath}
\caption{\label{FIGB}Partial contours for
$d_1=12$, $d_2=4$, $d_3=2$, $d_\infty=1$, 
and $d_1=18$, $d_2=6$, $d_3=2$, $d_\infty=1$.}
\end{figure}

Case II: $d_3$ is odd (and $d_\infty=0$). If $d_1$ is odd, it is
straightforward to align odd patterns in $M_{21}\cup M_{31}$ and
$M_{12}\cup M_{13}$ with, respectively, the columns and rows of $D$
that do not contain $t_1$. We can therefore assume that $d_1$ is even.

For now, assume $d_2$ is even; an example of a
partial contour in this case is given in \fref{FIGB2}. Then $d_3$
divides $d_2/2$. We place an even pattern into the $d_2+1$ right-most
available columns of $M_{21}$, and we position an odd pattern
immediately to the left of it in $M_{31}$. Since there are now only
$d_2+d_3$ columns without $t_1$ in $D$, the left-most symbol in the
partial contour of $M_{31}$ collides with $M_{11}$, and we move it to
the column corresponding to the break in the even pattern of $M_{21}$,
i.e., by $d_3+d_2/2$ columns, a multiple of $d_3$. Now we place an
even pattern into the $d_2+1$ top-most available rows of $M_{12}$, and
we position an odd pattern into $M_{13}$ so that the top symbol $t_1$
in $M_{13}$ collides with the bottom symbol $t_1$ in $M_{12}$. This
colliding element can be moved into the row corresponding to the gap
in the even pattern in $M_{12}$, i.e., by $d_2/2$ rows, a multiple of
$d_3$. However, the bottom symbol $t_1$ in $M_{13}$ still collides
with $M_{11}$, and we move it to row $d_1$, i.e., by $d_1/2-d_3$ rows,
a multiple of $d_3$.

Finally, suppose that $d_2$ is odd. Since we can place odd patterns into
$M_{12}$, $M_{13}$, $M_{21}$, $M_{31}$, the blocks $M_{21}\cup M_{31}$
can be filled easily, with $d_2+d_3$ columns of $D$ at our
disposal. Place the odd pattern in $M_{12}$ as high as possible, and
the odd pattern in $M_{13}$ immediately below it, so that only the
bottom symbol $t_1$ in $M_{13}$ collides with $M_{11}$. This colliding
element can again be moved to row $d_1$, i.e., by $d_1/2-d_3$ rows, a
multiple of $d_3$.
\end{proof}

\begin{figure}[htb]
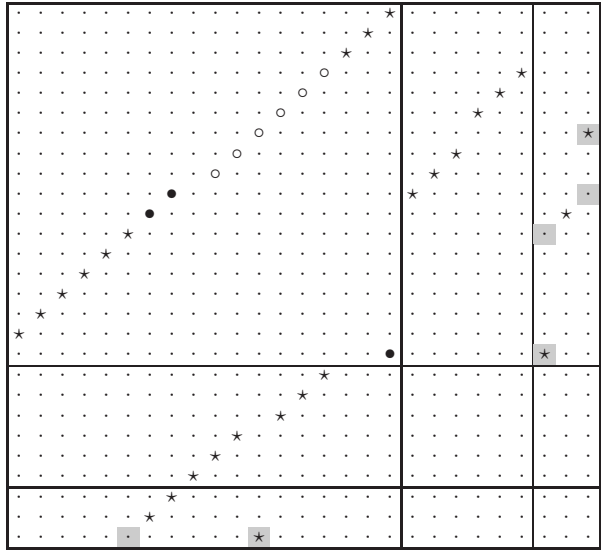

\begin{displaymath}
\verynarrowcols
\begin{array}{|cccccccccccccccccc|cccccc|ccc|}
    \hline
    \xxx{17}&\xa&\xxx{9}\\
    \xxx{16}&\xa&\xxx{10}\\
    \xxx{15}&\xa&\xxx{11}\\
    \xxx{14}&\xb&\xxx{8}&\xa&\xxx{3}\\
    \xxx{13}&\xb&\xxx{8}&\xa&\xxx{4}\\
    \xxx{12}&\xb&\xxx{8}&\xa&\xxx{5}\\
    \xxx{11}&\xb&\xxx{14}&\c\xa\\
    \xxx{10}&\xb&\xxx{9}&\xa&\xxx{6}\\
    \xxx{9}&\xb&\xxx{9}&\xa&\xxx{7}\\
    \xxx{7}&\xc&\xxx{10}&\xa&\xxx{7}&\c\xx\\
    \xxx{6}&\xc&\xxx{18}&\xa&\xx\\
    \xxx{5}&\xa&\xxx{18}&\c\xx&\xxx{2}\\
    \xxx{4}&\xa&\xxx{22}\\
    \xxx{3}&\xa&\xxx{23}\\
    \xxx{2}&\xa&\xxx{24}\\
    \xx&\xa&\xxx{25}\\
    \xa&\xxx{26}\\
    \xxx{17}&\xc&\xxx{6}&\c\xa&\xxx{2}\\
    \hline
    \xxx{14}&\xa&\xxx{12}\\
    \xxx{13}&\xa&\xxx{13}\\
    \xxx{12}&\xa&\xxx{14}\\
    \xxx{10}&\xa&\xxx{16}\\
    \xxx{9}&\xa&\xxx{17}\\
    \xxx{8}&\xa&\xxx{18}\\
    \hline
    \xxx{7}&\xa&\xxx{19}\\
    \xxx{6}&\xa&\xxx{20}\\
    \xxx{5}&\c\xx&\xxx{5}&\c\xa&\xxx{15}\\
    \hline
\end{array}
\verynormalcols
\end{displaymath}
\caption{\label{FIGB2}Partial contour for $d_1=18$, $d_2=6$, $d_3=3$, $d_\infty=0$.}
\end{figure}

\section{Autotopisms of small Latin squares}\label{Sc:Computational}

Falc\'{o}n \cite{Falcon2009} identified $\autotops{n}$ for $n\le
11$. The elements of $\autotops{n}$ for $n \leq 17$ are given in
Appendix~\ref{APCycStr}.  A representative Latin square that admits
each claimed autotopism is given at \cite{WWWW},
along with the \verb!GAP! code \cite{GAP} used in this project.

In \cite{Falcon2009}, Falc\'{o}n listed six isotopisms $\theta$, that
are not equivalent in the sense of Lemma~\ref{LMAutConj}, for which he
proved computationally that $\theta\notin\autotops{n}$ but no
theoretical reason was known.  Five of these cases are resolved
theoretically by Corollary~\ref{COSmithKerby}. The remaining case has
cycle structure $(4\tdot2,4\tdot2,4\tdot1^2)$. It is simple to check
by hand that such an autotopism is not possible, although for reasons
that seem peculiar to this example. Applying Theorem~\ref{THStrongLCM}
we see that there is also no autotopism in $\autotops{14}$ with cycle
structure $(8\tdot4\tdot2,8\tdot4\tdot2,8\tdot4\tdot1^2)$. We also
observe that the example after \lref{LMLCMPerm} shows that there is no
automorphism in $\automorphs{17}$ with cycle structure
$6\tdot3\tdot2^4$.

With the exception of the special cases just discussed, every
isotopism $\theta \in \I_n$ for $n \leq 17$ either belongs to
$\autotops{n}$ or can be shown to have $\theta\notin\autotops{n}$
using Lemma~\ref{LMLcmAB}, Theorem~\ref{THStrongLCM} or
Corollary~\ref{COSmithKerby}.


\section{Concluding remarks}

We conclude this paper with some future research ideas. While it is
known that the probability that a random Latin square admits a
nontrivial autotopism is asymptotically zero \cite{McKayWanless2005},
we propose the following conjecture.

\begin{conj}
For $n>0$ let $\mathbb{P}(n)$ be the probability that a randomly
chosen $\alpha\in S_n$ is a component of some isotopism
$(\alpha,\beta,\gamma)\in\autotops{n}$. Then $\lim_{n\to\infty}
\mathbb{P}(n)=0$.
\end{conj}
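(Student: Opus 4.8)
The plan is to isolate a single, easily checkable obstruction to being a component of an autotopism, and then to show that a uniformly random $\alpha\in S_n$ meets this obstruction with probability tending to $1$. Since membership of $(\alpha,\beta,\gamma)$ in $\autotops n$ depends only on cycle structures (Section~\ref{Sc:Equivalence}), and since by \lref{LMAutConj}(ii) we may pass to a parastrophe that places $\alpha$ in the first coordinate, it suffices to bound the probability that a given $\alpha$ can be completed to an autotopism $(\alpha,\beta,\gamma)$. For a prime $p$, call the number of points of a permutation that lie in cycles whose length is divisible by $p$ its \emph{$p$-weight}. The set $\Lambda_p=\{m:p\nmid m\}$ is strongly $\lcm$-closed (indeed $p\nmid\lcm(a,b)$ iff $p\nmid a$ and $p\nmid b$), and $R_{\Lambda_p}$ is exactly $n$ minus the $p$-weight of $\alpha$.

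The structural heart of the argument is the following obstruction: \emph{if some prime $p$ with $p\nmid n$ has the property that the $p$-weight of $\alpha$ lies strictly between $0$ and $\half n$, then $\alpha$ is not a component of any autotopism.} I would prove this exactly in the style of the other applications of \tref{THStrongLCM}. Suppose $(\alpha,\beta,\gamma)\in\autt(L)$. Then $\half n<R_{\Lambda_p}<n$ and $R_{\Lambda_p}\neq\emptyset$. If either $C_{\Lambda_p}$ or $S_{\Lambda_p}$ is nonempty, \tref{THStrongLCM} forces a subsquare of order $R_{\Lambda_p}>\half n$, which by \lref{LMMaxSize} must have order $n$, contradicting $R_{\Lambda_p}<n$. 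Hence $C_{\Lambda_p}=S_{\Lambda_p}=\emptyset$, meaning every cycle of $\beta$ and of $\gamma$ has length divisible by $p$; summing cycle lengths then gives $p\mid n$, contradicting $p\nmid n$. (The trivial autotopism causes no trouble, since its only component is $\id$, whose $p$-weight is $0$ for every $p$.)

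It then remains to show that a random $\alpha$ satisfies the hypothesis of the obstruction with probability $\to1$. A convenient sufficient event is that $\alpha$ possesses a cycle of length $m\in(n^{0.6},\half n)$ having a prime factor $p>n^{0.3}$ that divides neither $n$ nor the length of any other cycle of $\alpha$; for such $p$ the $p$-weight of $\alpha$ equals $m\in(0,\half n)$. I would verify this event holds with high probability in two stages. First, the expected number of cycles with length in $(n^{0.6},\half n)$ is $\sum_{n^{0.6}<j<n/2}\tfrac1j=(\tfrac25+o(1))\log n\to\infty$, and this count concentrates (via the Feller coupling, or the Poisson--Dirichlet description of the normalised cycle lengths), so with high probability there are $\Theta(\log n)$ such ``medium'' cycles. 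Second, a Dickman--de~Bruijn estimate shows that a positive proportion of integers in $(n^{0.6},\half n)$ have a prime factor exceeding $\sqrt m>n^{0.3}$, and such a large prime divides $n$ or another (in particular the largest) cycle length only with probability $O(n^{-0.3}\log^2 n)\to0$ after a union bound. Combining these, with high probability at least one medium cycle supplies a witnessing prime, whence $\mathbb P(n)\le\Pr[\alpha=\id]+\Pr[\text{no witnessing }p]\to0$.

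The delicate part is entirely in the last paragraph. The cycle lengths of a random permutation are neither mutually independent nor independent of their prime factorisations, so turning ``some medium cycle has a private large prime factor'' into a rigorous high-probability statement requires marrying the concentration of the medium-cycle count with a careful union bound that controls the joint divisibility relations among the few large cycle lengths and $n$. I expect this probabilistic and number-theoretic estimate, rather than the structural obstruction, to be the main obstacle; the obstruction itself drops out cleanly from \tref{THStrongLCM}, \lref{LMMaxSize}, and \lref{LMAutConj}.
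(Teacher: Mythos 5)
A preliminary remark: this statement is not proved in the paper at all --- it is posed as an open conjecture in the concluding remarks --- so there is no proof of the authors' to compare yours against; a rigorous argument along your lines would settle an open problem. Judged on its own terms, the first half of your proposal is correct and complete: $\Lambda_p=\{m\in\mathbb{N}:p\nmid m\}$ is strongly lcm-closed, $|R_{\Lambda_p}|$ equals $n$ minus the $p$-weight of $\alpha$, and if the $p$-weight lies strictly between $0$ and $\half n$ for some prime $p\nmid n$, then either \tref{THStrongLCM} yields a subsquare of order strictly between $\half n$ and $n$, contradicting \lref{LMMaxSize}, or else $C_{\Lambda_p}=S_{\Lambda_p}=\emptyset$, so every cycle length of $\beta$ is divisible by $p$ and hence $p\mid n$. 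That obstruction is a clean and valid consequence of the paper's machinery.

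The second half is where the genuine gap lies, as you yourself flag. None of the probabilistic assertions is actually proved: the concentration of the medium-cycle count, the logarithmic-density lower bound for integers in $(n^{0.6},\half n)$ having a prime factor exceeding $n^{0.3}$, and the claimed bound $O(n^{-0.3}\log^2 n)$ are all stated without derivation. Moreover, the union bound as you formulate it targets the wrong event: the witnessing prime $p$ must divide \emph{no other} cycle length whatsoever, not merely avoid $n$ and the largest cycle, and $\alpha$ may have as many as $n^{0.7}$ cycles of length exceeding $n^{0.3}$, so a union bound over individual cycles does not close in the form you describe. The repair is standard but must be carried out: bound the expected number of \emph{pairs} of cycles whose lengths share a common prime factor $p>n^{0.3}$ (using the fact that the expected number of pairs of cycles of lengths $j$ and $k$ is at most $1/(jk)$, this expectation is at most roughly $(\log n/p)^2$ for fixed $p$, and summing over primes $p>n^{0.3}$ gives $o(1)$); bound likewise the expected number of cycles whose length is divisible by one of the at most three primes exceeding $n^{0.3}$ that divide $n$; and run a second-moment (Chebyshev) argument on the number of cycles whose length lies in the non-smooth part of $(n^{0.6},\half n)$. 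With those three estimates your program does appear to go through, but as submitted the proposal is a plan rather than a proof of the conjecture.
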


Motivated by the results of Falc\'on \cite{Falcon2009}, we have
verified computationally that the following question has an
affirmative answer for all primes $p\le 23$.

\begin{prob}\label{PRPrimeNCycStr}
Let $\theta=(\alpha$, $\beta$, $\gamma) \in \autotops{p}$ for some
prime $p$.  Must it be true that either $\theta$ is equivalent to
$(\delta$, $\delta$, $\id)$ where $\delta$ is a $p$-cycle, or that
$\alpha$, $\beta$ and $\gamma$ all have the same cycle structure?
\end{prob}

Horo\v{s}evski\u{\i} proved \cite[Theorem 2]{Hor} that if $G$ is a
group of order $n>1$ and $\varphi$ is an automorphism of $G$ then the
order of $\varphi$ cannot exceed $n-1$. Motivated by \cite{Hor} and by
our computational results, we ask:

\begin{prob}
Suppose $\theta$ is an autotopism of a Latin square $L$ of order $n$.
Is the order of $\theta$ at most $n$?
\end{prob}

\section*{Acknowledgements} The authors are grateful to Chris Mears,
who provided some assistance in finding Latin squares with a specified
autotopism (see \cite{Mears2009}), and to Michael Kinyon, who helped with the review of the
loop-theoretical literature concerned with autotopisms. The authors are
also grateful to the referees for their diligence and their useful
feedback.

  \let\oldthebibliography=\thebibliography
  \let\endoldthebibliography=\endthebibliography
  \renewenvironment{thebibliography}[1]{%
    \begin{oldthebibliography}{#1}%
      \setlength{\parskip}{0.4ex plus 0.1ex minus 0.1ex}%
      \setlength{\itemsep}{0.4ex plus 0.1ex minus 0.1ex}%
  }%
  {%
    \end{oldthebibliography}%
  }

\newpage

\appendix

\section{Autotopism cycle structures for orders up to 17}\label{APCycStr}

Appealing to Lemma~\ref{LMAutConj}, we only list cycle structures
$(a,b,c)$ of autotopisms $(\alpha,\beta,\gamma)$. For a given order
$n$, the first column gives the cycle structure of $\alpha$. In a
given row, the second column gives all possible cycle structures of
$\beta$ and $\gamma$, separated by commas. If $\beta$ and $\gamma$
have the same cycle structure, we only list the cycle structure of
$\beta$, else we give the cycle structures of $\beta$ and $\gamma$ as
an ordered pair in parentheses.

\[
\begin{array}{ccccc}
\verynarrowcols\begin{array}{|r|r|}
\hline
\tspacer n=1 & \\
\alpha & \beta\text{ and }\gamma\bspacer\\
\hline\tspacer
1 & 1 \\
\hline
\hline
\tspacer n=2 & \\
\alpha & \beta\text{ and }\gamma\bspacer\\
\hline\tspacer
1^2 & 1^2,2 \\
\hline
\hline
\tspacer n=3 & \\
\alpha & \beta\text{ and }\gamma\bspacer\\
\hline\tspacer
1^3 & 1^3,3 \\
2 \tdot 1 & 2 \tdot 1 \\
3 & 3 \\
\hline
\hline
\tspacer n=4 & \\
\alpha & \beta\text{ and }\gamma\bspacer\\
\hline\tspacer
1^4 & 1^4,2^2,4 \\
2 \tdot 1^2 & 2 \tdot 1^2,2^2,4 \\
2^2 & 2^2,4 \\
3 \tdot 1 & 3 \tdot 1 \\
\hline
\hline
\tspacer n=5 & \\
\alpha & \beta\text{ and }\gamma\bspacer\\
\hline\tspacer
1^5 & 1^5,5 \\
2^2 \tdot 1 & 2^2 \tdot 1 \\
3 \tdot 1^2 & 3 \tdot 1^2 \\
4 \tdot 1 & 4 \tdot 1 \\
5 & 5 \\
\hline
\hline
\tspacer n=6 & \\
\alpha & \beta\text{ and }\gamma\bspacer\\
\hline\tspacer
1^6 & 1^6,2^3,3^2,6 \\
2 \tdot 1^4 & 2^3,6 \\
2^2 \tdot 1^2 & 2^2 \tdot 1^2,2^3,6 \\
2^3 & (3^2,6) \\
3 \tdot 1^3 & 3 \tdot 1^3,3^2,6 \\
3 \tdot 2 \tdot 1 & 6 \\
3^2 & 3^2,6 \\
4 \tdot 1^2 & 4 \tdot 1^2 \\
5 \tdot 1 & 5 \tdot 1 \\
\hline
\end{array}\verynormalcols

&&

\verynarrowcols\begin{array}{|r|r|}
\hline
\tspacer n=7 & \\
\alpha & \beta\text{ and }\gamma\bspacer\\
\hline\tspacer
1^7 & 1^7,7 \\
2^2 \tdot 1^3 & 2^2 \tdot 1^3 \\
2^3 \tdot 1 & 2^3 \tdot 1 \\
3^2 \tdot 1 & 3^2 \tdot 1 \\
4 \tdot 1^3 & 4 \tdot 1^3 \\
4 \tdot 2 \tdot 1 & 4 \tdot 2 \tdot 1 \\
5 \tdot 1^2 & 5 \tdot 1^2 \\
6 \tdot 1 & 6 \tdot 1 \\
7 & 7 \\
\hline
\hline
\tspacer n=8 & \\
\alpha & \beta\text{ and }\gamma\bspacer\\
\hline\tspacer
1^8 & 1^8,2^4,4^2,8 \\
2 \tdot 1^6 & 2^4,4^2,8 \\
2^2 \tdot 1^4 & 2^2 \tdot 1^4,2^4,4^2,8 \\
2^3 \tdot 1^2 & 2^3 \tdot 1^2,2^4,4^2,8 \\
2^4 & 2^4,4^2,8 \\
3^2 \tdot 1^2 & 3^2 \tdot 1^2,6 \tdot 2 \\
4 \tdot 1^4 & 4 \tdot 1^4,4 \tdot 2^2,4^2,8 \\
4 \tdot 2 \tdot 1^2 & 4 \tdot 2 \tdot 1^2,4 \tdot 2^2,4^2,8 \\
4 \tdot 2^2 & 4 \tdot 2^2,4^2,8 \\
4^2 & 4^2,8 \\
5 \tdot 1^3 & 5 \tdot 1^3 \\
6 \tdot 1^2 & 6 \tdot 1^2,6 \tdot 2 \\
7 \tdot 1 & 7 \tdot 1 \\
\hline
\hline
\tspacer n=9 & \\
\alpha & \beta\text{ and }\gamma\bspacer\\
\hline\tspacer
1^9 & 1^9,3^3,9 \\
2^3 \tdot 1^3 & 2^3 \tdot 1^3,6 \tdot 3 \\
2^4 \tdot 1 & 2^4 \tdot 1 \\
3 \tdot 1^6 & 3^3,9 \\
3 \tdot 2^3 & 6 \tdot 3 \\
3^2 \tdot 1^3 & 3^2 \tdot 1^3,3^3,9 \\
3^3 & 3^3,9 \\
4^2 \tdot 1 & 4^2 \tdot 1 \\
5 \tdot 1^4 & 5 \tdot 1^4 \\
6 \tdot 1^3 & 6 \tdot 1^3,6 \tdot 3 \\
6 \tdot 2 \tdot 1 & 6 \tdot 2 \tdot 1 \\
6 \tdot 3 & 6 \tdot 3 \\
7 \tdot 1^2 & 7 \tdot 1^2 \\
8 \tdot 1 & 8 \tdot 1 \\
9 & 9 \\
\hline
\end{array}\verynormalcols

&&

\verynarrowcols\begin{array}{|r|r|}
\hline
\tspacer n=10 & \\
\alpha & \beta\text{ and }\gamma\bspacer\\
\hline\tspacer
1^{10} & 1^{10},2^5,5^2,10 \\
2 \tdot 1^8 & 2^5,10 \\
2^2 \tdot 1^6 & 2^5,10 \\
2^3 \tdot 1^4 & 2^3 \tdot 1^4,2^5,10 \\
2^4 \tdot 1^2 & 2^4 \tdot 1^2,2^5,10 \\
2^5 & (5^2,10) \\
3^2 \tdot 1^4 & 3^2 \tdot 1^4,6 \tdot 2^2 \\
3^2 \tdot 2 \tdot 1^2 & 6 \tdot 2^2 \\
3^2 \tdot 2^2 & 6 \tdot 2^2 \\
3^3 \tdot 1 & 3^3 \tdot 1 \\
4^2 \tdot 1^2 & 4^2 \tdot 1^2,4^2 \tdot 2 \\
5 \tdot 1^5 & 5 \tdot 1^5,5^2,10 \\
5 \tdot 2 \tdot 1^3 & 10 \\
5 \tdot 2^2 \tdot 1 & 10 \\
5^2 & 5^2,10 \\
6 \tdot 1^4 & 6 \tdot 1^4,6 \tdot 2^2 \\
6 \tdot 2 \tdot 1^2 & 6 \tdot 2 \tdot 1^2,6 \tdot 2^2 \\
6 \tdot 3 \tdot 1 & 6 \tdot 3 \tdot 1 \\
7 \tdot 1^3 & 7 \tdot 1^3 \\
8 \tdot 1^2 & 8 \tdot 1^2,8 \tdot 2 \\
9 \tdot 1 & 9 \tdot 1 \\
\hline
\hline
\tspacer n=11 & \\
\alpha & \beta\text{ and }\gamma\bspacer\\
\hline\tspacer
1^{11} & 1^{11},11 \\
2^3 \tdot 1^5 & 2^3 \tdot 1^5 \\
2^4 \tdot 1^3 & 2^4 \tdot 1^3 \\
2^5 \tdot 1 & 2^5 \tdot 1 \\
3^2 \tdot 1^5 & 3^2 \tdot 1^5 \\
3^3 \tdot 1^2 & 3^3 \tdot 1^2 \\
4^2 \tdot 1^3 & 4^2 \tdot 1^3 \\
4^2 \tdot 2 \tdot 1 & 4^2 \tdot 2 \tdot 1 \\
5^2 \tdot 1 & 5^2 \tdot 1 \\
6 \tdot 1^5 & 6 \tdot 1^5 \\
6 \tdot 2^2 \tdot 1 & 6 \tdot 2^2 \tdot 1 \\
6 \tdot 3 \tdot 1^2 & 6 \tdot 3 \tdot 1^2 \\
7 \tdot 1^4 & 7 \tdot 1^4 \\
8 \tdot 1^3 & 8 \tdot 1^3 \\
8 \tdot 2 \tdot 1 & 8 \tdot 2 \tdot 1 \\
9 \tdot 1^2 & 9 \tdot 1^2 \\
10 \tdot 1 & 10 \tdot 1 \\
11 & 11 \\
\hline
\end{array}\verynormalcols

\end{array}
\]

\[
\begin{array}{ccccc}

\verynarrowcols\begin{array}{|r|r|}
\hline
\tspacer n=12 & \\
\alpha & \beta\text{ and }\gamma\bspacer\\
\hline\tspacer
1^{12} & 1^{12},2^6,3^4,4^3,6^2,12 \\
2 \tdot 1^{10} & 2^6,4^3,6^2,12 \\
2^2 \tdot 1^8 & 2^6,4^3,6^2,12 \\
2^3 \tdot 1^6 & 2^3 \tdot 1^6,2^6,4^3,6 \tdot 3^2,6^2,12 \\
2^4 \tdot 1^4 & 2^4 \tdot 1^4,2^6,4^3,6^2,12 \\
2^5 \tdot 1^2 & 2^5 \tdot 1^2,2^6,4^3,6^2,12 \\
2^6 & 2^6,(3^4,6^2),4^3,(6 \tdot 3^2,6^2),6^2,12 \\
3 \tdot 1^9 & 3^4,6^2,12 \\
3 \tdot 2 \tdot 1^7 & 6^2,12 \\
3 \tdot 2^2 \tdot 1^5 & 6^2,12 \\
3 \tdot 2^3 \tdot 1^3 & 6 \tdot 3^2,6^2,12 \\
3 \tdot 2^4 \tdot 1 & 6^2,12 \\
3^2 \tdot 1^6 & 3^2 \tdot 1^6,3^4,6 \tdot 2^3,6^2,12 \\
3^2 \tdot 2 \tdot 1^4 & 6 \tdot 2^3,6^2,12 \\
3^2 \tdot 2^2 \tdot 1^2 & 6 \tdot 2^3,6^2,12 \\
3^2 \tdot 2^3 & (3^2 \tdot 2^3,6 \tdot 1^6),6 \tdot 3^2,6^2,12 \\
3^3 \tdot 1^3 & 3^3 \tdot 1^3,3^4,6^2,12 \\
3^3 \tdot 2 \tdot 1 & 6^2,12 \\
3^4 & 3^4,(4^3,12),(6 \tdot 2^3,6^2),6^2,12 \\
4 \tdot 1^8 & 4^3,12 \\
4 \tdot 2 \tdot 1^6 & 4^3,12 \\
4 \tdot 2^2 \tdot 1^4 & 4^3,12 \\
4 \tdot 2^3 \tdot 1^2 & 4^3,12 \\
4 \tdot 2^4 & 4^3,12 \\
4 \tdot 3 \tdot 1^5 & 12 \\
4 \tdot 3 \tdot 2 \tdot 1^3 & 12 \\
4 \tdot 3 \tdot 2^2 \tdot 1 & 12 \\
4 \tdot 3^2 \tdot 1^2 & 12 \\
4 \tdot 3^2 \tdot 2 & 12 \\
4^2 \tdot 1^4 & 4^2 \tdot 1^4,4^2 \tdot 2^2,4^3,12 \\
4^2 \tdot 2 \tdot 1^2 & 4^2 \tdot 2 \tdot 1^2,4^2 \tdot 2^2,4^3,12 \\
4^2 \tdot 2^2 & 4^2 \tdot 2^2,4^3,12 \\
4^2 \tdot 3 \tdot 1 & 12 \\
4^3 & (6 \tdot 3^2,12),(6^2,12) \\
5^2 \tdot 1^2 & 5^2 \tdot 1^2,10 \tdot 2 \\
6 \tdot 1^6 & 6 \tdot 1^6,6 \tdot 3^2,6^2,12 \\
6 \tdot 2 \tdot 1^4 & 6^2,12 \\
6 \tdot 2^2 \tdot 1^2 & 6 \tdot 2^2 \tdot 1^2,6^2,12 \\
6 \tdot 2^3 & (6 \tdot 3^2,6^2),6^2,12 \\
6 \tdot 3 \tdot 1^3 & 6 \tdot 3 \tdot 1^3,6 \tdot 3^2,6^2,12 \\
6 \tdot 3 \tdot 2 \tdot 1 & 6 \tdot 3 \tdot 2 \tdot 1,6^2,12 \\
6 \tdot 3^2 & 6 \tdot 3^2,6^2,12 \\
6 \tdot 4 \tdot 1^2 & 12 \\
6 \tdot 4 \tdot 2 & 12 \\
6^2 & 6^2,12 \\
7 \tdot 1^5 & 7 \tdot 1^5 \\
8 \tdot 1^4 & 8 \tdot 1^4,8 \tdot 2^2,8 \tdot 4 \\
8 \tdot 2 \tdot 1^2 & 8 \tdot 2 \tdot 1^2,8 \tdot 2^2,8 \tdot 4 \\
8 \tdot 2^2 & 8 \tdot 2^2,8 \tdot 4 \\
9 \tdot 1^3 & 9 \tdot 1^3,9 \tdot 3 \\
9 \tdot 3 & 9 \tdot 3 \\
10 \tdot 1^2 & 10 \tdot 1^2,10 \tdot 2 \\
11 \tdot 1 & 11 \tdot 1 \\
\hline
\end{array}\verynormalcols

&&

\verynarrowcols\begin{array}{|r|r|}
\hline
\tspacer n=13 & \\
\alpha & \beta\text{ and }\gamma\bspacer\\
\hline\tspacer
1^{13} & 1^{13},13 \\
2^4 \tdot 1^5 & 2^4 \tdot 1^5 \\
2^5 \tdot 1^3 & 2^5 \tdot 1^3 \\
2^6 \tdot 1 & 2^6 \tdot 1 \\
3^3 \tdot 1^4 & 3^3 \tdot 1^4 \\
3^4 \tdot 1 & 3^4 \tdot 1 \\
4^2 \tdot 1^5 & 4^2 \tdot 1^5 \\
4^2 \tdot 2^2 \tdot 1 & 4^2 \tdot 2^2 \tdot 1 \\
4^3 \tdot 1 & 4^3 \tdot 1 \\
5^2 \tdot 1^3 & 5^2 \tdot 1^3 \\
6 \tdot 3 \tdot 2 \tdot 1^2 & 6 \tdot 3 \tdot 2 \tdot 1^2 \\
6 \tdot 3 \tdot 2^2 & 6 \tdot 3 \tdot 2^2 \\
6^2 \tdot 1 & 6^2 \tdot 1 \\
7 \tdot 1^6 & 7 \tdot 1^6 \\
8 \tdot 1^5 & 8 \tdot 1^5 \\
8 \tdot 2^2 \tdot 1 & 8 \tdot 2^2 \tdot 1 \\
8 \tdot 4 \tdot 1 & 8 \tdot 4 \tdot 1 \\
9 \tdot 1^4 & 9 \tdot 1^4 \\
9 \tdot 3 \tdot 1 & 9 \tdot 3 \tdot 1 \\
10 \tdot 1^3 & 10 \tdot 1^3 \\
10 \tdot 2 \tdot 1 & 10 \tdot 2 \tdot 1 \\
11 \tdot 1^2 & 11 \tdot 1^2 \\
12 \tdot 1 & 12 \tdot 1 \\
13 & 13 \\
\hline
\end{array}\verynormalcols

&&

\verynarrowcols\begin{array}{|r|r|}
\hline
\tspacer n=14 & \\
\alpha & \beta\text{ and }\gamma\bspacer\\
\hline\tspacer
1^{14} & 1^{14},2^7,7^2,14 \\
2 \tdot 1^{12} & 2^7,14 \\
2^2 \tdot 1^{10} & 2^7,14 \\
2^3 \tdot 1^8 & 2^7,14 \\
2^4 \tdot 1^6 & 2^4 \tdot 1^6,2^7,14 \\
2^5 \tdot 1^4 & 2^5 \tdot 1^4,2^7,14 \\
2^6 \tdot 1^2 & 2^6 \tdot 1^2,2^7,14 \\
2^7 & (7^2,14) \\
3^3 \tdot 1^5 & 3^3 \tdot 1^5 \\
3^4 \tdot 1^2 & 3^4 \tdot 1^2,6^2 \tdot 2 \\
4^2 \tdot 1^6 & 4^2 \tdot 1^6,4^2 \tdot 2^3 \\
4^2 \tdot 2 \tdot 1^4 & 4^2 \tdot 2^3 \\
4^2 \tdot 2^2 \tdot 1^2 & 4^2 \tdot 2^2 \tdot 1^2,4^2 \tdot 2^3 \\
4^3 \tdot 1^2 & 4^3 \tdot 1^2,4^3 \tdot 2 \\
5^2 \tdot 1^4 & 5^2 \tdot 1^4,10 \tdot 2^2 \\
5^2 \tdot 2 \tdot 1^2 & 10 \tdot 2^2 \\
5^2 \tdot 2^2 & 10 \tdot 2^2 \\
6 \tdot 3 \tdot 2^2 \tdot 1 & 6 \tdot 3 \tdot 2^2 \tdot 1 \\
6 \tdot 3^2 \tdot 1^2 & 6^2 \tdot 2 \\
6^2 \tdot 1^2 & 6^2 \tdot 1^2,6^2 \tdot 2 \\
7 \tdot 1^7 & 7 \tdot 1^7,7^2,14 \\
7 \tdot 2 \tdot 1^5 & 14 \\
7 \tdot 2^2 \tdot 1^3 & 14 \\
7 \tdot 2^3 \tdot 1 & 14 \\
7^2 & 7^2,14 \\
8 \tdot 1^6 & 8 \tdot 1^6,8 \tdot 2^3 \\
8 \tdot 2 \tdot 1^4 & 8 \tdot 2^3 \\
8 \tdot 2^2 \tdot 1^2 & 8 \tdot 2^2 \tdot 1^2,8 \tdot 2^3 \\
8 \tdot 4 \tdot 1^2 & 8 \tdot 4 \tdot 1^2 \\
9 \tdot 1^5 & 9 \tdot 1^5 \\
9 \tdot 3 \tdot 1^2 & 9 \tdot 3 \tdot 1^2 \\
10 \tdot 1^4 & 10 \tdot 1^4,10 \tdot 2^2 \\
10 \tdot 2 \tdot 1^2 & 10 \tdot 2 \tdot 1^2,10 \tdot 2^2 \\
11 \tdot 1^3 & 11 \tdot 1^3 \\
12 \tdot 1^2 & 12 \tdot 1^2,12 \tdot 2 \\
13 \tdot 1 & 13 \tdot 1 \\
\hline
\end{array}\verynormalcols

\end{array}
\]

\[
\begin{array}{ccccc}

\verynarrowcols
\begin{array}{|r|r|}
\hline
\tspacer n=15 & \\
\alpha & \beta\text{ and }\gamma\bspacer\\
\hline\tspacer
1^{15} & 1^{15},3^5,5^3,15 \\
2^4 \tdot 1^7 & 2^4 \tdot 1^7 \\
2^5 \tdot 1^5 & 2^5 \tdot 1^5,10 \tdot 5 \\
2^6 \tdot 1^3 & 2^6 \tdot 1^3,6^2 \tdot 3 \\
2^7 \tdot 1 & 2^7 \tdot 1 \\
3 \tdot 1^{12} & 3^5,15 \\
3 \tdot 2^6 & 6^2 \tdot 3 \\
3^2 \tdot 1^9 & 3^5,15 \\
3^3 \tdot 1^6 & 3^3 \tdot 1^6,3^5,15 \\
3^4 \tdot 1^3 & 3^4 \tdot 1^3,3^5,15 \\
3^5 & 3^5,(5^3,15),15 \\
4^2 \tdot 1^7 & 4^2 \tdot 1^7 \\
4^2 \tdot 2^2 \tdot 1^3 & 4^2 \tdot 2^2 \tdot 1^3 \\
4^2 \tdot 2^3 \tdot 1 & 4^2 \tdot 2^3 \tdot 1 \\
4^3 \tdot 1^3 & 4^3 \tdot 1^3,12 \tdot 3 \\
4^3 \tdot 2 \tdot 1 & 4^3 \tdot 2 \tdot 1 \\
4^3 \tdot 3 & 12 \tdot 3 \\
5 \tdot 1^{10} & 5^3,15 \\
5 \tdot 2^5 & 10 \tdot 5 \\
5 \tdot 3 \tdot 1^7 & 15 \\
5 \tdot 3^2 \tdot 1^4 & 15 \\
5 \tdot 3^3 \tdot 1 & 15 \\
5^2 \tdot 1^5 & 5^2 \tdot 1^5,5^3,15 \\
5^2 \tdot 3 \tdot 1^2 & 15 \\
5^3 & 5^3,15 \\
6 \tdot 2^3 \tdot 1^3 & 6^2 \tdot 3 \\
6 \tdot 3 \tdot 2^2 \tdot 1^2 & 6 \tdot 3 \tdot 2^2 \tdot 1^2 \\
6 \tdot 3 \tdot 2^3 & 6^2 \tdot 3 \\
6 \tdot 3^2 \tdot 2 \tdot 1 & 6 \tdot 3^2 \tdot 2 \tdot 1 \\
6^2 \tdot 1^3 & 6^2 \tdot 1^3,6^2 \tdot 3 \\
6^2 \tdot 2 \tdot 1 & 6^2 \tdot 2 \tdot 1 \\
6^2 \tdot 3 & 6^2 \tdot 3 \\
7^2 \tdot 1 & 7^2 \tdot 1 \\
8 \tdot 1^7 & 8 \tdot 1^7 \\
8 \tdot 2^2 \tdot 1^3 & 8 \tdot 2^2 \tdot 1^3 \\
8 \tdot 2^3 \tdot 1 & 8 \tdot 2^3 \tdot 1 \\
8 \tdot 4 \tdot 1^3 & 8 \tdot 4 \tdot 1^3 \\
8 \tdot 4 \tdot 2 \tdot 1 & 8 \tdot 4 \tdot 2 \tdot 1 \\
9 \tdot 1^6 & 9 \tdot 1^6,9 \tdot 3^2 \\
9 \tdot 3 \tdot 1^3 & 9 \tdot 3 \tdot 1^3,9 \tdot 3^2 \\
9 \tdot 3^2 & 9 \tdot 3^2 \\
10 \tdot 1^5 & 10 \tdot 1^5,10 \tdot 5 \\
10 \tdot 2^2 \tdot 1 & 10 \tdot 2^2 \tdot 1 \\
10 \tdot 5 & 10 \tdot 5 \\
11 \tdot 1^4 & 11 \tdot 1^4 \\
12 \tdot 1^3 & 12 \tdot 1^3,12 \tdot 3 \\
12 \tdot 2 \tdot 1 & 12 \tdot 2 \tdot 1 \\
12 \tdot 3 & 12 \tdot 3 \\
13 \tdot 1^2 & 13 \tdot 1^2 \\
14 \tdot 1 & 14 \tdot 1 \\
15 & 15 \\
\hline
\end{array}\verynormalcols

&&

\verynarrowcols\begin{array}{|r|r|}
\hline
\tspacer n=16 & \\
\alpha & \beta\text{ and }\gamma\bspacer\\
\hline\tspacer
1^{16} & 1^{16},2^8,4^4,8^2,16 \\
2 \tdot 1^{14} & 2^8,4^4,8^2,16 \\
2^2 \tdot 1^{12} & 2^8,4^4,8^2,16 \\
2^3 \tdot 1^{10} & 2^8,4^4,8^2,16 \\
2^4 \tdot 1^8 & 2^4 \tdot 1^8,2^8,4^4,8^2,16 \\
2^5 \tdot 1^6 & 2^5 \tdot 1^6,2^8,4^4,8^2,16 \\
2^6 \tdot 1^4 & 2^6 \tdot 1^4,2^8,4^4,8^2,16 \\
2^7 \tdot 1^2 & 2^7 \tdot 1^2,2^8,4^4,8^2,16 \\
2^8 & 2^8,4^4,8^2,16 \\
3^3 \tdot 1^7 & 3^3 \tdot 1^7 \\
3^4 \tdot 1^4 & 3^4 \tdot 1^4,6^2 \tdot 2^2,12 \tdot 4 \\
3^4 \tdot 2 \tdot 1^2 & 6^2 \tdot 2^2,12 \tdot 4 \\
3^4 \tdot 2^2 & 6^2 \tdot 2^2,12 \tdot 4 \\
3^5 \tdot 1 & 3^5 \tdot 1 \\
4 \tdot 1^{12} & 4^4,8^2,16 \\
4 \tdot 2 \tdot 1^{10} & 4^4,8^2,16 \\
4 \tdot 2^2 \tdot 1^8 & 4^4,8^2,16 \\
4 \tdot 2^3 \tdot 1^6 & 4^4,8^2,16 \\
4 \tdot 2^4 \tdot 1^4 & 4^4,8^2,16 \\
4 \tdot 2^5 \tdot 1^2 & 4^4,8^2,16 \\
4 \tdot 2^6 & 4^4,8^2,16 \\
4^2 \tdot 1^8 & 4^2 \tdot 1^8,4^2 \tdot 2^4,4^4,8^2,16 \\
4^2 \tdot 2 \tdot 1^6 & 4^2 \tdot 2^4,4^4,8^2,16 \\
4^2 \tdot 2^2 \tdot 1^4 & 4^2 \tdot 2^2 \tdot 1^4,4^2 \tdot 2^4,4^4,8^2,16 \\
4^2 \tdot 2^3 \tdot 1^2 & 4^2 \tdot 2^3 \tdot 1^2,4^2 \tdot 2^4,4^4,8^2,16 \\
4^2 \tdot 2^4 & 4^2 \tdot 2^4,4^4,8^2,16 \\
4^3 \tdot 1^4 & 4^3 \tdot 1^4,4^3 \tdot 2^2,4^4,8^2,16 \\
4^3 \tdot 2 \tdot 1^2 & 4^3 \tdot 2 \tdot 1^2,4^3 \tdot 2^2,4^4,8^2,16 \\
4^3 \tdot 2^2 & 4^3 \tdot 2^2,4^4,8^2,16 \\
4^4 & 4^4,8^2,16 \\
5^2 \tdot 1^6 & 5^2 \tdot 1^6,10 \tdot 2^3 \\
5^2 \tdot 2 \tdot 1^4 & 10 \tdot 2^3 \\
5^2 \tdot 2^2 \tdot 1^2 & 10 \tdot 2^3 \\
5^3 \tdot 1 & 5^3 \tdot 1 \\
6 \tdot 3 \tdot 2^2 \tdot 1^3 & 6 \tdot 3 \tdot 2^2 \tdot 1^3 \\
6 \tdot 3 \tdot 2^3 \tdot 1 & 6 \tdot 3 \tdot 2^3 \tdot 1 \\
6 \tdot 3^2 \tdot 1^4 & 6^2 \tdot 2^2,12 \tdot 4 \\
6 \tdot 3^2 \tdot 2 \tdot 1^2 & 6 \tdot 3^2 \tdot 2 \tdot 1^2,6^2 \tdot 2^2,12 \tdot 4 \\
6 \tdot 3^2 \tdot 2^2 & 6 \tdot 3^2 \tdot 2^2,6^2 \tdot 2^2,12 \tdot 4 \\
6^2 \tdot 1^4 & 6^2 \tdot 1^4,6^2 \tdot 2^2,12 \tdot 4 \\
6^2 \tdot 2 \tdot 1^2 & 6^2 \tdot 2 \tdot 1^2,6^2 \tdot 2^2,12 \tdot 4 \\
6^2 \tdot 2^2 & 6^2 \tdot 2^2,12 \tdot 4 \\
6^2 \tdot 3 \tdot 1 & 6^2 \tdot 3 \tdot 1 \\
7^2 \tdot 1^2 & 7^2 \tdot 1^2,14 \tdot 2 \\
8 \tdot 1^8 & 8 \tdot 1^8,8 \tdot 2^4,8 \tdot 4^2,8^2,16 \\
8 \tdot 2 \tdot 1^6 & 8 \tdot 2^4,8 \tdot 4^2,8^2,16 \\
8 \tdot 2^2 \tdot 1^4 & 8 \tdot 2^2 \tdot 1^4,8 \tdot 2^4,8 \tdot 4^2,8^2,16 \\
8 \tdot 2^3 \tdot 1^2 & 8 \tdot 2^3 \tdot 1^2,8 \tdot 2^4,8 \tdot 4^2,8^2,16 \\
8 \tdot 2^4 & 8 \tdot 2^4,8 \tdot 4^2,8^2,16 \\
8 \tdot 4 \tdot 1^4 & 8 \tdot 4 \tdot 1^4,8 \tdot 4 \tdot 2^2,8 \tdot 4^2,8^2,16 \\
8 \tdot 4 \tdot 2 \tdot 1^2 & 8 \tdot 4 \tdot 2 \tdot 1^2,8 \tdot 4 \tdot 2^2,8 \tdot 4^2,8^2,16 \\
8 \tdot 4 \tdot 2^2 & 8 \tdot 4 \tdot 2^2,8 \tdot 4^2,8^2,16 \\
8 \tdot 4^2 & 8 \tdot 4^2,8^2,16 \\
8^2 & 8^2,16 \\
9 \tdot 1^7 & 9 \tdot 1^7 \\
9 \tdot 3^2 \tdot 1 & 9 \tdot 3^2 \tdot 1 \\
10 \tdot 1^6 & 10 \tdot 1^6,10 \tdot 2^3 \\
10 \tdot 2 \tdot 1^4 & 10 \tdot 2^3 \\
10 \tdot 2^2 \tdot 1^2 & 10 \tdot 2^2 \tdot 1^2,10 \tdot 2^3 \\
10 \tdot 5 \tdot 1 & 10 \tdot 5 \tdot 1 \\
11 \tdot 1^5 & 11 \tdot 1^5 \\
12 \tdot 1^4 & 12 \tdot 1^4,12 \tdot 2^2,12 \tdot 4 \\
12 \tdot 2 \tdot 1^2 & 12 \tdot 2 \tdot 1^2,12 \tdot 2^2,12 \tdot 4 \\
12 \tdot 2^2 & 12 \tdot 2^2,12 \tdot 4 \\
12 \tdot 3 \tdot 1 & 12 \tdot 3 \tdot 1 \\
13 \tdot 1^3 & 13 \tdot 1^3 \\
14 \tdot 1^2 & 14 \tdot 1^2,14 \tdot 2 \\
15 \tdot 1 & 15 \tdot 1 \\
\hline
\end{array}
\verynormalcols

&&

\verynarrowcols\begin{array}{|r|r|}
\hline
\tspacer n=17 & \\
\alpha & \beta\text{ and }\gamma\bspacer\\
\hline\tspacer
1^{17} & 1^{17},17 \\
2^5 \tdot 1^7 & 2^5 \tdot 1^7 \\
2^6 \tdot 1^5 & 2^6 \tdot 1^5 \\
2^7 \tdot 1^3 & 2^7 \tdot 1^3 \\
2^8 \tdot 1 & 2^8 \tdot 1 \\
3^3 \tdot 1^8 & 3^3 \tdot 1^8 \\
3^4 \tdot 1^5 & 3^4 \tdot 1^5 \\
3^5 \tdot 1^2 & 3^5 \tdot 1^2 \\
4^3 \tdot 1^5 & 4^3 \tdot 1^5 \\
4^3 \tdot 2^2 \tdot 1 & 4^3 \tdot 2^2 \tdot 1 \\
4^4 \tdot 1 & 4^4 \tdot 1 \\
5^2 \tdot 1^7 & 5^2 \tdot 1^7 \\
5^3 \tdot 1^2 & 5^3 \tdot 1^2 \\
6 \tdot 3 \tdot 2^3 \tdot 1^2 & 6 \tdot 3 \tdot 2^3 \tdot 1^2 \\
6 \tdot 3^2 \tdot 2^2 \tdot 1 & 6 \tdot 3^2 \tdot 2^2 \tdot 1 \\
6^2 \tdot 1^5 & 6^2 \tdot 1^5 \\
6^2 \tdot 2^2 \tdot 1 & 6^2 \tdot 2^2 \tdot 1 \\
6^2 \tdot 3 \tdot 1^2 & 6^2 \tdot 3 \tdot 1^2 \\
7^2 \tdot 1^3 & 7^2 \tdot 1^3 \\
8^2 \tdot 1 & 8^2 \tdot 1 \\
9 \tdot 1^8 & 9 \tdot 1^8 \\
9 \tdot 3^2 \tdot 1^2 & 9 \tdot 3^2 \tdot 1^2 \\
10 \tdot 1^7 & 10 \tdot 1^7 \\
10 \tdot 2^2 \tdot 1^3 & 10 \tdot 2^2 \tdot 1^3 \\
10 \tdot 2^3 \tdot 1 & 10 \tdot 2^3 \tdot 1 \\
10 \tdot 5 \tdot 1^2 & 10 \tdot 5 \tdot 1^2 \\
11 \tdot 1^6 & 11 \tdot 1^6 \\
12 \tdot 1^5 & 12 \tdot 1^5 \\
12 \tdot 2^2 \tdot 1 & 12 \tdot 2^2 \tdot 1 \\
12 \tdot 3 \tdot 1^2 & 12 \tdot 3 \tdot 1^2 \\
12 \tdot 4 \tdot 1 & 12 \tdot 4 \tdot 1 \\
13 \tdot 1^4 & 13 \tdot 1^4 \\
14 \tdot 1^3 & 14 \tdot 1^3 \\
14 \tdot 2 \tdot 1 & 14 \tdot 2 \tdot 1 \\
15 \tdot 1^2 & 15 \tdot 1^2 \\
16 \tdot 1 & 16 \tdot 1 \\
17 & 17 \\
\hline
\end{array}\verynormalcols
\end{array}
\]

\end{document}